\pdfoutput=1 
\documentclass{amsart}

\usepackage{microtype} 

\usepackage[colorlinks, citecolor=violet, linkcolor=blue]{hyperref}

\usepackage{wasysym}
\usepackage{amsmath}
\usepackage{amssymb}
\usepackage{amsthm}
\usepackage{wrapfig}
\usepackage{epsfig}  
\usepackage[margin=1.3in]{geometry}
\usepackage{color}
\input{xy}
\xyoption{poly}
\xyoption{2cell}
\xyoption{all}

\usepackage{tikz,tikz-cd}
\usepackage{adjustbox}
\usetikzlibrary{matrix,arrows,backgrounds,shapes.misc,shapes.geometric,patterns,calc,positioning}
\definecolor{mygreen}{cmyk}{0.5,0,0.5,0.5}
\definecolor{mylightblue}{RGB}{20,90,220}
\definecolor{mypink}{RGB}{200,90,90}
\usetikzlibrary{decorations.markings}
\tikzset{->-/.style={decoration={
  markings,
  mark=at position #1 with {\arrow{stealth}}},postaction={decorate}}}


\newtheorem{thm}{Theorem}[section]
\newtheorem{prop}[thm]{Proposition}
\newtheorem{lemma}[thm]{Lemma}
\newtheorem{corollary}[thm]{Corollary}

\newtheorem{thmIntro}{Theorem}


\theoremstyle{definition}
\newtheorem{definition}[thm]{Definition}


\theoremstyle{remark}
\newtheorem{remark}[thm]{Remark}
\newtheorem{example}[thm]{Example}


\numberwithin{equation}{section}

  
\newcommand{\arxiv}[1]{\href{http://arxiv.org/abs/#1}{\texttt{arXiv:#1}}}
  
\newcommand{\calt}{\mathcal{T}}
\newcommand{\cald}{\mathcal{D}}
\newcommand{\calm}{\mathcal{M}}
\newcommand{\cale}{\mathcal{E}}

\newcommand{\ssi}{\Leftrightarrow}
\newcommand{\ot}{\leftarrow}

\newcommand{\za}{\alpha}
\newcommand{\zb}{\beta}
\newcommand{\zd}{\delta}
\newcommand{\ze}{\epsilon}
\newcommand{\zg}{\gamma}
\newcommand{\zG}{\Gamma}
\newcommand{\zl}{\lambda}
\newcommand{\zs}{\sigma}

\newcommand{\kb}{\Bbbk}

\DeclareMathOperator{\supp}{supp}
\newcommand{\Hom}{\textup{Hom}}
\newcommand{\add}{\textup{add}}
\newcommand{\rep}{\textup{rep}\,}

\newcommand{\Qbar}{\overline{Q}}
\newcommand{\Mbar}{\overline{M}}
\newcommand{\fbar}{\overline{f}}
\newcommand{\Pbar}{\overline{P}}
\newcommand{\Cbar}{\overline{C}}
\newcommand{\Ibar}{\overline{I}}
\newcommand{\caltbar}{\overline{\mathcal{T}}}
\newcommand{\cbar}{\overline{\mathcal{C}}}

\newcommand{\Ext}{\textup{Ext}}
\newcommand{\End}{\textup{End}}

\newcommand{\mar}{\textup{mar}(Q)}
\newcommand{\ga}{\Gamma_{P(Q)}}
\newcommand{\cc}{\mathcal{C}_{P(Q)}}
\newcommand{\ind}{\textup{ind}\,}

\begin{document}

\title{Cambrian combinatorics on quiver representations (type $\mathbb{A}$)}
\author{Emily Barnard}\address{Department of Mathematical Sciences, DePaul University, Chicago, IL 60614-3210, USA}\email{
e.barnard@depaul.edu}
\author{Emily Gunawan}
\address{Department of Mathematics, University of Oklahoma, 
Norman, OK 73019-3103, USA}
\email{egunawan@ou.edu}
\author{Emily Meehan}\address{School of Science, Technology, Accessibility, Mathematics, and Public Health, 
Gallaudet University, 
Washington, DC 20002-3600, USA}\email{emily.meehan@gallaudet.edu}
\author{Ralf Schiffler}\thanks{The fourth author was supported by the NSF grant  DMS-1800860 and by the University of Connecticut.}
\address{Department of Mathematics, University of Connecticut, 
Storrs, CT 06269-1009, USA}
\email{schiffler@math.uconn.edu}

\maketitle
\setcounter{tocdepth}{2}

\begin{abstract}
This paper presents a  geometric model of the Auslander--Reiten quiver of a type $\mathbb{A}$ quiver together with a stability function for which all indecomposable modules are stable.
We also introduce a new Catalan object which we call a maximal almost rigid representation. We show that its endomorphism algebra is a tilted algebra of type $\mathbb{A}$. 
We define a partial order on the set of maximal almost rigid representations and use our new geometric model to show that this partial order is a Cambrian lattice.
\end{abstract}

\tableofcontents
\section{Introduction}

Let $Q$ be a Dynkin quiver of type $\mathbb{A}_{n+2}$. Thus $Q$ is an oriented graph whose underlying unoriented graph is the Dynkin diagram of type $\mathbb{A}_{n+2}$. 
In \cite{Reading}, Reading introduced a family of lattices called Cambrian lattices which depend on a choice of Dynkin diagram together with an orientation of that diagram.
In this paper we focus on the type $\mathbb{A}$ case, and use the orientation given by $Q$.
In this context, Reading defined an $(n+3)$-gon $P(Q)$ and a surjective map 
\[\eta_Q\colon S_{n+1}\to \{\textup{triangulations of $P(Q)$}\}\]
from permutations to triangulations, 
following and modifying an explication~\cite[Sec.~4.3]{Reiner} of an iterated fiber-polytope construction given in~\cite{BS}. 
In the special case where $Q$ is linearly oriented, this map had been 
well-studied as a map to various Catalan objects --- see \cite{Tonks}, \cite[Sec.~9]{BW}, \cite[Sec.~1.5]{Stanley}. 

One of the objectives of this paper is to give a representation theoretic interpretation 
of the map~$\eta_Q$. 
In order to achieve this goal, we develop several other concepts and results along the way.

Inspired by the map $\eta_Q$, we define a category $\cc$ whose  objects are line segments in the polygon $P(Q)$ and whose morphisms are generated by pivots of line segments modulo mesh relations. We show that the category $\cc$ is equivalent to  the abelian category $\ind Q$ of indecomposable representations of the quiver $Q$.   
\begin{thmIntro}[Theorem \ref{thm 1}]\label{thm A}
 There is an equivalence of categories $F\colon\cc\to \ind Q$.
\end{thmIntro}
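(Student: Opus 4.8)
\emph{Proof strategy.} The plan is to recognize both categories as \emph{mesh categories} and to reduce the statement to an isomorphism of translation quivers. On the representation side I would invoke the well-known fact that, for a path algebra of a Dynkin quiver, the category of indecomposables is \emph{standard}: $\ind Q$ is equivalent, as a $k$-linear category, to the mesh category $k(\Gamma_Q)$ of the Auslander--Reiten quiver $\Gamma_Q$ of $kQ$, the translation being the Auslander--Reiten translate $\tau$. On the geometric side, $\cc$ is \emph{by construction} the mesh category $k(\ga)$ of the pivot quiver $\ga$: its vertices are the line segments of $P(Q)$, its arrows are the pivots, and its (partially defined) translation rotates both endpoints of a segment by one step. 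Thus it suffices to produce an isomorphism of translation quivers $\varphi\colon\ga \xrightarrow{\sim} \Gamma_Q$; such a $\varphi$ induces an isomorphism of mesh categories $k(\ga)\xrightarrow{\sim} k(\Gamma_Q)$, and composing it with the equivalence $k(\Gamma_Q)\simeq\ind Q$ produces the desired functor $F$.

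\emph{The bijection on vertices.} First I would make the dictionary between line segments and indecomposable modules explicit. Label the vertices of $P(Q)$ by $0,1,\dots,n+2$ in boundary order, so that the placement of the intermediate vertices above or below the base line records the orientation of $Q$, exactly as in Reading's construction. To a segment with endpoints $i<j$ assign the unique indecomposable $Q$-module supported on the connected subquiver that the segment ``cuts off'' (equivalently, the interval module of the associated positive root). Counting shows this is a bijection, since $P(Q)$ has $\binom{n+3}{2}$ line segments and $\mathbb{A}_{n+2}$ has $\binom{n+3}{2}$ positive roots; under it, the $n+2$ boundary edges forming the zig-zag path from vertex $0$ to vertex $n+2$ correspond to the simple modules.

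\emph{Compatibility with arrows, translation and meshes.} It then remains to check the three conditions making $\varphi$ an isomorphism of translation quivers. First, every pivot of segments corresponds to an irreducible morphism between the two associated indecomposables, and every arrow of $\Gamma_Q$ arises this way; the orientation of $Q$ enters precisely here, since which pivots are admissible (i.e.\ keep the segment inside $P(Q)$) and the direction in which they point is governed by the above/below pattern of the vertices. Second, the pivot translation of a segment is sent to $\tau$ of the corresponding module; in particular the segments that admit no pivot translate correspond to the projective modules, and dually for injectives, so the two partial translations match. Third, for every segment that is not of ``projective type'' the two length-two paths through its pivot predecessors obey, around the mesh, the same relation as the corresponding Auslander--Reiten mesh, so $k(\ga)$ and $k(\Gamma_Q)$ are presented by the same relations. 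Given these three points, $\varphi$ is an isomorphism of translation quivers and the theorem follows.

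\emph{Main obstacle.} The essential work lies in the first two points above: showing, uniformly in the orientation of $Q$, that the shape of $P(Q)$ reproduces exactly the region of $\mathbb{Z}\mathbb{A}_{n+2}$ occupied by $\Gamma_Q$, so that pivots encode irreducible maps and the boundary of $P(Q)$ encodes the projective--injective structure of $kQ$. A more self-contained variant, which bypasses the standardness theorem, is to build $F$ by hand: send each segment to the associated indecomposable and each generating pivot to a chosen irreducible morphism, verify that the mesh relations defining $\cc$ are mapped into the Auslander--Reiten relations so that $F$ is a well-defined functor, note that $F$ is dense and injective on isomorphism classes of objects, and finally prove full faithfulness by comparing $\dim_k\Hom_{\cc}(L,L')$ with $\dim_k\Hom_Q(FL,FL')$, using the combinatorial description of morphisms in $\cc$ together with the known dimension formula for $\Hom$-spaces between type $\mathbb{A}$ modules. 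In either approach, once the orientation bookkeeping of the first two points is settled, the rest is formal.
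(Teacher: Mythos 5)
Your proposal is correct and follows essentially the same route as the paper: the paper likewise treats $\cc$ as (by definition) the mesh category of the pivot translation quiver and $\ind Q$ as the mesh category of $(\zG_{\rep Q},\tau)$, establishes the vertex bijection $\gamma(i,j)\mapsto M(i+1,j)$ via Gabriel's theorem, matches pivots with irreducible morphisms through the hook/cohook description, checks that the rotation $R$ intertwines with $\tau$, and then concludes the equivalence formally from the isomorphism of translation quivers. The only cosmetic slip is calling the segments $\gamma(x-1,x)$ corresponding to the simples ``boundary edges'': they form the zig-zag path through the vertices in numerical order, but those joining an upper-barred to a lower-barred vertex are diagonals of $P(Q)$, not boundary edges.
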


The reader familiar with the construction of the cluster category of type $\mathbb{A}_n$ as the category of diagonals in an $(n+3)$-gon in \cite{CCS} will notice a strong similarity between the two constructions. Note however that our construction here is different in several ways. For one, we include the boundary edges, which was not the case in \cite{CCS}, and secondly, our category is hereditary and contains no cycles.

On the other hand, the result of Theorem~\ref{thm A} is very similar to the construction of the derived category of $ Q$ by Opper, Plamondon and Schroll in \cite{OPS}. In that paper, the authors give a geometric model for the derived category of an arbitrary gentle algebra. Since the path algebra of a quiver $Q$ of type $\mathbb{A}$ is a  special kind of gentle algebra, the category of representations of $ Q$ can be recovered as a heart of the derived category in \cite{OPS}.  However, the two constructions have some fundamental differences. While our polygon $P(Q)$ is   homeomorphic to the surface of \cite{OPS}, our construction is rigid in the sense that it fixes the relative position of each vertex of the polygon in the plane. This rigidity is essential here, since we are using the geometric data of the slope of the line segments in $P(Q)$ to describe the morphisms in the category. 
  
The slope is also the key to our next result proving  that the functor $F$ naturally induces a stability condition on the category of representations of $Q$. First, a  central charge is obtained by considering  each oriented line segment in $P(Q)$ as a complex number. Then the stability function of an indecomposable module is given by the angle of the oriented line segment with the positive real axis. This stability function has the property that every indecomposable representation is stable.
Reineke conjectured in \cite{Reineke} that  every Dynkin  quiver admits  a stability function with that property and Apruzzese and Igusa proved the conjecture in type $\mathbb{A}$ in \cite{AI}.

\begin{thmIntro}[Theorem \ref{thm A1}] \label{thm Astab} 
 Let $Q$ be a Dynkin quiver of type $\mathbb{A}$. Then the function that associates to every indecomposable representation $M$ the angle of the corresponding oriented line segment $F^{{-1}}(M)$ is a stability function for which every indecomposable representation of $Q$ is stable. 
\end{thmIntro}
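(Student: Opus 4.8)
The plan is to recast the statement in King's language of stability with respect to a central charge and to reduce it to two facts about the geometry of $P(Q)$ that the construction of $\cc$ and of $P(Q)$ in the earlier sections is designed to supply. Fix the embedding of $P(Q)$ in $\mathbb{C}$ with vertices $v_0,\dots,v_{n+2}$, and for an indecomposable $M$ let $F^{-1}(M)$ be the segment $[v_i,v_j]$ ($i<j$), oriented from $v_i$ to $v_j$; put $Z(M):=v_j-v_i$, so that $\phi(M):=\arg Z(M)$ is the angle of $F^{-1}(M)$ with the positive real axis. \emph{First}, $Z$ must factor through $K_0(\operatorname{mod}Q)$: the two endpoints of $F^{-1}(M)$ are dictated by $\underline{\dim}M$, and a short exact sequence $0\to A\to B\to C\to 0$ is witnessed in $P(Q)$ by a telescoping of segments, the endpoints of the $F^{-1}$ of the indecomposable summands of $A$ and of $C$ subdividing those of $F^{-1}(B)$; hence $Z(B)=Z(A)+Z(C)$, and $Z$ extends to a group homomorphism $K_0(\operatorname{mod}Q)\to\mathbb{C}$ that is additive on all short exact sequences and on direct sums. \emph{Second}, by construction all segments of $P(Q)$, oriented from lower-indexed to higher-indexed endpoint, lie in one fixed open half-plane bounded by a line through the origin (the consecutive edge-vectors $v_k-v_{k-1}$ do, and every $Z(M)$ is a sum of these); after the harmless rotation that identifies this half-plane with the upper one, $\phi$ takes values in $(0,\pi)$ and King's framework applies, with no change to any comparison of angles. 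Thus $\phi$ is a genuine stability function, and what remains is that every indecomposable is $\phi$-stable.

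So let $M$ be indecomposable, $N\subsetneq M$ a proper nonzero submodule, and $L:=M/N\neq 0$. Additivity of $Z$ on $0\to N\to M\to L\to 0$ gives $Z(M)=Z(N)+Z(L)$, with all three vectors nonzero and lying in a common open half-plane. The elementary planar fact that the sum of two nonzero vectors in a common open half-plane has argument strictly between the two summands' arguments, unless the summands are positively parallel (in which case all three arguments agree), then places $\phi(M)$ between $\phi(N)$ and $\phi(L)$. Two things remain: \emph{non-degeneracy}, that $Z(N)$ and $Z(L)$ are never positively parallel, so the inequality is strict; and \emph{the orientation of the triangle}, that it is the submodule $N$ --- not the quotient $L$ --- whose phase is the smaller, so that $\phi(N)<\phi(M)$. (Asking this for all submodules is the same as asking $\phi(M)<\phi(L)$ for all proper quotients, the dual form of the stability condition.)

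To settle both, I would first reduce to $N$ and $L$ indecomposable. For the orientation point: if every proper indecomposable submodule $N'$ of $M$ satisfies $\phi(N')<\phi(M)$, then for an arbitrary $N=\bigoplus_a N_a$ the phase $\phi(N)$ lies strictly between the least and greatest of the $\phi(N_a)$, hence again $<\phi(M)$; dually for quotients. For an indecomposable submodule $N\hookrightarrow M$, I would appeal to the description --- set up in the sections preceding the theorem --- of submodules and quotients of an indecomposable module as pivots of its segment along the boundary arc of $P(Q)$ singled out by the orientation of $Q$; from the placement of the $v_k$ one reads off that such pivots strictly decrease the slope (dually, a proper quotient strictly increases it), which yields both $\phi(N)<\phi(M)$ and, in the triangle on the endpoints of $F^{-1}(M),F^{-1}(N),F^{-1}(L)$, the non-parallelism of the two sides. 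I expect the main obstacle to be exactly this last bundle of assertions --- that passing to a submodule of an indecomposable strictly lowers the slope in $P(Q)$, uniformly over all Dynkin orientations --- which is a statement purely about how $\cc$ and the vertex positions of $P(Q)$ are built from $Q$; modulo the earlier machinery it should come down to tracking the effect of a single pivot, but it is where the rigidity of the model emphasized after Theorem~\ref{thm A} is genuinely needed, since a careless placement of the vertices would already make some indecomposable fail to be stable.
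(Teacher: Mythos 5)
Your proposal follows essentially the same route as the paper: the central charge is read off from the embedding of $P(Q)$ (the paper defines $Z$ on simples by $Z([S(x)])=\vec{\gamma}(x-1,x)$ and obtains $Z([M(j,k)])=\vec{\gamma}(j-1,k)$ by exactly the telescoping you describe), the half-plane condition comes for free from the left-to-right placement of the vertices (the paper works in the strict right half-plane, so no rotation is needed), and stability reduces to the fact that morphisms strictly increase the angle. The one point you defer as ``the main obstacle'' is precisely what the paper supplies: by Theorem~\ref{thm 1} a nonzero morphism between indecomposables is realized by a sequence of pivots, each moving one endpoint of the oriented segment to its counterclockwise neighbour, and convexity of $P(Q)$ forces each such pivot to strictly increase the angle --- this is the argument in the proof of Lemma~\ref{lem 7}. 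The paper then applies this to the inclusion $L\hookrightarrow M$ (on each indecomposable summand of $L$) to get $\phi(L)<\phi(M)$ outright; this direct use of $\Hom(L,M)\neq 0$ makes your detour through the triangle $Z(M)=Z(N)+Z(L)$, the non-parallelism worry, and the question of which side of $\phi(M)$ the submodule sits on all unnecessary. One small slip in your reduction step: $\phi\bigl(\bigoplus_a N_a\bigr)$ lies in the \emph{closed} interval between the extreme phases of the summands (they may all coincide), which is still enough to conclude $\phi(N)<\phi(M)$.
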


 Using the stability function, we extend our model to a model of the derived category of $\rep Q$ recovering the (special case of) the result in \cite{OPS}.

We then introduce a class of representations which we call \emph{maximal almost rigid} representations. 
We say that the direct sum of two indecomposable representations $M$ and $N$ is \emph{almost rigid} if they do not have any nonsplit extensions, or if all extensions between $M$ and $N$ are indecomposable. 
We say that a  representation $T$  is \emph{maximal almost rigid} if,  for every nonzero representation $M$, the representation $T\oplus M$ is not almost rigid. Let $\mar$ denote the set of all maximal almost rigid representations of $Q$. 

Our third main result is the following.

\begin{thmIntro}[Theorem \ref{thm 2}]\label{thm B} The functor $F$ induces a bijection 
 \[F\colon\{\textup{triangulations of $P(Q)$}\} \to \mar. 
 \]
\end{thmIntro}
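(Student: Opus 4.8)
The plan is to reduce the statement to a single ``dictionary lemma'' relating crossings of arcs in $P(Q)$ to almost rigidity of the corresponding modules, and then to invoke the classical description of triangulations as the maximal families of pairwise non-crossing arcs.

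First I would record the bookkeeping. By Theorem~\ref{thm A} the functor $F$ restricts to a bijection from the set of line segments of $P(Q)$ (boundary edges included) onto the set of isomorphism classes of indecomposable representations of $Q$; both sets have cardinality $\binom{n+3}{2}$. For a triangulation $\Delta$ of $P(Q)$, set $F(\Delta):=\bigoplus_{s}F(s)$, the sum ranging over the $2n+3$ segments $s$ of $\Delta$ (its $n+3$ boundary edges and $n$ diagonals). This map is well defined, and it is injective: distinct triangulations are distinct sets of segments, hence are sent by the injective object map of $F$ to distinct sets of indecomposables, hence by Krull--Schmidt to non-isomorphic representations. So the whole content is to identify the image with $\mar$.

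The heart of the proof is the dictionary lemma: for two distinct segments $s_1,s_2$ of $P(Q)$, the representation $F(s_1)\oplus F(s_2)$ is almost rigid if and only if $s_1$ and $s_2$ do not cross in the interior of $P(Q)$. To prove it I would use that in type $\mathbb{A}$, between two non-isomorphic indecomposables at most one of the two $\Ext^1$-spaces is nonzero and it is at most one-dimensional, so ``$F(s_1)\oplus F(s_2)$ is not almost rigid'' means precisely that the unique nonsplit extension between $F(s_1)$ and $F(s_2)$ has a \emph{decomposable} middle term. I would then compute that extension geometrically: via Theorem~\ref{thm A} together with the description of short exact sequences in the geometric model established above, the middle term can be read off from the relative cyclic position of the four endpoints of $s_1$ and $s_2$. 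Splitting into the exhaustive cases --- $s_1$ and $s_2$ share an endpoint; one lies in the region cut off by the other; they are disjoint and lie on the same side of each other; they cross --- one checks that in the first three cases every nonsplit extension between $F(s_1)$ and $F(s_2)$ has indecomposable (again line-segment) middle term, whereas in the crossing case the quadrilateral spanned by the four endpoints admits a diagonal flip, and the two resulting pairs of non-crossing arcs are the summands of the middle term of a nonsplit extension, which is therefore decomposable. As an independent check one can argue inside $\rep Q$: indecomposables are interval modules and $F$ sends a chord to the interval module on the vertices it cuts off, under which the four geometric cases become ``adjacent or sharing an endpoint'', ``strictly nested'', ``separated by a gap'', and ``properly overlapping'' for the intervals, and a short Euler-form computation isolates the overlapping case as the only one admitting an overlap extension of the form $0\to M_{[c,d]}\to M_{[a,d]}\oplus M_{[c,b]}\to M_{[a,b]}\to 0$. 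This lemma is the main obstacle; the delicate point within it is precisely that the middle term remains indecomposable in each of the three non-crossing cases for \emph{every} orientation of $Q$, and it is the rigidity of the model from Theorem~\ref{thm A} that makes this uniform.

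Granting the lemma, the rest is formal. Any maximal family of pairwise non-crossing segments of $P(Q)$ contains all boundary edges (these cross nothing) together with a maximal family of pairwise non-crossing diagonals, and is therefore exactly the segment set of a triangulation; conversely the segments of a triangulation form such a family. By the dictionary lemma the object bijection of $F$ carries these families onto the maximal families of pairwise almost rigid indecomposables, i.e.\ onto the sets of indecomposable summands of maximal almost rigid representations. This gives surjectivity of $\Delta\mapsto F(\Delta)$ onto $\mar$: if $T\in\mar$ has indecomposable summands $T_1,\dots,T_m$ and $s_i$ denotes the segment with $F(s_i)\cong T_i$, then $\{s_1,\dots,s_m\}$ is pairwise non-crossing by the lemma and maximal such --- otherwise adjoining a missing non-crossing segment would, again by the lemma, enlarge $T$ to a strictly larger almost rigid representation, contradicting maximality --- so it is the segment set of a triangulation $\Delta$ with $F(\Delta)=T$. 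Together with the injectivity noted above, this proves that $F$ induces a bijection $\{\text{triangulations of }P(Q)\}\to\mar$.
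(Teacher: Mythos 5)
Your proposal is correct and follows essentially the same route as the paper: the core of both arguments is the dictionary that a pair of indecomposables is almost rigid if and only if the corresponding line segments do not cross, established by identifying nonsplit extensions with decomposable middle term (the paper invokes Proposition~\ref{prop:ses}(b) together with Theorem~\ref{thm 1}, where the decomposable middle term is read off as the pair $\zg(a,d)$, $\zg(c,b)$ from the crossing of $\zg(a,b)$ and $\zg(c,d)$, exactly your overlap-extension computation), after which maximal almost rigid representations correspond to maximal noncrossing collections, i.e.\ triangulations. The only difference is presentational: you re-derive the crossing criterion by an explicit case analysis of endpoint configurations, whereas the paper cites its already-proved classification of short exact sequences in type $\mathbb{A}$.
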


By definition the notion of an almost rigid representation is a generalization of that of a \emph{rigid} representation. For quivers without oriented cycles, the \emph{maximal rigid representations} are precisely the \emph{tilting modules} over the path algebra of the quiver, and their endomorphism algebras are the \emph{tilted algebras} introduced in \cite{HR}.
On the other hand, the endomorphism algebras of triangulations in cluster categories from marked surfaces are cluster-tilted algebras, or more generally,  2-Calabi-Yau tilted algebras, \cite{CCS,BMR,S2,ABCP, L}.
It is shown in \cite{ABS} that if $C$ is a tilted algebra, then its trivial extension $C\ltimes\Ext^2_C(DC,C)$ is cluster-tilted, and every cluster-tilted algebra arises this way.

 Both tilted and cluster-tilted algebras are extensively studied and occupy a central place in the representation theory of finite dimensional algebras.
 It is therefore natural to consider the endomorphism algebras of maximal almost rigid representations. Our fourth main result is the following.
 
\begin{thmIntro}[Theorem \ref{thm 3}]\label{thm C} Let $Q$ be a Dynkin quiver of type $\mathbb{A}_{n+2}$ and $T$ a  maximal almost rigid representation of $Q$. 
Then the endomorphism algebra
 $C=\End_{\rep Q} T$ is a tilted algebra of type $\mathbb{A}_{2n+3}$. 
 Moreover, the quiver of its associated  cluster-tilted algebra 
  $C\ltimes\Ext^2_C(DC,C)$ is the adjacency quiver of the triangulation $F^{-1}(T)$. 
\end{thmIntro}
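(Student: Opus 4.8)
The plan is to compute the quiver with relations of $C$ directly from the triangulation $F^{-1}(T)$ via the pivot description of $\cc$, to recognise the result as an admissible cut of a cluster-tilted algebra of type $\mathbb{A}_{2n+3}$, and then to deduce both assertions of the theorem simultaneously from the relation-extension theory of Assem--Br\"ustle--Schiffler \cite{ABS}.

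Set $\tau = F^{-1}(T)$. By Theorem~\ref{thm B} this is a triangulation of the $(n+3)$-gon $P(Q)$, and since such a triangulation has $n$ diagonals and $n+3$ boundary edges, its line segments --- equivalently, by Theorem~\ref{thm A}, the indecomposable summands of $T$ --- number $2n+3$. Hence $C = \End_{\rep Q} T$ has $2n+3$ isomorphism classes of simple modules, the rank predicted by the theorem. Via Theorem~\ref{thm A}, $C$ is the endomorphism ring in $\cc$ of the direct sum of the segments of $\tau$, so its quiver $Q_C$ and a minimal system of relations are governed by pivots of these segments modulo mesh relations.

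The technical core is to extract the bound quiver $(Q_C, I)$ from this. I would show that the arrows of $Q_C$ are the elementary pivots between segments of $\tau$ that share a triangle; that $Q_C$ is connected, has at most two arrows into and at most two arrows out of each vertex, and has no oriented cycles; that a minimal set of relations consists of $n+1$ paths of length two, one per triangle of $\tau$; and that $Q_C$ therefore has $2n+2$ arrows and is a tree on its $2n+3$ vertices. Moreover, restoring for each triangle of $\tau$ the single additional arrow that completes its two surviving pivot-arrows to an oriented $3$-cycle yields exactly the adjacency quiver of $\tau$ together with all the $2$-paths through its $3$-cycles as relations; equivalently, $C$ is an admissible cut --- in the sense of \cite{ABS} --- of the bound quiver algebra $\widetilde C$ whose quiver is the adjacency quiver of $\tau$. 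Carrying out this identification, in particular checking that every morphism among the summands of $T$ is a composite of elementary pivots, that the relations have length two, and that precisely one arrow per triangle is omitted, is the step I expect to be the main obstacle; everything after it is citation.

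To finish, note that the quiver of $\widetilde C$ has $2n+3$ vertices, is connected, and is assembled from single arrows and oriented $3$-cycles glued at vertices along a tree, with all relations the $2$-paths through the $3$-cycles; by the classification of cluster-tilted algebras of Dynkin type $\mathbb{A}$ (\cite{CCS,BMR,ABCP}), $\widetilde C$ is precisely a cluster-tilted algebra of type $\mathbb{A}_{2n+3}$. By \cite{ABS} and the ensuing theory of admissible cuts, any admissible cut of a cluster-tilted algebra of type $\mathbb{A}$ is a tilted algebra of the same Dynkin type whose relation-extension is that cluster-tilted algebra. Applying this to the cut $C$ of $\widetilde C$ constructed above shows at once that $C$ is a tilted algebra of type $\mathbb{A}_{2n+3}$ and that $C \ltimes \Ext^2_C(DC,C) \cong \widetilde C$, whose quiver is the adjacency quiver of $F^{-1}(T)$, as required. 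Should one wish to avoid the general statement on admissible cuts, the first assertion can instead be obtained by exhibiting directly a complete slice in $\mathrm{mod}\,C$ --- built from a fan of consecutive line segments of $P(Q)$, again read off from the geometric model --- and verifying that its slice algebra is the path algebra of an orientation of $\mathbb{A}_{2n+3}$.
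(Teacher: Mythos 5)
Your reduction to the combinatorics of the cut quiver is reasonable up to the last step, but the last step rests on a statement that is false: it is \emph{not} true that every admissible cut of a cluster-tilted algebra of type $\mathbb{A}$ is a tilted algebra of that type. What the relation-extension/admissible-cut theory actually gives is that such a cut is \emph{iterated} tilted of type $\mathbb{A}$ with global dimension at most $2$; only the cuts corresponding to local slices are genuinely tilted. Concretely, take the cluster-tilted algebra of type $\mathbb{A}_5$ whose quiver is two oriented $3$-cycles glued at a vertex (it arises from an octagon triangulation with two internal triangles sharing a diagonal). The admissible cut $1\to 2\to 3\to 4\to 5$ with relations $\alpha\beta=0$ and $\gamma\delta=0$ is a gentle tree algebra of global dimension $2$ whose relation-extension is that cluster-tilted algebra, yet it has no complete slice: in its Auslander--Reiten quiver the projective-injectives $[1,2]$ and $[4,5]$ are each adjacent only to simples, all five simples lie in a single $\tau$-orbit, and a connected section would have to contain two of them. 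So this cut is iterated tilted but not tilted. Hence your argument, as written, only proves that $C$ is iterated tilted of type $\mathbb{A}_{2n+3}$; to conclude tiltedness you must additionally show that the \emph{specific} cut produced by the pivot orientation (deleting, in each triangle $i<j<k$, the arrow joining $\gamma(i,j)$ and $\gamma(j,k)$ through the middle vertex) never creates a ``double-zero'' configuration, i.e.\ never has two length-two relations traversed consecutively in the same direction along a walk. That verification, or equivalently the direct construction of a complete slice that you mention only as an afterthought, is the real content of the theorem and is missing.

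The paper avoids this issue entirely by arguing in the opposite direction. It subdivides $Q$ to get a quiver $\Qbar$ of type $\mathbb{A}_{2n+3}$, constructs a fully faithful functor $G\colon\rep Q\to\rep\Qbar$ with $\End_{\rep Q}T\cong\End_{\rep\Qbar}G(T)$, and shows via the diagonal model of the cluster category of $\Qbar$ (embedding $P(Q)$ into a $(2n+6)$-gon so that boundary edges become diagonals) that $G(T)$ lifts a cluster-tilting object and is therefore an honest tilting module over $\kb\Qbar$. Tiltedness is then immediate, and the Assem--Br\"ustle--Schiffler trivial-extension theorem is applied in its valid direction (tilted implies the relation-extension is cluster-tilted) to identify the quiver of $C\ltimes\Ext^2_C(DC,C)$ with the adjacency quiver of the triangulation. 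If you want to salvage your route, you should either prove the no-double-zero property of your cut combinatorially from the triangulation, or carry out your proposed construction of a complete slice from a fan of line segments; as it stands the proof does not establish the first assertion of the theorem.
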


We then come back to our original goal to give a representation theoretic interpretation of Cambrian lattices and the map $\eta_Q$.
In general, a Cambrian lattice is a certain quotient of the weak order on a finite Coxeter group. The Tamari lattice is an example in type $\mathbb{A}$. The Cambrian lattice for type $\mathbb{A}$ is a partial order on triangulations 
of a polygon whose cover relations are given by diagonal flips. 
The Hasse diagram for each Cambrian lattice is isomorphic to the 1-skeleton of the generalized associahedron for the corresponding Coxeter group, see~\cite{RS}. 
Furthermore, the elements of each Cambrian lattice of a finite Coxeter group are in bijection with several families of objects related to cluster algebras.
For example, the clusters of the corresponding cluster algebra of finite type, the (isoclasses of) cluster-tilting objects in the corresponding cluster category,
the finitely generated torsion classes over the path algebra of the Dynkin quiver, as well as the non-crossing partitions associated with the quiver, see~\cite{IT}. 

We add a new item to this list by proving the following.
\begin{thmIntro}[Theorem \ref{thm 4}]\label{thm D}
If $Q$ is a Dynkin quiver of type $\mathbb{A}$, 
 the set of maximal almost rigid representations of $Q$ 
 with the covering relation given in Definition~\ref{def poset} is isomorphic to the Cambrian lattice coming from $Q$.
\end{thmIntro}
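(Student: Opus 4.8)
The plan is to use the bijection $F\colon\{\textup{triangulations of }P(Q)\}\to\mar$ of Theorem~\ref{thm B} as the claimed isomorphism and to check that it is an isomorphism of posets, where triangulations carry the Cambrian order of $Q$ and $\mar$ carries the order generated by the covering relation of Definition~\ref{def poset}. As both sides are finite, it suffices to show that $F$ matches the two Hasse diagrams: that $T\lessdot T'$ in $(\mar,\le)$ if and only if $F^{-1}(T)$ and $F^{-1}(T')$ differ by the oriented diagonal flip defining a Cambrian cover. (Recall from the introduction that in type $\mathbb{A}$ the Cambrian cover relations are exactly oriented diagonal flips in $P(Q)$.) This breaks into two parts: (a) identify the covering relation of Definition~\ref{def poset}, transported along $F$, with single diagonal flips; and (b) check that the direction of the flip agrees with Reading's Cambrian rule.

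For (a): by Theorem~\ref{thm A} and Theorem~\ref{thm B}, $F^{-1}$ identifies a maximal almost rigid representation with a triangulation of the $(n+3)$-gon $P(Q)$ regarded as its full set of $2n+3$ line segments --- the $n$ interior diagonals together with the $n+3$ boundary edges, consistently with the type $\mathbb{A}_{2n+3}$ of the endomorphism algebra in Theorem~\ref{thm C} --- where the $n+3$ boundary edges are common to every maximal almost rigid representation. By inspection of Definition~\ref{def poset}, a cover in $(\mar,\le)$ exchanges exactly one non-boundary indecomposable summand, which on the polygon side is precisely a flip of a single interior diagonal inside the quadrilateral it cuts in two; conversely, each such flip yields a triangulation whose $F$-image is comparable to $T$. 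Hence $F$ identifies the undirected Hasse graphs of the two posets.

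Part (b) is the crux. The covering relation of Definition~\ref{def poset} is expressed representation-theoretically --- the forward exchange of a summand $X$ for a summand $Y$, governed by the direction of the short exact sequences relating $X$ and $Y$ --- whereas the Cambrian order orients the flip at a diagonal $d$ according to the local shape of the quadrilateral $Q_d$ relative to the zigzag in $P(Q)$ that encodes $Q$, equivalently according to whether the corresponding step in the weak order under $\eta_Q$ is ascending. I would reconcile the two by a local case analysis of $Q_d$: its two triangles, the adjacent boundary arcs, and its position with respect to the $\SW$/$\NE$ structure of $P(Q)$; in each case one compares the slopes of the old and new diagonals in the geometric model and verifies that the sign of the slope change equals the Cambrian direction. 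This exploits the slope data underlying the stability function of Theorem~\ref{thm Astab}: the slope (or its angle) is a strictly monotone real invariant along covers, which simultaneously shows the relation of Definition~\ref{def poset} is acyclic --- hence a genuine partial order --- and pins down its orientation. I expect this case analysis, together with the translation between ``direction of a short exact sequence'' and ``which way the slope moves'', to be the main obstacle; the rest is bookkeeping resting on Theorems~\ref{thm A}--\ref{thm C}.

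Once (a) and (b) hold, $F$ is an isomorphism of Hasse diagrams, hence of posets; and since Reading proved the Cambrian structure is a lattice, $(\mar,\le)$ is a lattice isomorphic to it, which is the assertion. As an alternative to (b), one could instead give a representation-theoretic description of $\eta_Q$ through the geometric model, deduce that the composite $S_{n+1}\xrightarrow{\eta_Q}\{\textup{triangulations of }P(Q)\}\xrightarrow{F}\mar$ has fibers equal to the Cambrian congruence classes, and identify $(\mar,\le)$ with the quotient of the weak order; but the flip comparison above stays entirely within the geometry of $P(Q)$.
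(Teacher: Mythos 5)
Your proposal is correct and follows essentially the same route as the paper: both use the bijection $F$ of Theorem~\ref{thm 2} and reduce the problem to showing that a cover in $(\mar,\le)$ (exchange of one summand governed by a non-split sequence with decomposable middle term) corresponds exactly to Reading's slope-oriented diagonal flip. The ``crux'' you isolate in part (b) is precisely the content of the paper's Lemma~\ref{lem 7}, which, instead of a case analysis of quadrilaterals, proves the three-way equivalence $\Hom(M_1,M_2)\ne 0$ $\Leftrightarrow$ $E$ decomposable $\Leftrightarrow$ the diagonals cross with the slope increasing, using that nonzero morphisms are compositions of counterclockwise pivots, each of which strictly increases the slope.
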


We also introduce the representation theoretic map $\eta_Q^{\rep}$ as the composition $F\circ \eta_Q$ of the map $\eta_Q$ with our functor $F$. We show that $\eta_Q^{\rep}$ is described via degenerations and extensions of representations of $Q$ of dimension vector $(1,1\ldots,1)$.

The paper is organized as follows. We start by recalling the construction of the polygon~$P(Q)$ and the map $\eta_Q$ in section~\ref{sect 2}.  In section \ref{sect 3}, we review several facts on representations of quivers of type~$\mathbb{A}$.
Section \ref{sect 4} is devoted to the construction of the  category $\cc$ of line segments in~$P(Q)$ and the proof of Theorem \ref{thm  A}. 
We introduce the stability function in section~\ref{sect stab} and prove Theorem~\ref{thm Astab}. In the same section, we extend our model to the derived category.
In section~\ref{sect 5}, we define maximal almost rigid representations and prove Theorem~\ref{thm B}, and in section~\ref{sect 6}, we study their endomorphism algebras and prove Theorem~\ref{thm C}. 
In the last section, we give our representation theoretic interpretations of the Cambrian lattice and the $\eta_Q$ map and prove Theorem~\ref{thm D}. We also produce the interpretation of the $\eta_Q$ map via degenerations and extensions in that section.
\section{Construction of the polygon $P(Q)$ and the map $\eta_Q$} \label{sect 2}

We recall the construction of the  $(n+3)$-gon $P(Q)$ from~\cite{Reading}, 
where $Q$ is a Dynkin quiver of type $\mathbb{A}_{n+2}$. 
Label the vertices $1,2,\ldots, n+2$ in linear order as in  Figure~\ref{fig:polygon6}(left). 
Let $[n+1]$ denote the set $\{1,2,\ldots,n+1\}$ and let $S_{n+1}$ be the symmetric group.

First, use $Q$ to partition the set $[n+1]$ into two sets, the \emph{upper-barred} integers $\overline{[n+1]}$ and the \emph{lower-barred} integers $\underline{[n+1]}$, as follows. 
Let $\overline{[n+1]}$ be the set of all vertices $i$ such that $i\to (i+1)$ is in $Q$
and
let $\underline{[n+1]}$ be the set of all vertices $i$ such that $i\ot (i+1)$ is in $Q$.

Next, associate to $Q$ an $(n+3)$-gon $P(Q)$ with vertex labels $0,1,2,\dots,n+2$. Draw the vertices $0,1,\dots, n+2$ in order from left to right so that: (1) the vertices $0$ and $n+2$ are placed on the same horizontal line $L$; (2)  the upper-barred vertices are placed above $L$; (3) the lower-barred vertices are placed below $L$. See Figure~\ref{fig:polygon6}.

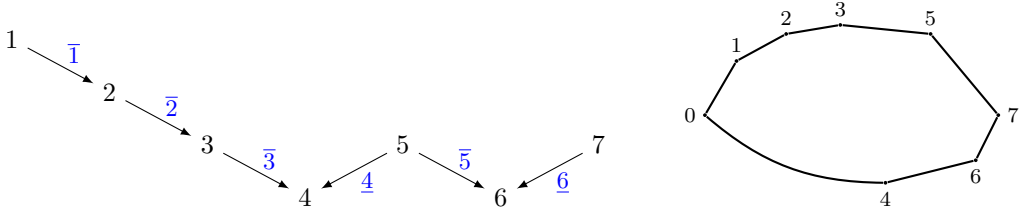
\begin{figure}
\begin{tikzpicture}[xscale=1.3,yscale=0.7,>=latex]
\def\posetedgecolor{blue}
\node(1) at (0,3) {{1}}; 
\node(2) at (1,2) {{2}}; 
\node(3) at (2,1) {{3}};  
\node(4) at (3,0) {{4}};  
\node(5) at (4,1) {{5}};   
\node(6) at (5,0) {{6}}; 
\node(7) at (6,1) {{7}}; 

\draw[->] (1) -- (2) node[\posetedgecolor,pos=0.7,above] {\small $\overline{1}$};

\draw[->]  (2) -- (3) node[\posetedgecolor,pos=0.7,above] {\small $\overline{2}$};

\draw[->] (3) -- (4) node[\posetedgecolor,pos=0.7,above] {\small $\overline{3}$}; 

\draw[<-] (4) -- (5) node[\posetedgecolor,pos=0.7,below] {\small $\underline{4}$}; 

\draw[->] (5) -- (6) node[\posetedgecolor,pos=0.7,above] {\small $\overline{5}$}; 

\draw[<-] (6) -- (7) node[\posetedgecolor,pos=0.7,below] {\small $\underline{6}$}; 
\end{tikzpicture}
\def\myxscale{0.6}
\def\myyscale{0.6}
\def\myedgesize{thick}
\begin{tikzpicture}[xscale=\myxscale,yscale=\myyscale]
\tikzstyle{every node}=[font=\footnotesize]
\node (0) at (0,0) [fill,circle,inner sep=0.5] {};
		\node (1) at (0.7,1.2) [opacity=1,fill,circle,inner sep=0.5] {};
		\node (2) at (1.8,1.8) [fill,circle,inner sep=0.5] {};
		\node (3) at (3,2) [fill,circle,inner sep=0.5]{}; 
		\node (4) at (4,-1.5) [fill,circle,inner sep=0.5] {};
		\node (5) at (5,1.8) [fill,circle,inner sep=0.5] {};
		\node (6) at (6,-1) [fill,circle,inner sep=0.5] {};
		\node (7) at (6.5,0) [fill,circle,inner sep=0.5] {};
		
		\draw (0) node[left] {$0$};
		\draw (1) node[above] {$1$};
		\draw (2) node[above] {$2$};
		\draw (3) node[above] {$3$};
		\draw (4) node[below] {$4$};
		\draw (5) node[above] {$5$};
		\draw (6) node[below] {$6$};
		\draw (7) node[right] {$7$};

\draw[\myedgesize] (0) to [bend right=20] (4);		
\draw[\myedgesize] (4) -- (6); 
\draw[\myedgesize] (6) -- (7);
\draw[\myedgesize] (0) -- (1) -- (2) -- (3) -- (5) -- (7);
\end{tikzpicture}

\caption{
Quiver $Q$ with $\underline{[6]} = \{ 4,6\}$
and  
$\overline{[6]} = \{ 1,2,3,5\}$	and 
the polygon $P(Q)$.}
\label{fig:polygon6}
\end{figure}
 
Given a permutation $\pi\in S_{n+1}$, we write $\pi$ in one-line notation as $\pi_1 \pi_2 \ldots \pi_{n+1}$, where $\pi_i=\pi(i)$ for $i\in [n+1]$. 
For each $i\in\{0,\dots,n+1\}$, define $\lambda_i(\pi)$ to be a path from the left-most vertex $0$ to the right-most vertex $n+2$ as follows. 
Let $\lambda_0(\pi)$ be the path from the vertex $0$ to the vertex $n+2$ passing through all lower-barred vertices   $\underline{i}\in\underline{[n+1]}$ in numerical order.  
Thus $\lambda_0(\pi)$ is the path along the lower boundary edges of $P(Q)$. 
Define $\lambda_1(\pi)$ as the piecewise linear path from $0$ to $n+2$ passing through the vertices
\[\left\{\begin{array}{ll} \underline{[n+1]}\cup \{\pi_1\},\quad &\textup{if $\pi_1\in\overline{[n+1]}$;}\\[5pt]
\underline{[n+1]}\setminus\{\pi_1\}, &\textup{if $\pi_1\in\underline{[n+1]}$,}\\
\end{array}\right.\] 
maintaining the numerical order of the vertices visited.  
Repeating this process recursively, the final path $\lambda_{n+1}(\pi)$ passes from $0$ to $n+2$ through all upper-barred vertices $\overline{i} \in \overline{[n+1]}$. 
Thus $\lambda_{n+1}(\pi)$ is the path along the upper boundary edges of $P(Q)$.

\begin{definition}\cite{Reading}
\label{def etaR} Define a map $\eta_Q\colon S_{n+1}\to \{\textup{triangulations of $P(Q)$}\}$, $\pi\mapsto \eta_Q(\pi)$, where $\eta_Q(\pi)$ is the triangulation  (including the boundary edges) of $P(Q)$ that arises as the union of the paths $\lambda_0(\pi), \dots, \lambda_{n+1}(\pi)$.  
\end{definition}
\begin{remark}
 It is  shown in \cite{Reading} that $\eta_Q$ is surjective and that its fibers correspond to the congruence classes of a certain lattice congruence on the weak order on the symmetric group. The induced poset structure is a lattice called a \emph{Cambrian lattice} of type $\mathbb{A}$.
 More precisely, two triangulations are ordered $\calt\le \calt'$ if there exist permutations $\pi\le \pi'$ in the weak order such that $\eta_Q(\pi) = \calt$ and $\eta_Q(\pi')=\calt'$.
\end{remark}
\begin{example}
\label{exam:eta}
Let $Q$ be the quiver in Figure~\ref{fig:polygon6}, where $n=5$. 
Then $\overline{[6]}=\{\overline{1},\overline{2},\overline{3},\overline{5}\}$ and $\underline{[6]}=\{\underline{4},\underline{6}\}$. 
Let $\pi=
4 \, 5 \, 3 \, 1 \, 2 \, 6
\in S_{6}$, written in one-line notation. 

Then the paths $\lambda_i(\pi)$ described above are as follows. 

\begin{align*}
     \lambda_0(\pi) &= 0,\underline{4},\underline{6},7 
     \\
     \lambda_1(\pi) &= 0,\underline{6},7  && \text{delete $\pi(1)=\underline{4}$ from $\lambda_0(\pi)$} 
     \\
     \lambda_2(\pi) &= 0,\overline{5},\underline{6},7 && \text{add $\pi(2)=\overline{5}$ to  $\lambda_1(\pi)$} 
     \\
     \lambda_3(\pi) &= 0,\overline{3},\overline{5},\underline{6},7 && \text{add $\pi(3)=\overline{3}$ to  $\lambda_2(\pi)$} 
     \\
     \lambda_4(\pi) &= 0,\overline{1},\overline{3},\overline{5},\underline{6},7 && \text{add $\pi(4)=\overline{1}$ to   $\lambda_3(\pi)$}
     \\
     \lambda_5(\pi) &= 0,\overline{1},\overline{2},\overline{3},\overline{5},\underline{6},7 && \text{add  $\pi(5)=\overline{2}$ to  $\lambda_4(\pi)$} 
     \\
     \lambda_6(\pi) &= 0,\overline{1},\overline{2},\overline{3},\overline{5},7 && \text{delete $\pi(6)=\underline{6}$ from   $\lambda_5(\pi)$} 
\end{align*} 
The triangulation $\calt=\eta_Q(\pi)$ is given in Figure~\ref{fig:example_triangulation}. Note that the fiber of $\calt$ is 
\begin{equation*}
  \eta_Q^{-1}(\calt) = 
  \left\{  
453126, 
453162, 
453612, 
456312  
  \right\}.
\end{equation*}
\end{example}

\begin{figure}
\definecolor{mygreen}{cmyk}{0.5,0,0.5,0.5}
\definecolor{mylightblue}{cmyk}{0.24,0.06,0,0.19}
\def\myxscale{0.5}
\def\myyscale{0.6}

\def\myboundaryedge{blue}
\def\myedgesize{ultra thick}
\tikzset{->-/.style={decoration={
			markings,
			mark=at position #1 with {\arrow{stealth}}},postaction={decorate}}}

\begin{tikzpicture}[xscale=\myxscale,yscale=\myyscale]
\tikzstyle{every node}=[font=\footnotesize]
\node (0) at (0,0) [fill,circle,inner sep=0.5] {};
		\node (1) at (0.7,1.2) [opacity=1,fill,circle,inner sep=0.5] {};
		\node (2) at (1.8,1.8) [fill,circle,inner sep=0.5] {};
		\node (3) at (3,2) [fill,circle,inner sep=0.5]{}; 
		\node (4) at (4,-1.5) [fill,circle,inner sep=0.5] {};
		\node (5) at (5,1.8) [fill,circle,inner sep=0.5] {};
		\node (6) at (6,-1) [fill,circle,inner sep=0.5] {};
		\node (7) at (6.5,0) [fill,circle,inner sep=0.5] {};
		
		\draw (0) node[left] {$0$};
		\draw (1) node[above] {$1$};
		\draw (2) node[above] {$2$};
		\draw (3) node[above] {$3$};
		\draw (4) node[below] {$4$};
		\draw (5) node[above] {$5$};
		\draw (6) node[below] {$6$};
		\draw (7) node[right] {$7$};

\draw[->-=0.66,blue,\myedgesize] (0) to [bend right=20] (4);		
\draw[->-=0.7,blue,\myedgesize] (4) -- (6); 
\draw[->-=0.7, blue, \myedgesize] (6) -- (7);






\end{tikzpicture}
\begin{tikzpicture}[xscale=\myxscale,yscale=\myyscale]
\tikzstyle{every node}=[font=\footnotesize]
\node (0) at (0,0) [fill,circle,inner sep=0.5] {};
		\node (1) at (0.7,1.2) [opacity=1,fill,circle,inner sep=0.5] {};
		\node (2) at (1.8,1.8) [fill,circle,inner sep=0.5] {};
		\node (3) at (3,2) [fill,circle,inner sep=0.5]{}; 
		\node (4) at (4,-1.5) [fill,circle,inner sep=0.5] {};
		\node (5) at (5,1.8) [fill,circle,inner sep=0.5] {};
		\node (6) at (6,-1) [fill,circle,inner sep=0.5] {};
		\node (7) at (6.5,0) [fill,circle,inner sep=0.5] {};
		
		\draw (0) node[left] {$0$};
		\draw (1) node[above] {$1$};
		\draw (2) node[above] {$2$};
		\draw (3) node[above] {$3$};
		\draw (4) node[below] {4};
		\draw (5) node[above] {$5$};
		\draw (6) node[below] {$6$};
		\draw (7) node[right] {$7$};

\draw[->-=0.66,blue,\myedgesize] (0) to [bend right=20] (4);		
\draw[->-=0.7,blue,\myedgesize] (4) -- (6); 
\draw[->-=0.7, blue, \myedgesize] (6) -- (7);

\draw[->-=0.7,red,\myedgesize] (0) -- (6); 





\end{tikzpicture}
\begin{tikzpicture}[xscale=\myxscale,yscale=\myyscale]
\tikzstyle{every node}=[font=\footnotesize]
\node (0) at (0,0) [fill,circle,inner sep=0.5] {};
		\node (1) at (0.7,1.2) [opacity=1,fill,circle,inner sep=0.5] {};
		\node (2) at (1.8,1.8) [fill,circle,inner sep=0.5] {};
		\node (3) at (3,2) [fill,circle,inner sep=0.5]{}; 
		\node (4) at (4,-1.5) [fill,circle,inner sep=0.5] {};
		\node (5) at (5,1.8) [fill,circle,inner sep=0.5] {};
		\node (6) at (6,-1) [fill,circle,inner sep=0.5] {};
		\node (7) at (6.5,0) [fill,circle,inner sep=0.5] {};
		
		\draw (0) node[left] {$0$};
		\draw (1) node[above] {$1$};
		\draw (2) node[above] {$2$};
		\draw (3) node[above] {$3$};
		\draw (4) node[below] {${4}$};
		\draw (5) node[above] {$5$};
		\draw (6) node[below] {$6$};
		\draw (7) node[right] {$7$};

\draw[->-=0.66,blue,\myedgesize] (0) to [bend right=20] (4);		
\draw[->-=0.7,blue,\myedgesize] (4) -- (6); 
\draw[->-=0.7, blue, \myedgesize] (6) -- (7);

\draw[->-=0.7,red,\myedgesize] (0) -- (6); 

\draw[->-=0.7,mygreen,\myedgesize] (0) to [bend right=20] (5); 
\draw[->-=0.7,mygreen,\myedgesize] (5) -- (6); 




\end{tikzpicture}
\begin{tikzpicture}[xscale=\myxscale,yscale=\myyscale]
\tikzstyle{every node}=[font=\footnotesize]
\node (0) at (0,0) [fill,circle,inner sep=0.5] {};
		\node (1) at (0.7,1.2) [opacity=1,fill,circle,inner sep=0.5] {};
		\node (2) at (1.8,1.8) [fill,circle,inner sep=0.5] {};
		\node (3) at (3,2) [fill,circle,inner sep=0.5]{}; 
		\node (4) at (4,-1.5) [fill,circle,inner sep=0.5] {};
		\node (5) at (5,1.8) [fill,circle,inner sep=0.5] {};
		\node (6) at (6,-1) [fill,circle,inner sep=0.5] {};
		\node (7) at (6.5,0) [fill,circle,inner sep=0.5] {};
		
		\draw (0) node[left] {$0$};
		\draw (1) node[above] {$1$};
		\draw (2) node[above] {$2$};
		\draw (3) node[above] {$3$};
		\draw (4) node[below] {$4$};
		\draw (5) node[above] {$5$};
		\draw (6) node[below] {$6$};
		\draw (7) node[right] {$7$};

\draw[->-=0.66,blue,\myedgesize] (0) to [bend right=20] (4);		
\draw[->-=0.7,blue,\myedgesize] (4) -- (6); 
\draw[->-=0.7, blue, \myedgesize] (6) -- (7);

\draw[->-=0.7,red,\myedgesize] (0) -- (6); 

\draw[->-=0.7,mygreen,\myedgesize] (0) to [bend right=20] (5); 
\draw[->-=0.7,mygreen,\myedgesize] (5) -- (6); 

\draw[->-=0.7,orange,\myedgesize] (0) to [bend right=20] (3);
\draw[->-=0.7,orange,\myedgesize] (3) to [] (5);



\end{tikzpicture}
\begin{tikzpicture}[xscale=\myxscale,yscale=\myyscale]
\tikzstyle{every node}=[font=\footnotesize]
\node (0) at (0,0) [fill,circle,inner sep=0.5] {};
		\node (1) at (0.7,1.2) [opacity=1,fill,circle,inner sep=0.5] {};
		\node (2) at (1.8,1.8) [fill,circle,inner sep=0.5] {};
		\node (3) at (3,2) [fill,circle,inner sep=0.5]{}; 
		\node (4) at (4,-1.5) [fill,circle,inner sep=0.5] {};
		\node (5) at (5,1.8) [fill,circle,inner sep=0.5] {};
		\node (6) at (6,-1) [fill,circle,inner sep=0.5] {};
		\node (7) at (6.5,0) [fill,circle,inner sep=0.5] {};
		
		\draw (0) node[left] {$0$};
		\draw (1) node[above] {$1$};
		\draw (2) node[above] {$2$};
		\draw (3) node[above] {$3$};
		\draw (4) node[below] {$4$};
		\draw (5) node[above] {$5$};
		\draw (6) node[below] {$6$};
		\draw (7) node[right] {$7$};

\draw[->-=0.66,blue,\myedgesize] (0) to [bend right=20] (4);		
\draw[->-=0.7,blue,\myedgesize] (4) -- (6); 
\draw[->-=0.7, blue, \myedgesize] (6) -- (7);

\draw[->-=0.7,red,\myedgesize] (0) -- (6); 

\draw[->-=0.7,mygreen,\myedgesize] (0) to [bend right=20] (5); 
\draw[->-=0.7,mygreen,\myedgesize] (5) -- (6); 

\draw[->-=0.7,orange,\myedgesize] (0) to [bend right=20] (3);
\draw[->-=0.7,orange,\myedgesize] (3) to [] (5);

\draw[->-=0.7,mypink,\myedgesize] (1) to [bend right=20] (3);
\draw[->-=0.7,mypink,\myedgesize] (0) to [] (1);


\end{tikzpicture}
\begin{tikzpicture}[xscale=\myxscale,yscale=\myyscale]
\tikzstyle{every node}=[font=\footnotesize]
\node (0) at (0,0) [fill,circle,inner sep=0.5] {};
		\node (1) at (0.7,1.2) [opacity=1,fill,circle,inner sep=0.5] {};
		\node (2) at (1.8,1.8) [fill,circle,inner sep=0.5] {};
		\node (3) at (3,2) [fill,circle,inner sep=0.5]{}; 
		\node (4) at (4,-1.5) [fill,circle,inner sep=0.5] {};
		\node (5) at (5,1.8) [fill,circle,inner sep=0.5] {};
		\node (6) at (6,-1) [fill,circle,inner sep=0.5] {};
		\node (7) at (6.5,0) [fill,circle,inner sep=0.5] {};
		
		\draw (0) node[left] {$0$};
		\draw (1) node[above] {$1$};
		\draw (2) node[above] {$2$};
		\draw (3) node[above] {$3$};
		\draw (4) node[below] {$4$};
		\draw (5) node[above] {$5$};
		\draw (6) node[below] {$6$};
		\draw (7) node[right] {$7$};

\draw[->-=0.66,blue,\myedgesize] (0) to [bend right=20] (4);		
\draw[->-=0.7,blue,\myedgesize] (4) -- (6); 
\draw[->-=0.7, blue, \myedgesize] (6) -- (7);

\draw[->-=0.7,red,\myedgesize] (0) -- (6); 

\draw[->-=0.7,mygreen,\myedgesize] (0) to [bend right=20] (5); 
\draw[->-=0.7,mygreen,\myedgesize] (5) -- (6); 

\draw[->-=0.7,orange,\myedgesize] (0) to [bend right=20] (3);
\draw[->-=0.7,orange,\myedgesize] (3) to [] (5);

\draw[->-=0.7,mypink,\myedgesize] (1) to [bend right=20] (3);
\draw[->-=0.7,mypink,\myedgesize] (0) to [] (1);

\draw[->-=0.7,violet,\myedgesize] (1) -- (2); 
\draw[->-=0.7,violet,\myedgesize] (2) -- (3); 

\end{tikzpicture}
\begin{tikzpicture}[xscale=\myxscale,yscale=\myyscale]
\tikzstyle{every node}=[font=\footnotesize]
\node (0) at (0,0) [fill,circle,inner sep=0.5] {};
		\node (1) at (0.7,1.2) [opacity=1,fill,circle,inner sep=0.5] {};
		\node (2) at (1.8,1.8) [fill,circle,inner sep=0.5] {};
		\node (3) at (3,2) [fill,circle,inner sep=0.5]{}; 
		\node (4) at (4,-1.5) [fill,circle,inner sep=0.5] {};
		\node (5) at (5,1.8) [fill,circle,inner sep=0.5] {};
		\node (6) at (6,-1) [fill,circle,inner sep=0.5] {};
		\node (7) at (6.5,0) [fill,circle,inner sep=0.5] {};
		
		\draw (0) node[left] {$0$};
		\draw (1) node[above] {$1$};
		\draw (2) node[above] {$2$};
		\draw (3) node[above] {$3$};
		\draw (4) node[below] {$4$};
		\draw (5) node[above] {$5$};
		\draw (6) node[below] {$6$};
		\draw (7) node[right] {$7$};

\draw[->-=0.66,blue,\myedgesize] (0) to [bend right=20] (4);		
\draw[->-=0.7,blue,\myedgesize] (4) -- (6); 
\draw[->-=0.7, blue, \myedgesize] (6) -- (7);

\draw[->-=0.7,red,\myedgesize] (0) -- (6); 

\draw[->-=0.7,mygreen,\myedgesize] (0) to [bend right=20] (5); 
\draw[->-=0.7,mygreen,\myedgesize] (5) -- (6); 

\draw[->-=0.7,orange,\myedgesize] (0) to [bend right=20] (3);
\draw[->-=0.7,orange,\myedgesize] (3) to [] (5);

\draw[->-=0.7,mypink,\myedgesize] (1) to [bend right=20] (3);
\draw[->-=0.7,mypink,\myedgesize] (0) to [] (1);

\draw[->-=0.7,violet,\myedgesize] (1) -- (2); 
\draw[->-=0.7,violet,\myedgesize] (2) -- (3); 

\draw[->-=0.7,mylightblue,\myedgesize] (5) -- (7); 
\end{tikzpicture}
\caption{The paths $\lambda_i(453126)$ and triangulation for $\eta_Q(453126)$ from Example~\ref{exam:eta}}
\label{fig:example_triangulation}
\end{figure}


\section{Representations of quivers of type $\mathbb{A} $}\label{sect 3}
Let $\kb$ be an algebraically closed field, for example, $\kb=\mathbb{C}$. Given a quiver $Q$, we denote by $Q_0$ the set of its vertices and by $Q_1$ its set of arrows. For $\alpha\in Q_1$, let $s(\alpha)$ be the source of $\alpha$ and~$t(\alpha)$ be its target. A \emph{path} from $i$ to $j$ in $Q$ is a sequence of arrows $\za_1\za_2\ldots\za_\ell$ such that $s(\za_1)=i$, $t(\za_\ell)=j$, and $t(\za_h)=s(\za_{h+1})$, for all $1\le h\le \ell-1$. The integer $\ell $ is called the \emph{length} of the path. Paths of length zero are called \emph{constant paths} and are denoted by $e_i$, $i\in Q_0$.

 A \emph{representation} $M=(M_i,\varphi_\alpha)$ of $Q$ consists of a $\Bbbk$-vector space $M_i$, for each vertex $i\in Q_0$,  and a $\kb$-linear map $\varphi_\alpha\colon M_{s(\alpha)} \to M_{t(\alpha)}$, for each arrow $\alpha\in Q_1$. 
If each vector space $M_i$ is finite dimensional, we say that 
 $M$ is \emph{finite dimensional}, and the \emph{dimension vector} $\underline{\dim}\,M$  of $M$ is the vector $($dim$\,M_i)_{i\in Q_0}$ of the dimensions of the vector spaces.  
For example,  the representation in Figure~\ref{fig:plain_quiver} is a representation with dimension vector $(0,1,1,1,1,0,0)$ 
of the type $\mathbb{A}_7$ quiver in Figure~\ref{fig:polygon6}.
 Let $\rep Q$ denote the category of finite dimensional representations of $Q$ and
let $\ind Q$ denote a full
subcategory whose objects are one  representative of the
isoclass of each indecomposable representation.  The category $\rep Q$ is equivalent to the category of finitely generated modules over the path algebra $\kb Q$ of $Q$. The Auslander--Reiten quiver $\zG_{\rep Q}$ of $\rep Q$ has the isoclasses of indecomposable representations as vertices and irreducible morphisms as arrows. For more information about representations of quivers we refer to the textbooks~ \cite{ASS,S}.

\begin{figure}[htbp]
\begin{center}
\begin{tikzpicture}[xscale=1.5,yscale=0.5,>=latex]
\def\posetedgecolor{blue}
\node(1) at (0,3) {$0$}; 
\node(2) at (1,2) {$\Bbbk$}; 
\node(3) at (2,1) {$\Bbbk$}; 
\node(4) at (3,0) {$\Bbbk$}; 
\node(5) at (4,1) {$\Bbbk$};   
\node(6) at (5,0) {$0$}; 
\node(7) at (6,1) {$0$}; 

\draw[->] (1) -- (2) node[\posetedgecolor,pos=0.7,above] {\small $0$};

\draw[->]  (2) -- (3) node[\posetedgecolor,pos=0.7,above] {\small $1$};

\draw[->] (3) -- (4) node[\posetedgecolor,pos=0.7,above] {\small $1$}; 

\draw[<-] (4) -- (5) node[\posetedgecolor,pos=0.7,below] {\small $1$}; 

\draw[->] (5) -- (6) node[\posetedgecolor,pos=0.7,above] {\small $0$}; 

\draw[<-] (6) -- (7) node[\posetedgecolor,pos=0.7,below] {\small $0$}; 
\end{tikzpicture}
\caption{A representation of the quiver in Figure~\ref{fig:polygon6}}
\label{fig:plain_quiver}
\end{center}
\end{figure}
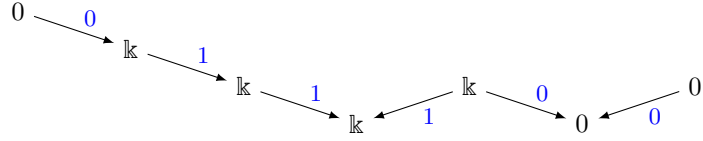

From now on, let $Q$ be a Dynkin quiver of type $\mathbb{A}_{n+2}$, labeled as before  (see Figure \ref{fig:polygon6}).

In the remainder of this section, we recall the classification of indecomposable representations and irreducible morphisms in $\rep Q$, and give an interpretation of hooks and cohooks as boundary edges of the polygon $P(Q)$. 

\subsection{Indecomposable representations}
For each $1\le i\le j\le n+2$, let $M(i,j)$  denote the indecomposable representation supported on the vertices between $i$ and $j$. Thus $M(i,j)=(M_\ell,\varphi_\za)$ with
\[M_\ell=\left\{\begin{array}{ll}
\kb&\textup{if $i\le \ell\le j$;}\\
0&\textup{otherwise;}\end{array}\right.\]
and   $\varphi_\za=1$, whenever $M_{s(\za)}$ and $M_{t(\za)}$ are nonzero, and $\varphi_\za=0$, otherwise.
The representations $M(i,i)$ are \emph{simple} representations (containing no proper nonzero subrepresentations) and are denoted by $S(i)$, $i\in Q_0$.

We have the following proposition, which is a special case of Gabriel's theorem \cite{Gabriel}. 
\begin{prop}
 \label{prop gabriel} Let $Q$ be a Dynkin quiver of type $\mathbb{A}_{n+2}$.  
 Up to isomorphism, the indecomposable representations of $Q$ are precisely the representations $M(i,j)$ with $1\le i\le j \le n+2$.
\end{prop}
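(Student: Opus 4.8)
The plan is to prove Proposition~\ref{prop gabriel} directly, without invoking the full machinery of Gabriel's theorem, since type $\mathbb{A}_{n+2}$ admits an elementary argument. First I would establish that each $M(i,j)$ is indeed indecomposable: its endomorphism ring is local (in fact isomorphic to $\kb$), because any endomorphism of $M(i,j)$ is determined by a single scalar acting compatibly on all the one-dimensional spaces $M_\ell=\kb$ for $i\le\ell\le j$, the structure maps being identities there. Hence $M(i,j)$ has no nontrivial idempotent endomorphisms and is indecomposable. This also shows the $M(i,j)$ are pairwise non-isomorphic, since an isomorphism must match up the supports, and the support determines the pair $(i,j)$.

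Next I would show that every indecomposable representation is of this form. The key step is the following structural claim about the support: if $M=(M_\ell,\varphi_\za)$ is indecomposable, then the set of vertices $\ell$ with $M_\ell\neq 0$ is an interval $[i,j]$, and moreover $\dim M_\ell=1$ for all $\ell$ in that interval with all structure maps on the interval isomorphisms. To see this, I would proceed by induction on $\sum_\ell \dim M_\ell$. The idea is to locate a vertex where the representation can be ``split off'': because the underlying graph is a path $1-2-\cdots-(n+2)$ with no cycles, one can peel from an end. Concretely, consider a source vertex $s$ of $Q$ that is an endpoint of the support interval, or more carefully, use that at a vertex of the path the incident maps can be simultaneously put into a normal form. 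A clean way: pick a vertex $\ell$ in the support that is ``extremal'' and use elementary linear algebra (choosing bases adapted to images and kernels of the one or two incident maps) to write $M$ as a direct sum unless $M_\ell$ is one-dimensional and the incident maps are as rigid as possible; indecomposability then forces the whole representation into the stated shape. Iterating/combining these local normalizations along the path, and using that $Q$ is a tree so there are no compatibility obstructions around cycles, yields that $M\cong M(i,j)$ for the interval $[i,j]=\Supp M$.

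An alternative, perhaps cleaner route, is to count: the representations $M(i,j)$ for $1\le i\le j\le n+2$ number $\binom{n+3}{2}$, which equals the number of positive roots of the root system $\mathbb{A}_{n+2}$, and their dimension vectors are exactly the positive roots (intervals of $1$'s). One then quotes that the path algebra of a Dynkin quiver is representation-finite with indecomposables in bijection with positive roots — but since the problem says this is ``a special case of Gabriel's theorem,'' I would instead give the self-contained inductive argument above rather than re-deriving Gabriel's theorem.

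The main obstacle is the inductive step showing the support is an interval with one-dimensional spaces and invertible structure maps: one must handle a vertex of the path with two incident arrows and show that, after a change of basis, any failure of the ``interval of $1$'s'' pattern (a non-injective or non-surjective structure map, or a space of dimension $\ge 2$) produces a direct summand, contradicting indecomposability. This requires care in choosing compatible bases at adjacent vertices and in orchestrating the induction so that peeling off a summand at one end does not disturb the rest; the acyclicity of the type $\mathbb{A}$ quiver is exactly what makes this bookkeeping go through.
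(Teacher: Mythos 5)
The paper does not actually prove this proposition: it is stated as a special case of Gabriel's theorem and dispatched with a citation to \cite{Gabriel}, so any self-contained argument is by definition a different route. Your first half is complete and correct: $\End M(i,j)\cong\kb$ because an endomorphism is a compatible family of scalars on the one-dimensional spaces linked by identity maps, so $M(i,j)$ is indecomposable, and the dimension vector (equivalently the support interval) separates the $M(i,j)$ up to isomorphism. Your second half is the classical elementary classification of type $\mathbb{A}$ representations --- induct on total dimension, locate an extremal vertex of the support, and choose bases adapted to kernels and images of the incident maps so that any failure of the ``interval of ones with invertible maps'' pattern splits off a summand. That strategy is sound and is known to close, but as written it remains a plan rather than a proof: the decisive linear-algebra normalization at a vertex with two incident arrows (and the bookkeeping ensuring that splitting at one end is compatible with the rest of the path) is described, not carried out. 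If you want it airtight, the cleanest organization is to induct on the number of vertices: at the end vertex $n+2$ with incident arrow $\alpha$, write $M_{n+2}=\operatorname{im}\varphi_\alpha\oplus C$ or $M_{n+1}=\ker\varphi_\alpha\oplus C'$ according to the orientation of $\alpha$, split off the resulting semisimple summand, and apply the inductive hypothesis to the restriction to the subquiver on $1,\dots,n+1$. Your root-counting aside is consistent ($\binom{n+3}{2}$ interval modules versus $\binom{n+3}{2}$ positive roots of $\mathbb{A}_{n+2}$) but, as you note, it only repackages Gabriel's theorem. What your approach buys is a proof independent of the root-system machinery; what the paper's citation buys is brevity and the statement in uniform Dynkin generality.
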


\subsection{Irreducible morphisms in terms of hooks and cohooks}
A morphism from a representation $(M_\ell, \phi_\alpha)$ to $(N_\ell, \psi_\alpha)$ is a sequence of linear maps $(f_\ell)$ such that for each arrow $\alpha\in Q_1$ the following diagram commutes.
\begin{center}
\begin{tikzpicture}[scale=0.60]
  \node (A) at (0,0) {$M_{s(\alpha)}$}; 
  \node(B) at (4,0) {$M_{t(\alpha)}$};
  \node (C) at (0,-3) {$N_{s(\alpha)}$}; 
  \node (D) at (4,-3) {$N_{t(\alpha)}$};
    \node at (2,-2.65) {$\psi_{\alpha}$};
    \node at (2,.35) {$\phi_{\alpha}$};
    \node at (-.65,-1.5) {$f_{s(\alpha)}$};
    \node at (4.75,-1.5) {$f_{t(\alpha)}$};
  \draw[->] (A.east)--(B.west);
  \draw[->] (A.south)--(C.north);
  \draw[->] (C.east)--(D.west);
  \draw[->] (B.south)--(D.north);
\end{tikzpicture}
\end{center}
In this subsection, we describe irreducible morphisms in type $\mathbb{A}$. 

Roughly speaking, an irreducible morphism between indecomposable representations is  a morphism that does not factor non-trivially. 
For a precise definition, see for example the textbooks~\cite{ASS,S}.
The work of Butler and Ringel from \cite{BR} shows  how each irreducible morphism  in a string algebra is determined by adding a hook or removing a cohook from an indecomposable representation to produce a new representation.
Recall that each irreducible morphism in $\rep(Q)$ is an arrow in the Auslander--Reiten quiver.
In Figure~\ref{fig:ar_quiver} we show an example.
Each (blue) $h$ and (red) $c$ indicates an arrow that corresponds to adding a hook and removing a cohook, respectively. 
In the following, we recall this construction in the special case of type $\mathbb{A}$.

\begin{definition}[\cite{BR}]
\label{def:hooks_cohooks}
Let $Q$ be a quiver of type $\mathbb{A}$  
and let $\alpha$ be an arrow in $Q$. 
The \emph{hook} of $\alpha$, denoted by hook$(\alpha)$ is given by the maximal path of $Q$ starting at $x=s(\alpha)$ that does not use $\alpha$. 
Note that this maximal path may consist of only one vertex  $x=s(\alpha)$  and no arrows. 

The \emph{cohook} of $\alpha$, denoted by cohook$(\alpha)$ is given by the maximal path of $Q$  ending at $y=t(\alpha)$ that does not use $\alpha$. Note that this maximal path may consist of only one vertex $y=t(\alpha)$ and no arrows. 

That is, hook$(\alpha)$ and  cohook$(\alpha)$ are of the form 
shown in Figure~\ref{fig:hook_cohook} (without the arrow $\alpha$).  

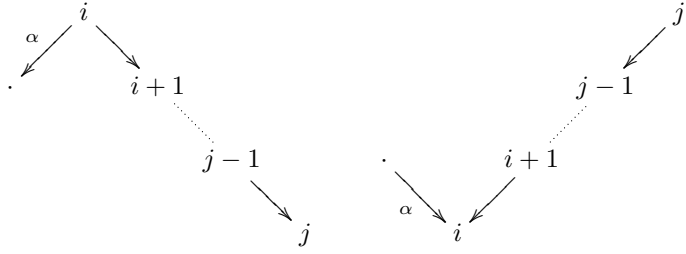
\begin{figure}
\[\xymatrix@!@R0pt@C0pt{& i\ar[ld]_\alpha\ar[rd]\\\cdot&& {i+1} \ar@{.}[rd]\\&&& {j-1}\ar[rd]\\&&&& j}
\qquad 
\xymatrix@!@R0pt@C0pt{&&&& j\ar[ld]\\&&& {j-1}\ar@{.}[ld]\\ \cdot \ar[rd]_\alpha&& {i+1} \ar[ld]\\& i}\]
\caption{The hook of the arrow $\alpha$, starting from $ i=s(\alpha)$ (left); the cohook of the arrow $\alpha$, ending at $ i=t(\alpha)$ (right)}
\label{fig:hook_cohook}
\end{figure}

Denote by $H(\alpha)$, respectively $C(\alpha)$, the indecomposable representation of $Q$ supported on the hook, respectively cohook, of the arrow  $\alpha$, more precisely

$$H(\alpha)_x=\left\{\begin{array}{ll}\Bbbk &\textup{if $x$ is a vertex in hook$(\alpha)$;}\\
0&\textup{otherwise;}
\end{array}\right.
\qquad
C(\alpha)_x=\left\{\begin{array}{ll}\Bbbk &\textup{if $x$ is a vertex in cohook$(\alpha)$;}\\
0&\textup{otherwise.}
\end{array}\right.
$$

\end{definition}

\begin{remark}
It may happen that $H(\alpha)=C(\beta)$.  
For the quiver in Figure~\ref{fig:polygon6}, we have
$H(2\to 3)=C(1\to 2)=S(2)$
is the simple representation at vertex $2$, and 
$H(4 
\leftarrow 5 )=
C(6\leftarrow 7)=M(5,6)$
is the indecomposable representation supported at vertices $5$ and $6$. 
\end{remark}

Let $\alpha$ be an arrow in $Q_1$ whose target or source is either $i$ or $j$, and assume that $\alpha$ does not lie in the support of $M=M(i,j)$.
Adding a hook to $M$ is an operation which produces a new indecomposable representation $N$ whose support is
$\supp(M)\sqcup \supp(H(\za))$.
Similarly removing a cohook from a representation $N=M(i,\ell)$ or $M(h,j)$ is an operation which produces a new indecomposable representation $M$ whose support is $\supp(N)\setminus \supp(C(\za))$.
In the following proposition, 
we fix the representation $M$, 
and we either add a hook to $M$ to produce a ``bigger'' representation, or we remove a cohook from a ``bigger'' representation to obtain $M$.

\begin{prop}\label{prop:4possibilities}
Let $Q$ be a quiver of type $\mathbb{A}$, let $M=M(i,j)$, and let $N$ be an indecomposable representation satisfying $\supp(M)\subset \supp(N)$.
\begin{enumerate}
\item If $f: M\to N$ is an irreducible morphism then $N$ is obtained by adding a hook to~$M$. 
This can be done in at most two ways.
\begin{enumerate}
\item If $\za\colon j\ot (j+1)$ and $H(\za)=M(j+1,\ell)$ then $N= M(i,\ell)$.
\item If $\za: (i-1) \to i$ and $H(\za)=M(h,i-1)$ then $N=M(h,j)$.
\end{enumerate}
\item 
If $f: N\to M$ is an irreducible morphism then $M$ is obtained by removing a cohook from~$N$.
This can be done in at most two ways.
\begin{enumerate}
\item If $\za:j\to (j+1)$ and $C(\za)=M(j+1,\ell)$ then $N=M(i,\ell)$.
\item If $\za:(i-1)\ot i$ and $C(\za)=M(h,i-1)$ then $N=M(h,j)$.
\end{enumerate}
\end{enumerate}
Moreover all irreducible morphisms between indecomposable representations are of this form.
\end{prop}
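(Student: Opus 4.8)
The plan is to obtain the statement from the Butler--Ringel description of irreducible morphisms between string modules \cite{BR} --- applicable since the path algebra $\kb Q$ is a string algebra --- by translating it into the interval notation $M(i,j)$. I will repeatedly use two elementary facts: (i) an irreducible morphism between indecomposable modules is either a monomorphism or an epimorphism \cite{ASS,S}; and (ii) a subrepresentation, respectively a quotient representation, of an interval module $M(h,\ell)$ is a direct sum of interval modules supported on subintervals of $[h,\ell]$. In fact the subspace of $M(h,\ell)$ supported on $[i,j]\subseteq[h,\ell]$ is a subrepresentation if and only if no arrow of $Q$ points out of $[i,j]$ into $[h,\ell]\setminus[i,j]$; equivalently, the boundary arrow at $\{i-1,i\}$ (when $h<i$) points $(i-1)\to i$ and the boundary arrow at $\{j,j+1\}$ (when $j<\ell$) points $(j+1)\to j$, and dually for quotients with the boundary arrows reversed.

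I would then reduce items (1) and (2) to the existence of a canonical inclusion, respectively projection, and read off the four cases. Write $N=M(h,\ell)$, so $\supp(M)\subsetneq\supp(N)$ means $h\le i\le j\le\ell$ with $(h,\ell)\ne(i,j)$. In case (1), $f\colon M\to N$ cannot be an epimorphism by (ii), hence is a monomorphism; the image of a monomorphism between indecomposables is the unique subrepresentation of $N$ isomorphic to $M$, namely the one supported on $[i,j]$, so $f$ is (up to a scalar) the canonical inclusion. By (ii) this requires $(i-1)\to i$ if $h<i$ and $(j+1)\to j$ if $j<\ell$ --- which are exactly the hypotheses in (1b) and (1a) on the arrow $\alpha$, whose hook is the maximal directed path $H(\alpha)=M(h,i-1)$, resp.\ $H(\alpha)=M(j+1,\ell)$, yielding $N=M(h,j)$, resp.\ $N=M(i,\ell)$. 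Case (2) is dual: $f\colon N\to M$ must be an epimorphism, hence the canonical projection, which exists exactly when the boundary arrows point the other way, giving (2a) and (2b). The bound ``at most two ways'' records that $M(i,j)$ has two ends, that the relevant arrow at each end is unique when it exists, and that enlarging $[i,j]$ at both ends at once makes $f$ factor through a one-ended enlargement. The final sentence of the proposition then follows from (i): any irreducible morphism between indecomposables is mono or epi, so one support contains the other and we are in case (1) or (2).

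The content I expect to require genuine work --- and which \cite{BR} supplies --- is that the canonical maps above \emph{are} irreducible and that each enlargement is by exactly one hook (resp.\ cohook). In case (1a) this is two checks. First, irreducibility of $M(i,j)\hookrightarrow M(i,\ell)$: in a factorization $M(i,j)\xrightarrow{g}X\xrightarrow{h}M(i,\ell)$ through an indecomposable $X=M(a,b)$, the map $g$ is automatically a monomorphism (since $hg=f$ is injective), so if $X\not\cong M(i,j)$ then $g$ is not split; a short case analysis on $(a,b)$, using (ii) at the boundary, commutativity of $h$, and the \emph{maximality} of the forward path $[j+1,\ell]$, forces $X\cong M(i,\ell)$, so that $h$ is a split epimorphism. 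Second, no other right-enlargement is irreducible: a smaller one $M(i,\ell')$ with $j<\ell'<\ell$ factors as $M(i,j)\hookrightarrow M(i,\ell)\twoheadrightarrow M(i,\ell')$, and a larger one $M(i,\ell')$ with $\ell'>\ell$ factors as $M(i,j)\hookrightarrow M(i,\ell)\hookrightarrow M(i,\ell')$, neither factorization being split. The cases (1b), (2a), (2b) follow by the left--right symmetry of the linear $\mathbb{A}_{n+2}$ order and by the duality $D=\Hom_{\kb}(-,\kb)$, which exchanges monomorphisms with epimorphisms and $Q$ with $Q^{\textup{op}}$ while fixing the interval notation.
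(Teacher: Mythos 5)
Your argument is correct, but it is worth saying up front that the paper does not actually prove this proposition: it is stated as a specialization to type $\mathbb{A}$ of the Butler--Ringel description of irreducible maps between string modules, with the burden of proof carried entirely by the citation to \cite{BR}. What you have done instead is supply a self-contained verification in the interval-module language: classify monos and epis between interval modules by the direction of the two boundary arrows, observe that an irreducible map with strictly growing support must be the canonical inclusion, and then check directly that the one-ended enlargement by the full hook is irreducible while every other enlargement (shorter, longer, or two-ended) admits a non-split factorization. This is more elementary and more informative than the paper's approach, since the maximality of the hook is seen to be exactly what blocks the factorization $M(i,j)\hookrightarrow X\twoheadrightarrow M(i,\ell)$ through a longer interval. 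Two small points deserve a remark. First, the claim that $f$ cannot be an epimorphism in case (1) is really a dimension count ($\supp M\subsetneq\supp N$ forces $\dim M<\dim N$), not an application of your fact (ii). Second, irreducibility requires ruling out factorizations through \emph{arbitrary} modules $X$, not just indecomposable ones; your reduction is legitimate, but only because in type $\mathbb{A}$ every $\Hom$ space between indecomposables is at most one-dimensional, so $\rad^2(M,N)$ is spanned by single composites through indecomposables and each such composite is a scalar multiple of the canonical map --- this is exactly the standard criterion $f\in\rad(M,N)\setminus\rad^2(M,N)$, and it would be worth making that step explicit. With those caveats the case analysis you outline does close up (I checked the critical case $X=M(a,b)$ with $[a,b]\supsetneq[i,\ell]$, which is excluded precisely by the maximality of the hook and the direction of the boundary arrow at $\{\ell,\ell+1\}$), and the duality $D$ together with $Q\mapsto Q^{\mathrm{op}}$ correctly delivers part (2) from part (1).
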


\begin{example}
 In the example of Figure~\ref{fig:ar_quiver}, we can illustrate the four different types of irreducible morphisms as follows.
 In each example, $M$ is the representation with the smaller support.
\[ \begin{array}{lcclc}
{\rm (1)(a)} & \begin{smallmatrix}2\\3\\ 4
\end{smallmatrix}\to \begin{smallmatrix}
2 \quad \;\;  \\[-1mm] 
\; \; 3 \;\; 5 \\[-1mm] 
\quad \; \; 4 \;\; 6 
\end{smallmatrix}, \ \za\colon 5\to 4,\   H(\za)= 
\begin{smallmatrix}
 5\\6
\end{smallmatrix}
&\qquad&
 {\rm (2)(a)} & 
\begin{smallmatrix}
5\ 7\\6
\end{smallmatrix}
\to \begin{smallmatrix}5
\end{smallmatrix}, \ \za\colon 5\to 6,\   C(\za)= 
\begin{smallmatrix}
 7\\6
\end{smallmatrix}
\\ \\
{\rm (1)(b)} & \begin{smallmatrix}2\\3\\4
\end{smallmatrix}\to \begin{smallmatrix}1\\2\\3\\4
\end{smallmatrix}, \ \za\colon 1\to 2,\   H(\za)= 
\begin{smallmatrix}
 1
\end{smallmatrix}
&\qquad&
 {\rm (2)(b)} & 
\begin{smallmatrix}
 1\quad \\2\quad \\3\ 5\\4
\end{smallmatrix}
\to \begin{smallmatrix}5
\end{smallmatrix}, \ \za\colon 5\to 4,\   C(\za)= 
\begin{smallmatrix}
 1\\2\\3\\4\\ \\
\end{smallmatrix}
\end{array}\]
\end{example}

\subsection{Maximal increasing and decreasing paths}%

A path $i\to \ldots\to j$ in $Q$ is said to be \emph{increasing} if $i\le j$, and \emph{decreasing} if $i\ge j$. Note that the constant paths $e_i$ are both increasing and decreasing. 
A decreasing path is called a \emph{maximal decreasing path} if it is not a proper subpath of a decreasing path.
Similarly, 
an increasing path is called a \emph{maximal increasing path} if it is not a proper subpath of an increasing path.
Note that if $c$ is a maximal increasing path or 
a maximal decreasing path in a Dynkin quiver of type $\mathbb{A}$, then $c$ is a maximal path of $Q$ or a constant path. 
See Example~\ref{example after corollary A2}. 

The following lemma gives an interpretation of maximal 
increasing paths and maximal decreasing paths as boundary edges in the labeled polygon $P(Q)$. 
In the polygon $P(Q)$, let $\gamma(a,b)$ denote the line segment between vertices $a$ and $b$ for $0 \leq a < b \leq  n+2$. 

\begin{lemma}\label{lemma:maximal_paths_to_boundary_edges}
Let $Q$ be a Dynkin quiver of type $\mathbb{A}_{n+2}$. 

\begin{enumerate}
\item The map $(i\to\ldots\to j)\mapsto \gamma(i-1,j)$ is a bijection between  the maximal increasing paths in $Q$ and the lower boundary edges of $P(Q)$.

\item The map $( i\to\ldots\to  j)\mapsto \gamma(j-1,i)$ is   bijection between the maximal decreasing paths in $Q$ and the upper boundary edges of $P(Q)$.

\end{enumerate}
\end{lemma}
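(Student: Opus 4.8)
The plan is to treat the two parts symmetrically, so I describe part~(1) in detail and note that part~(2) follows by an up--down reflection. First I would record the combinatorial shape of a maximal increasing path. A path $i \to \cdots \to j$ in $Q$ is increasing and maximal precisely when each arrow along it points from a smaller to a larger vertex, the path cannot be extended on the left (so either $i = 1$, or the arrow at vertex $i$ in $Q$ incident to $i-1$ points $i \leftarrow (i-1)$'s opposite, i.e. $i \ot (i-1)$ is \emph{not} in $Q$ in the increasing direction — more precisely $(i-1) \to i$ is not an arrow, so $i-1 \in \underline{[n+1]}$ or $i = 1$), and cannot be extended on the right (so either $j = n+2$, or $j \to (j+1)$ is not an arrow, i.e. $j \in \underline{[n+1]}$ or $j = n+2$). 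Since the underlying graph is a path $1 - 2 - \cdots - (n+2)$, once the endpoints $i \le j$ are fixed there is a unique path between them, and it is increasing iff \emph{every} interior edge $\ell - (\ell+1)$ for $i \le \ell < j$ is oriented $\ell \to (\ell+1)$, that is $\ell \in \overline{[n+1]}$ for all $i \le \ell \le j-1$. So a maximal increasing path is determined by and corresponds to a maximal run of consecutive upper-barred integers $\{i, i+1, \ldots, j-1\} \subseteq \overline{[n+1]}$ bounded on each side by a lower-barred integer or by the endpoints $0$ or $n+2$ of the index range — with the degenerate case $i = j$ (a constant path $e_i$) corresponding to $i - 1 \in \underline{[n+1]} \cup \{0\}$ and $i \in \underline{[n+1]} \cup \{n+2\}$, i.e. two consecutive non-upper-barred positions.

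Next I would identify the lower boundary edges of $P(Q)$. By the construction recalled in Section~\ref{sect 2}, the lower boundary of $P(Q)$ is the path $\lambda_0$ from vertex $0$ to vertex $n+2$ through all the lower-barred vertices $\underline{[n+1]}$ in numerical order; all other vertices (the upper-barred ones) lie strictly above the line $L$. Hence the lower boundary edges are exactly the segments $\gamma(a,b)$ where $a < b$ are consecutive elements of the ordered set $\{0\} \cup \underline{[n+1]} \cup \{n+2\}$, i.e. $a, b \notin \overline{[n+1]}$ (allowing $a = 0$, $b = n+2$) and every strictly-between integer $a < \ell < b$ lies in $\overline{[n+1]}$. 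This is visibly the same bookkeeping data as in the previous paragraph under the substitution $a = i-1$, $b = j$: the condition "$\ell \in \overline{[n+1]}$ for all $i \le \ell \le j-1$" is "$\ell \in \overline{[n+1]}$ for all $a < \ell < b$", and the boundary conditions "$i - 1 \in \{0\} \cup \underline{[n+1]}$ and $j \in \underline{[n+1]} \cup \{n+2\}$" are exactly "$a \in \{0\} \cup \underline{[n+1]}$ and $b \in \underline{[n+1]} \cup \{n+2\}$". So the assignment $(i \to \cdots \to j) \mapsto \gamma(i-1, j)$ is well defined (lands in the lower boundary) and has an obvious two-sided inverse $\gamma(a,b) \mapsto (a+1 \to \cdots \to b)$, giving the bijection.

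For part~(2), I would apply the order-reversing relabeling $\ell \mapsto (n+3) - \ell$ on $Q_0$, or equivalently reflect $P(Q)$ about $L$ so that upper-barred and lower-barred swap roles and maximal increasing paths become maximal decreasing paths; chasing the index shift through this reflection turns the formula $\gamma(i-1,j)$ into $\gamma(j-1,i)$, which is what part~(2) asserts. Alternatively one repeats the argument verbatim with "decreasing", "upper-barred", "$\lambda_{n+1}$", and "$\ell \ot (\ell+1)$" in place of their counterparts. I expect the only real care needed — the "main obstacle," though it is minor — is the handling of the degenerate constant-path case $i = j$ and of the extreme edges touching vertices $0$ and $n+2$: one must check that $e_i$ genuinely corresponds to the lower edge $\gamma(i-1,i)$ precisely when both $i-1$ and $i$ are "not upper-barred" in the appropriate extended sense, and that the counts match at the two ends of the polygon. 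Everything else is a direct translation between two descriptions of the same list of maximal runs.
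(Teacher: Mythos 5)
Your proof is correct and follows essentially the same route as the paper's: both identify maximal increasing paths with maximal runs of consecutive upper-barred vertices bounded by elements of $\{0\}\cup\underline{[n+1]}\cup\{n+2\}$, and match these with the lower boundary edges of $P(Q)$ under the index shift $a=i-1$, $b=j$. The only difference is organizational --- the paper splits into the constant and non-constant cases while you give a unified description with an explicit inverse --- and your handling of the degenerate constant-path and endpoint cases agrees with the paper's.
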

\begin{proof}
Recall that $\underline{[n+1]}$ is the set of vertices on the lower part of the boundary of $P(Q)$ and $\overline{[n+1]}$ is the set of vertices on the upper part of the boundary.

(1) 
Let $c$ be a maximal increasing path in $Q$. 
Since $Q$ is  of  type $\mathbb{A}$, either $c= e_i$ is constant or $c$ is a maximal path in $Q$. 

First, suppose $c=e_i$ is a maximal increasing path which is constant. Then either
$  (i+1) \to  i \to  (i-1)$ is a decreasing path in $Q$, or  
 $ i=1$ and is a sink, or 
 $ i=n+2$ and is a source. 
 Therefore $ i \in \underline{[n+1]}$ (if $i \neq n+2$)  
 and $ {i-1} \in \underline{[n+1]}$ (if $i \neq 1$).   
Then the line segment $\gamma(i-1,i)$ is a lower boundary edge.

Next, let $c$  be a non-constant maximal increasing path $   i \to  \dots \to  j$ in $Q$. 
Then $c$ is a maximal path in $Q$, so 
the vertex $ i$ is a source and the vertex $ j$ is a sink of $Q$. 
We want to show that the line segment $\gamma(i - 1, j)$ is a bottom boundary edge. Note that
the vertices $ i,\ldots, {j-1}$ are in
$\overline{[n+1]}$, by construction. 
Since $ i$ is a source, either $i=1$ 
or $ {i - 1}\in\underline{[n+1]}$. 
Since $ j$ is a sink, either $ j\in\underline{[n+1]}$ 
or $j=n+2$. 
Then the  vertices ${ i}, \ldots, { (j-1)}$ 
 form a maximal consecutive sequence of integers which label $j-i$ consecutive  vertices of the upper boundary of the polygon $P(Q)$, so $\gamma(i - 1, j)$ is a lower  boundary edge. 
It follows from the construction that this map is a bijection.

The proof of (2) is analogous to part (1). 
\end{proof}

\begin{corollary}
 A Dynkin quiver of type $\mathbb{A}_{n+2}$ has exactly $n+3$ maximal increasing and maximal decreasing paths.
\end{corollary}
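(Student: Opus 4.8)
The plan is to deduce this directly from Lemma~\ref{lemma:maximal_paths_to_boundary_edges}. The polygon $P(Q)$ is an $(n+3)$-gon, so its boundary consists of exactly $n+3$ edges. These edges are partitioned into the lower boundary edges (those incident to vertices of $\underline{[n+1]}$, together with the two extreme edges at $0$ and $n+2$ depending on the orientation) and the upper boundary edges. Part (1) of the Lemma gives a bijection between maximal increasing paths in $Q$ and lower boundary edges of $P(Q)$, and part (2) gives a bijection between maximal decreasing paths in $Q$ and upper boundary edges of $P(Q)$.

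First I would observe that every boundary edge of $P(Q)$ is either an upper boundary edge or a lower boundary edge, and these two sets are disjoint and together exhaust the $n+3$ boundary edges. Combining this with the two bijections from the Lemma, the number of maximal increasing paths plus the number of maximal decreasing paths equals $n+3$. But more is true and is what the corollary asserts: the count of maximal increasing paths \emph{alone} is $n+3$, and likewise for maximal decreasing paths. This is because the bijection in part (1) is between maximal increasing paths and \emph{all} lower boundary edges, but here I need to be careful — re-reading the Lemma, part (1) matches maximal increasing paths with lower boundary edges only. So to get the count $n+3$ for maximal increasing paths, I would note that the constant paths $e_i$ fill in the remaining lower boundary edges: by the proof of the Lemma, each vertex $i$ with $i\in\underline{[n+1]}$ or $i-1\in\underline{[n+1]}$ (i.e.\ where $e_i$ is a maximal increasing path) contributes the short boundary edge $\gamma(i-1,i)$, and every lower boundary edge arises exactly once from either a non-constant maximal path or a constant path. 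Hence the number of maximal increasing paths equals the number of lower boundary edges, which together with the upper boundary edges totals $n+3$.

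The subtle point — and the only thing requiring care — is to reconcile ``$n+3$ maximal increasing paths'' with ``$n+3$ lower boundary edges'': at face value the polygon has only $n+3$ boundary edges total, split between upper and lower, so one cannot have $n+3$ of each unless one re-examines what the Lemma bijects. The resolution, which I would spell out, is that the statement of the Corollary is about maximal increasing paths \emph{and} maximal decreasing paths jointly realizing all $n+3$ boundary edges — or, read the other way, that counting maximal increasing paths via their images under the bijection of part~(1) together with the fact that the boundary of an $(n+3)$-gon has $n+3$ edges forces the lower boundary to have a number of edges equal to the number of maximal increasing paths, and symmetrically for the upper. Thus the proof reduces to: invoke Lemma~\ref{lemma:maximal_paths_to_boundary_edges}(1)–(2), note that the upper and lower boundary edges partition the $n+3$ boundary edges of the $(n+3)$-gon $P(Q)$, and conclude. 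I expect no genuine obstacle here; the entire content is packaged in the preceding Lemma, and this Corollary is a one-line bookkeeping consequence.
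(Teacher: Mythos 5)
Your final argument is correct and is exactly the paper's proof: the lemma bijects maximal increasing paths with lower boundary edges and maximal decreasing paths with upper boundary edges, and these two families partition the $n+3$ boundary edges of the $(n+3)$-gon, so the joint count is $n+3$. The only problem is the detour in the middle, where you assert that ``the count of maximal increasing paths \emph{alone} is $n+3$, and likewise for maximal decreasing paths'' --- this is a misreading of the statement and is false (for $Q=1\to 2$ there is one maximal increasing path and two maximal decreasing paths, totalling $3=n+3$, as the paper's own Example~\ref{example after corollary A2} shows); you eventually catch this and retract it, but that entire paragraph should simply be deleted, leaving the one-line bookkeeping argument you open and close with.
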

\begin{proof}
 This follows from Lemma \ref{lemma:maximal_paths_to_boundary_edges}, since $P(Q)$ has exactly $n+3$ boundary edges.
\end{proof}

\begin{example}
\label{example after corollary A2}
Let $Q$ be the quiver $1\to 2$, so $n=0$. Then there is exactly one maximal increasing path $1\to 2$, and two maximal decreasing paths, namely the two constant paths $e_1$ and  $e_2$.

If $Q$ is the quiver from Figure \ref{fig:polygon6}, then $n=5$ and there are $n+3=8$ maximal increasing and maximal decreasing paths.
\end{example}

Recall that for each $\alpha\in Q_1$, the hook$(\alpha)$ and cohook$(\alpha)$ are defined to be maximal paths.
Therefore, in type $\mathbb{A}$ each hook and cohook is a maximal increasing or a maximal decreasing path.
We make this precise in the next lemma.

\begin{lemma}
\label{lemma:bij_hooks_cohooks_maximal_inc_dec_paths}
Let  $Q$ be a quiver of type $\mathbb{A}_{n+2}$. Then 
the set 
\[
\bigcup_{\alpha\in Q_1}\left\{H(\alpha),C(\alpha)\right\}
\] 
of all indecomposable representations supported on the hooks and cohooks 
is in bijection with 
\begin{itemize}
\item [\rm (a)]   the set $\{S(1), \dots,S( n+2) \}$ of all simple representations of $Q$, if $Q$ is the linear orientation of $\mathbb{A}_{n+2}$ (all arrows pointing in the same direction);
\item [\rm (b)] the set of maximal increasing paths and maximal decreasing paths, for all other choices of~$Q$.
\end{itemize}
\end{lemma}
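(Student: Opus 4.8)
The plan is to analyze the set $\bigcup_{\alpha \in Q_1}\{H(\alpha), C(\alpha)\}$ by combining the combinatorial description of hooks and cohooks in Definition~\ref{def:hooks_cohooks} with the bijection of Lemma~\ref{lemma:maximal_paths_to_boundary_edges}. First I would dispose of case (a): if $Q$ is linearly oriented, say all arrows point $i \to i+1$, then for the arrow $\alpha \colon i \to i+1$, the maximal path starting at $i$ not using $\alpha$ is the constant path $e_i$, so $H(\alpha) = S(i)$; dually the maximal path ending at $i+1$ not using $\alpha$ is the constant path $e_{i+1}$, so $C(\alpha) = S(i+1)$. Letting $\alpha$ range over the $n+1$ arrows, the hooks produce $S(1), \dots, S(n+1)$ and the cohooks produce $S(2), \dots, S(n+2)$, so together they produce exactly the full set of $n+2$ simples, giving the bijection in (a). (The same works for the opposite linear orientation.)

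For case (b), assume $Q$ is not linearly oriented, so $Q$ has at least one source and one sink in its interior, equivalently both $\overline{[n+1]}$ and $\underline{[n+1]}$ are nonempty. By Definition~\ref{def:hooks_cohooks}, $H(\alpha)$ is supported on a maximal path of $Q$ starting at $s(\alpha)$, hence (since $Q$ is type $\mathbb{A}$) is supported on a maximal increasing path or a maximal decreasing path; similarly $C(\alpha)$ is supported on a maximal path ending at $t(\alpha)$. So the set in question is contained in the set of representations supported on maximal increasing or maximal decreasing paths. I would then argue the reverse inclusion: every maximal increasing path and every maximal decreasing path arises as some $\operatorname{hook}(\alpha)$ or $\operatorname{cohook}(\alpha)$. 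Given a non-constant maximal increasing path $i \to \cdots \to j$ in $Q$, either $i > 1$, in which case there is an arrow $\alpha\colon (i-1) \ot i$ (a source at $i$), and $\operatorname{cohook}$ of the arrow into $i-1$... more cleanly: I would take the arrow $\alpha$ incident to one endpoint that points ``away'' from the path, and check directly that the hook or cohook of that arrow is exactly this path. For a constant maximal increasing path $e_i$, one finds $e_i = \operatorname{hook}(\alpha)$ where $\alpha\colon (i+1) \to i$ is an arrow with $i$ a sink (which exists when the path $e_i$ is maximal increasing but not the full linear case), or $e_i$ occurs as a cohook at a source. The point is that the hypothesis ``$Q$ is not linearly oriented'' guarantees every relevant source/sink needed to realize these paths actually exists as a vertex with an arrow of the required direction.

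Then I would invoke Lemma~\ref{lemma:maximal_paths_to_boundary_edges}, which sets up bijections between maximal increasing paths and lower boundary edges of $P(Q)$, and between maximal decreasing paths and upper boundary edges. Since a representation is determined by its support, and distinct maximal increasing or decreasing paths have distinct supports (they are determined by their endpoints via $\gamma(i-1,j)$ or $\gamma(j-1,i)$, and the lower and upper boundary edges are disjoint when $Q$ is not linearly oriented), the set $\bigcup_{\alpha}\{H(\alpha),C(\alpha)\}$ is in bijection with the set of all maximal increasing and maximal decreasing paths, as claimed in (b).

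The main obstacle I anticipate is the bookkeeping in the reverse inclusion for case (b): one must carefully match each maximal increasing/decreasing path to a specific arrow $\alpha$ and verify, using the precise shape of hooks/cohooks in Figure~\ref{fig:hook_cohook}, that the path is realized, while also handling the boundary subtleties at vertices $1$ and $n+2$ and the constant-path cases. A clean way to organize this is to note that $\operatorname{hook}(\alpha)$ and $\operatorname{cohook}(\alpha)$ are, by their very definition, maximal paths, and that as $\alpha$ ranges over $Q_1$ the starting vertices $s(\alpha)$ of hooks are exactly the non-sinks and the ending vertices $t(\alpha)$ of cohooks are exactly the non-sources; combining these covers all maximal increasing paths (indexed by their sources, plus the sink constant paths picked up as cohooks) and all maximal decreasing paths, and one checks no maximal path is missed precisely because $Q$ is not linearly oriented.
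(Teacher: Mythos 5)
Your overall strategy coincides with the paper's: observe that every hook and cohook is a maximal increasing or decreasing path, dispose of the linearly oriented case by direct computation, and prove the reverse inclusion for non-linear $Q$ by a case analysis on constant versus non-constant maximal paths, using non-linearity to guarantee the required arrow exists. Your treatment of case (a) is in fact slightly more explicit than the paper's (you verify directly that $H(\alpha)=S(i)$ and $C(\alpha)=S(i+1)$, so that all $n+2$ simples are hit), which is fine.

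However, the one subcase you actually work out in (b) is wrong as stated. You claim that a constant maximal increasing path satisfies $e_i=\operatorname{hook}(\alpha)$ for an arrow $\alpha\colon (i+1)\to i$ ``with $i$ a sink.'' First, $\operatorname{hook}(\alpha)$ starts at $s(\alpha)=i+1$, not at $i$, so it can never equal $e_i$; for that arrow you want the \emph{cohook}, which ends at $t(\alpha)=i$. Second, the condition on $i$ is wrong: if $1<i<n+2$ and $e_i$ is a maximal increasing path, then there is no arrow $(i-1)\to i$ and no arrow $i\to(i+1)$, so the local picture is the decreasing path $(i+1)\to i\to(i-1)$ and $i$ is neither a source nor a sink; an interior sink would have the increasing path $(i-1)\to i$ through it, contradicting maximality of $e_i$. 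The correct identifications (as in the paper) are $e_i=\operatorname{hook}(i\to(i-1))=\operatorname{cohook}((i+1)\to i)$ in the interior case, $e_1=\operatorname{cohook}(2\to 1)$ when $1$ is a sink, and $e_{n+2}=\operatorname{hook}((n+2)\to(n+1))$ when $n+2$ is a source. Since this bookkeeping is precisely the substance of the proof of (b) --- the rest being the easy containment and the observation that distinct maximal paths have distinct supports --- you should redo this case analysis carefully; the non-constant case you left as ``take the arrow pointing away from the path'' also needs the split into $i>1$ (hook of the arrow out of the source $i$) versus $i=1$, $j<n+2$ (cohook of the arrow into the sink $j$), which is where the hypothesis that $Q$ is not linearly oriented is actually used.
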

\begin{proof}
We have noted earlier that every hook and every cohook is either a maximal increasing path or a maximal decreasing path. 
To prove (a) it suffices to notice that for a linearly oriented quiver, say, $1\to 2\to\ldots\to n+2$, the maximal decreasing paths are the $n+2$ constant paths,  and the unique maximal increasing path from $1$ to $n+2$ is neither a hook nor a cohook.

To prove (b) it remains to show that if $Q$ is not the linear orientation then every maximal increasing path and every maximal decreasing path is a hook or cohook.
Suppose $c$ is a maximal increasing path in $Q$. Since $Q$ is of type $\mathbb{A}$, either 
 $c$ is constant 
 or $c$ is a (non-constant) maximal path in $Q$. 
 
 First, suppose $c=e_i$ is a maximal increasing path which is constant. 
 Then either $ (i-1) \leftarrow  i \leftarrow  (i+1)$ is a decreasing path in $Q$, or 
$i=1$ and is a sink, 
or $i=n+2$ and  is a source.
In the first situation, $c$ is 
the hook of the arrow $(i-1) \leftarrow i$ in $Q$ 
(and also 
the cohook of the arrow $i \leftarrow (i+1)$ in $Q$). 
If $i=1$  is a sink, then $c$ is the cohook of the arrow $ 1 \leftarrow  2$ in $Q$. 
If $i=n+2$ is a source, then~$c$ is the hook of the arrow $ (n+1)\leftarrow (n+2)$ in $Q$.

Next, suppose c= $ i \to \dots \to  j$ is a non-constant maximal increasing path in $Q$. 
Then, as noted earlier, $c$ must be a maximal path in $Q$, so the vertex 
$ i$ is a source and the vertex $ j$ is a sink of $Q$. 
By assumption, the quiver $Q$ is not the linear orientation $ 1 \to  2 \to \dots \to  {n+2}$. 
Thus if $i>1$ we have an arrow $\alpha\colon(i-1) \leftarrow i$ in $Q$, since $i$ is a source. 
In this case,  
$c$ is the hook of the arrow $\alpha$, as illustrated in the left picture of  Figure~\ref{fig:hook_cohook}.  
On the other hand, if $i=1$ then $j<n+2$ and we have an arrow $\zb\colon{j} \leftarrow (j+1)$ in $Q$, since $j$ is a sink. In this case,  
$c$ is the cohook of the arrow $\zb$ (not illustrated). 

The proof that a maximal decreasing path corresponds to a hook or cohook is similar. 
\end{proof}

\begin{corollary}\label{cor 38}
Let $Q$ be a quiver of type $\mathbb{A}_{n+2}$. 
Then the set 
of all representations supported on the hooks and cohooks 
is in bijection with 

\begin{itemize}
\item [\rm (a) ] the set of all boundary edges of the polygon $P(Q)$  except for the long boundary edge $\gamma(0,n+2)$, if $Q$ is the linear orientation;
\item [\rm (b) ] the set of all boundary edges of the polygon $P(Q)$, for all other choices of $Q$.
\end{itemize}
\end{corollary}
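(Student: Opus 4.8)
The plan is to deduce the corollary by composing the bijections already obtained in Lemma~\ref{lemma:maximal_paths_to_boundary_edges} and Lemma~\ref{lemma:bij_hooks_cohooks_maximal_inc_dec_paths}, together with the elementary observation that the $n+3$ boundary edges of $P(Q)$ split, without overlap, into the lower boundary edges and the upper boundary edges. Indeed, the lower boundary of $P(Q)$ is the path from $0$ to $n+2$ through the vertices $\underline{[n+1]}$ and the upper boundary is the path from $0$ to $n+2$ through the vertices $\overline{[n+1]}$; these two paths share only the endpoints $0$ and $n+2$, so every boundary edge lies on exactly one of them.

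For part (b), assume $Q$ is not linearly oriented. Then $[n+1]$ contains at least one upper-barred and at least one lower-barred vertex, so both the lower and the upper boundary decomposition are nonempty. By Lemma~\ref{lemma:maximal_paths_to_boundary_edges}, $(i\to\cdots\to j)\mapsto\gamma(i-1,j)$ is a bijection from the maximal increasing paths onto the lower boundary edges and $(i\to\cdots\to j)\mapsto\gamma(j-1,i)$ is a bijection from the maximal decreasing paths onto the upper boundary edges. Since no path of $Q$ is simultaneously a maximal increasing and a maximal decreasing path (a non-constant such path has first and last vertices in opposite orders, and a constant path $e_i$ cannot be maximal in both senses, as one of the at most two edges of $Q$ incident to $i$ already forbids one of the two maximality conditions), these combine into a single bijection from the disjoint union of the maximal increasing and maximal decreasing paths onto the full set of boundary edges of $P(Q)$. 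Composing this with the bijection of Lemma~\ref{lemma:bij_hooks_cohooks_maximal_inc_dec_paths}(b) between the representations supported on hooks and cohooks and that disjoint union gives (b).

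For part (a), assume $Q$ is linearly oriented; by the symmetry interchanging increasing/decreasing and lower/upper it suffices to treat $1\to 2\to\cdots\to n+2$. By Lemma~\ref{lemma:bij_hooks_cohooks_maximal_inc_dec_paths}(a) the representations supported on hooks and cohooks are exactly the simples $S(1),\dots,S(n+2)$, and $S(i)=M(i,i)$ is supported on the constant path $e_i$, which (since $Q$ has no arrow $i\to(i-1)$ or $(i+1)\to i$) is a maximal decreasing path; moreover these constant paths exhaust the maximal decreasing paths, while $1\to\cdots\to n+2$ is the unique maximal increasing path. Hence Lemma~\ref{lemma:maximal_paths_to_boundary_edges}(2) sends $S(i)$ to $\gamma(i-1,i)$, and as $i$ ranges over $1,\dots,n+2$ these are precisely the $n+2$ upper boundary edges, that is, all boundary edges except the long lower edge $\gamma(0,n+2)$ --- which by Lemma~\ref{lemma:maximal_paths_to_boundary_edges}(1) is the image of the one maximal increasing path and is not supported by any simple. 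This establishes (a).

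The argument is purely formal once the two lemmas are in hand; the only point requiring a moment's care is the bookkeeping around the edge $\gamma(0,n+2)$ --- namely verifying that in the linear case the single omitted edge is exactly $\gamma(0,n+2)$ for either of the two linear orientations, and that in the non-linear case the two partial bijections glue together with nothing omitted and nothing double-counted, which is what the disjointness of maximal increasing and maximal decreasing paths provides.
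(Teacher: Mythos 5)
Your proposal is correct and follows exactly the paper's route: the published proof simply states that the corollary follows from Lemma~\ref{lemma:bij_hooks_cohooks_maximal_inc_dec_paths} and Lemma~\ref{lemma:maximal_paths_to_boundary_edges}, which is precisely the composition you carry out. The extra bookkeeping you supply (disjointness of maximal increasing and maximal decreasing paths, and the identification of the omitted edge $\gamma(0,n+2)$ in the linear case) is accurate and only makes explicit what the paper leaves to the reader.
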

\begin{proof}
This follows from 
Lemmas~\ref{lemma:bij_hooks_cohooks_maximal_inc_dec_paths}
 and \ref{lemma:maximal_paths_to_boundary_edges}.
\end{proof}

 
\section{A  geometric model for $\rep Q$}\label{sect 4}
In this section, we construct a category of line segments in the polygon $P(Q)$ and prove that it is equivalent to the category $\ind Q$ of indecomposable representations of $Q$. The construction is illustrated in an example in Figures~\ref{fig:ar_quiver} and \ref{fig:ar_quiver_polygon}. 

\begin{figure}
\adjustbox{scale=0.8, center}{
\begin{tikzcd}
[row sep=-0mm,column sep=tiny]
& P_7= 
\begin{matrix}
7 \\[-1mm]
6 
\end{matrix}  
\ar[dr, "h"{mylightblue}] && 
\begin{matrix}
5 \\[-1mm]
4 
\end{matrix}
\ar[dr, "h"{mylightblue}] && 3 \ar[dr, "h"{mylightblue}] && 
2  
\ar[dr, "h"{mylightblue}] && 
1=I_1\\ 
P_6=6 \ar[ur, "h"{blue}] \ar[dr, "h"{mylightblue}] && 
\begin{matrix}\,\,\,\, 5 \,\, 7 \\4 \,\, 6 \end{matrix} 
\ar[ur, "c"{red,swap}] \ar[dr, "h"{mylightblue}] &&  \begin{matrix}3 \; 5 \\4 \end{matrix} 
\ar[ur, "c"{red,swap}] \ar[dr, "h"{mylightblue}] && 
\begin{matrix}2 \\3 \end{matrix} \ar[ur, "c"{red,swap}] \ar[dr, "h"{mylightblue}] && 
\begin{matrix}1 \\2 \end{matrix} \ar[ur, "c"{red,swap}]=I_2 \\
& P_5= \begin{matrix}5 \\4 \,\, 6 \end{matrix} 
\ar[ur, "h"{blue}] \ar[dr, "h"{mylightblue}] && 
\begin{matrix}3 \; 5 \; 7 \\ 4 \; 6 \end{matrix} 
\ar[ur, "c"{red,swap}] \ar[dr, "h"{mylightblue}] && 
\begin{matrix}2 \quad  \\ \, \, \, \, \,   3 \, \, 5 \\ \quad 4 \end{matrix}
\ar[ur, "c"{red,swap}] \ar[dr, "h"{mylightblue}] && 
\begin{matrix}
1 \\[-1mm]
2 \\[-1mm]
3 
\end{matrix} = I_3 
\ar[ur, "c"{red,swap}]  \\
P_4=4 \ar[ur, "h"{blue}] \ar[dr, "h"{mylightblue}] && 
\begin{matrix}3 \;\; 5 \\ \;\;\; \; 4 \;\; 6 \end{matrix} \ar[ur, "h"{blue}]\ar[dr, "h"{mylightblue}] && 
\begin{matrix}2 \quad \quad \\\quad 3 \; \; 5 \; \; 7 \\\quad  4 \; \; 6 \end{matrix} 
\ar[ur, "c"{red,swap}] \ar[dr, "h"{mylightblue}] && 
\begin{matrix}
1 \quad \quad \\[-1mm]
2 \quad \\[-1mm]
\quad 3 \; \; 5 \\[-1mm]
\quad  4 
\end{matrix} = I_4 
\ar[ur, "c"{red,swap}] \ar[dr, "c"{mypink,swap}] \\
& P_3=\begin{matrix}3 \\4 \end{matrix} 
\ar[ur, "h"{blue}] \ar[dr, "h"{mylightblue}] && 
\begin{matrix}
2 \quad  \\[-1mm]
\quad 3 \; \; 5 \\[-1mm]
\quad \quad 4 \;\;  6 
\end{matrix} 
\ar[ur, "h"{blue}] \ar[dr, "h"{mylightblue}] && 
\begin{matrix}
1 \quad \quad \;\; \\[-1mm]
2 \quad \;\; \\[-1mm]
\quad \quad 3 \; \; 5 \;\; 7 \\[-1mm]
\quad \quad   4 \;\; 6 \end{matrix} 
\ar[ur, "c"{red,swap}] \ar[dr, "c"{mypink,swap}] && 
5 = I_5\\
&& 
P_2=\begin{matrix}
2\\[-1mm]
3 \\[-1mm]
4 
\end{matrix} 
\ar[ur, "h"{blue}] \ar[dr, "h"{mylightblue}] && 
\begin{matrix}
1 \quad \quad \;\;  \\[-1mm]
2 \quad \;\;  \\[-1mm] 
\; \; 3 \;\; 5 \\[-1mm] 
\quad \; \; 4 \;\; 6 
\end{matrix} 
\ar[ur, "h"{blue}] \ar[dr, "c"{mypink,swap}] && 
\begin{matrix}5 \, \, 7 \\6 \end{matrix} =I_6 \ar[ur, "c"{red,swap}] \ar[dr, "c"{mypink,swap}]\\
&&& 
P_1=\begin{matrix}
1 \\[-1mm]  
2 \\[-1mm]  
3 \\[-1mm]  
4 
\end{matrix} 
\ar[ur, "h"{blue}]  && \begin{matrix}5 \\6 \end{matrix} \ar[ur, "h"{blue}] && 7 = I_7
\end{tikzcd}
}
\caption{Auslander--Reiten quiver $\zG_{\rep Q}$ of rep $Q$ for the quiver $Q$ in  Figure~\ref{fig:polygon6}. 
}
\label{fig:ar_quiver}

\flushleft
\definecolor{mygreen}{cmyk}{0.5,0,0.5,0.5}
\def\pentascale{0.27}
\def\myyellow{white}
\def\mygray{gray}
\def\myboundaryedge{blue}
\def\myedgecolor{mygreen}
\def\myedgesize{ultra thick}
\tikzset{->-/.style={decoration={
			markings,
			mark=at position #1 with {\arrow{stealth}}},postaction={decorate}}}

\newcommand\mypentagon
{
	\begin{scope}[opacity=0.8,scale=\pentascale]
		\node (0) at (0,0) [fill,circle,inner sep=0.5] {};
		\node (1) at (0.7,1.2) [opacity=1,fill,circle,inner sep=0.5] {};
		\node (2) at (1.8,1.8) [fill,circle,inner sep=0.5] {};
		\node (3) at (3,2) [fill,circle,inner sep=0.5]{}; 
		\node (4) at (4,-1.5) [fill,circle,inner sep=0.5] {};
		\node (5) at (5,1.8) [fill,circle,inner sep=0.5] {};
		\node (6) at (6,-1) [fill,circle,inner sep=0.5] {};
		\node (7) at (6.5,0) [fill,circle,inner sep=0.5] {};
		\draw (0) -- (1) -- (2) -- (3) -- (5) -- (7)-- (6) -- (4) to [bend left=20] (0);
		
		\coordinate (northsource) at (7,2);
		\coordinate (northtarget) at (8.5,3.5);
		\coordinate (southsource) at (7,-2);
		\coordinate (southtarget) at (8.5,-3.5);
		
		\draw (0) node[left] {$0$};
		\draw (1) node[above] {$1$};
		\draw (2) node[above] {$2$};
		\draw (3) node[above] {$3$};
		\draw (4) node[below] {$4$};
		\draw (5) node[above] {$5$};
		\draw (6) node[below] {$6$};
		\draw (7) node[right] {$7$};
	\end{scope}
}
\adjustbox{scale=0.8}{
\begin{tikzpicture}
\tikzstyle{every node}=[font=\tiny]
\matrix[outer sep=0pt,column sep=-10mm,row sep=-0mm,
ampersand replacement=\&]
{
	\&
	\mypentagon
	\draw[->-=0.66,\myboundaryedge,\myedgesize] (5) -- (7);
	\draw (5) node [green,fill,circle,inner sep=1pt] {};
	\draw (7) node [red,fill,circle,inner sep=1pt] {};
	\draw[->] (southsource) -- (southtarget) node [pos=0.2, right, mylightblue] {h};
	\&
	\&
	\mypentagon{\mygray}
	\draw[->-=0.7,\myboundaryedge,\myedgesize] (3) -- (5);
	\draw (3) node [green,fill,circle,inner sep=1pt] {};
	\draw (5) node [red,fill,circle,inner sep=1pt] {};
	\draw[->] (southsource) -- (southtarget) node [pos=0.2, right, mylightblue] {h};
	\&
	\&
	\mypentagon{\mygray}
	
	\draw[->-=1,\myboundaryedge,\myedgesize] (2) -- (3);
	\draw (2) node [green,fill,circle,inner sep=1pt] {};
	\draw (3) node [red,fill,circle,inner sep=1pt] {};
	\draw[->] (southsource) -- (southtarget) node [pos=0.2, right, mylightblue] {h};
	\&
	\&
	\mypentagon{\mygray}
	\draw[->-=1,\myboundaryedge,\myedgesize] (1) -- (2);
	\draw (1) node [green,fill,circle,inner sep=1pt] {};
	\draw (2) node [red,fill,circle,inner sep=1pt] {};
	\draw[->] (southsource) -- (southtarget) node [pos=0.2, right, mylightblue] {h};
	\&
	\&
	\mypentagon{\myyellow}
	\draw[->-=1,\myboundaryedge,\myedgesize] (0) -- (1);
	\draw (0) node [green,fill,circle,inner sep=1pt] {};
	\draw (1) node [red,fill,circle,inner sep=1pt] {};
	\\
	\mypentagon{\mygray}
	\draw[->-=0.66,\myedgecolor,\myedgesize] (5) -- (6);
	\draw (5) node [green,fill,circle,inner sep=1pt] {};
	\draw (6) node [red,fill,circle,inner sep=1pt] {};
	\draw[->] (northsource) -- (northtarget)  node[pos=0.7,left,blue] {h};
	\draw[->] (southsource) -- (southtarget) node [pos=0.2, right, mylightblue] {h};
	\&
	\&
	\mypentagon{\mygray} 
	
	\draw[->-=0.66,\myedgecolor,\myedgesize] (3) -- (7);
	\draw (3) node [green,fill,circle,inner sep=1pt] {};
	\draw (7) node [red,fill,circle,inner sep=1pt] {};
	\draw[->] (northsource) -- (northtarget) node[pos=0.2,right,red] {c};
	\draw[->] (southsource) -- (southtarget) node [pos=0.2, right, mylightblue] {h};
	\&
	\&
	\mypentagon{\mygray}
	
	\draw[->-=0.66,\myedgecolor,\myedgesize] (2) to [bend right=60] (5);
	\draw (2) node [green,fill,circle,inner sep=1pt] {};
	\draw (5) node [red,fill,circle,inner sep=1pt] {};
	\draw[->] (northsource) -- (northtarget) node[pos=0.2,right,red] {c};
	\draw[->] (southsource) -- (southtarget) node [pos=0.2, right, mylightblue] {h};
	\&
	\&
	\mypentagon{\mygray}
	
	\draw[->-=0.66,\myedgecolor,\myedgesize] (1) to [bend right=60] (3);
	\draw (1) node [green,fill,circle,inner sep=1pt] {};
	\draw (3) node [red,fill,circle,inner sep=1pt] {};
	\draw[->] (northsource) -- (northtarget) node[pos=0.2,right,red] {c};
	\draw[->] (southsource) -- (southtarget) node [pos=0.2, right, mylightblue] {h};
	\&
	\&
	\mypentagon{\myyellow}
	\draw[->-=0.66,\myedgecolor,\myedgesize] (0) to [bend right=60] (2);
	\draw (0) node [green,fill,circle,inner sep=1pt] {};
	\draw (2) node [red,fill,circle,inner sep=1pt] {};
	\draw[->] (northsource) -- (northtarget) node[pos=0.2,right,red] {c};
	\&
	\&
	\&
	\&
	\\
	\&
	\mypentagon{\myyellow}
	\draw[->-=0.66,\myedgecolor,\myedgesize] (3) to (6);
	\draw (3) node [green,fill,circle,inner sep=1pt] {};
	\draw (6) node [red,fill,circle,inner sep=1pt] {};
	\draw[->] (northsource) -- (northtarget) node[pos=0.7,left,blue] {h};
	\draw[->] (southsource) -- (southtarget) node [pos=0.2, right, mylightblue] {h};
	\&
	\&
	\mypentagon{\mygray}
	\draw[->-=0.66,\myedgecolor,\myedgesize] (2) to [bend right=30] (7);
	\draw (2) node [green,fill,circle,inner sep=1pt] {};
	\draw (7) node [red,fill,circle,inner sep=1pt] {};
	\draw[->] (northsource) -- (northtarget) node[pos=0.2,right,red] {c};
	\draw[->] (southsource) -- (southtarget) node [pos=0.2, right, mylightblue] {h};
	\&
	\&
	\mypentagon{\mygray}
	\draw[->-=0.66,\myedgecolor,\myedgesize] (1) to [bend right=60] (5);
	\draw (1) node [green,fill,circle,inner sep=1pt] {};
	\draw (5) node [red,fill,circle,inner sep=1pt] {};
	
	\draw[->] (northsource) -- (northtarget) node[pos=0.2,right,red] {c};
	\draw[->] (southsource) -- (southtarget) node [pos=0.2, right, mylightblue] {h};
	\&
	\&
	\mypentagon{\myyellow}
	\draw[->-=0.66,\myedgecolor,\myedgesize] (0) to (3);
	\draw (0) node [green,fill,circle,inner sep=1pt] {};
	\draw (3) node [red,fill,circle,inner sep=1pt] {};
	\draw[->] (northsource) -- (northtarget) node[pos=0.2,right,red] {c};
	\&
	\&
	\&
	\&
	
	\\
	\mypentagon{\myyellow}
	\draw[->-=0.66,\myedgecolor,\myedgesize] (3) to (4);
	\draw (3) node [green,fill,circle,inner sep=1pt] {};
	\draw (4) node [red,fill,circle,inner sep=1pt] {};
	\draw[->] (northsource) -- (northtarget) node[pos=0.7,left,blue] {h};
	\draw[->] (southsource) -- (southtarget) node [pos=0.2, right, mylightblue] {h};
	\&
	\&
	\mypentagon{\mygray}
	\draw[->-=0.66,\myedgecolor,\myedgesize] (2) to (6);
	\draw (2) node [green,fill,circle,inner sep=1pt] {};
	\draw (6) node [red,fill,circle,inner sep=1pt] {};
	\draw[->] (northsource) -- (northtarget) node[pos=0.7,left,blue] {h};
	\draw[->] (southsource) -- (southtarget) node [pos=0.2, right, mylightblue] {h};
	\&
	\&
	\mypentagon{\myyellow}
	\draw[->-=0.66,\myedgecolor,\myedgesize] (1) to (7);
	\draw (1) node [green,fill,circle,inner sep=1pt] {};
	\draw (7) node [red,fill,circle,inner sep=1pt] {};
	\draw[->] (northsource) -- (northtarget) node[pos=0.2,right,red] {c};
	\draw[->] (southsource) -- (southtarget) node [pos=0.2, right, mylightblue] {h};
	\&
	\&
	\mypentagon
	\draw[->-=0.66,\myedgecolor,\myedgesize] (0) to (5);
	\draw (0) node [green,fill,circle,inner sep=1pt] {};
	\draw (5) node [red,fill,circle,inner sep=1pt] {};
	\draw[->] (northsource) -- (northtarget) node[pos=0.2,right,red] {c};
	\draw[->] (southsource) -- (southtarget) node [pos=0.7, left, mypink] {c};
	\&
	\&
	\&
	\&
	\\
	\&
	\mypentagon{\myyellow}
	\draw[->-=0.66,\myedgecolor,\myedgesize] (2) to (4);
	\draw (2) node [green,fill,circle,inner sep=1pt] {};
	\draw (4) node [red,fill,circle,inner sep=1pt] {};
	\draw[->] (northsource) -- (northtarget) node[pos=0.7,left,blue] {h};
	\draw[->] (southsource) -- (southtarget) node [pos=0.2, right, mylightblue] {h};
	\&
	\&
	\mypentagon{\myyellow}
	\draw[->-=0.66,\myedgecolor,\myedgesize] (1) to (6);
	\draw (1) node [green,fill,circle,inner sep=1pt] {};
	\draw (6) node [red,fill,circle,inner sep=1pt] {};
	\draw[->] (northsource) -- (northtarget) node[pos=0.7,left,blue] {h};
	\draw[->] (southsource) -- (southtarget) node [pos=0.2, right, mylightblue] {h};
	\&
	\&
	\mypentagon{\myyellow}
	\draw[->-=0.66,\myedgecolor,\myedgesize] (0) to (7);
	\draw (0) node [green,fill,circle,inner sep=1pt] {};
	\draw (7) node [red,fill,circle,inner sep=1pt] {};
	\draw[->] (northsource) -- (northtarget) node[pos=0.2,right,red] {c};
	\draw[->] (southsource) -- (southtarget) node [pos=0.7, left, mypink] {c};
	\&
	\&
	\mypentagon
	\draw[->-=0.66,\myedgecolor,\myedgesize] (4) to (5);
	\draw (4) node [green,fill,circle,inner sep=1pt] {};
	\draw (5) node [red,fill,circle,inner sep=1pt] {};
	\&
	\&
	\&
	\&
	\\
	\&
	\&
	\mypentagon
	\draw[->-=0.66,\myedgecolor,\myedgesize] (1) to (4);
	\draw (1) node [green,fill,circle,inner sep=1pt] {};
	\draw (4) node [red,fill,circle,inner sep=1pt] {};
	\draw[->] (northsource) -- (northtarget) node[pos=0.7,left,blue] {h};
	\draw[->] (southsource) -- (southtarget) node [pos=0.2, right, mylightblue] {h};
	\&
	\&
	\mypentagon
	\draw[->-=0.66,\myedgecolor,\myedgesize] (0) to [bend left=10] (6);
	\draw (0) node [green,fill,circle,inner sep=1pt] {};
	\draw (6) node [red,fill,circle,inner sep=1pt] {};
	\draw[->] (northsource) -- (northtarget) node[pos=0.7,left,blue] {h};
	\draw[->] (southsource) -- (southtarget) node [pos=0.7, left, mypink] {c};
	\&
	\&
	\mypentagon
	\draw[->-=0.66,\myedgecolor,\myedgesize] (4) to [bend left=10] (7);
	\draw (4) node [green,fill,circle,inner sep=1pt] {};
	\draw (7) node [red,fill,circle,inner sep=1pt] {};
	\draw[->] (northsource) -- (northtarget) node[pos=0.2,right,red] {c};
	\draw[->] (southsource) -- (southtarget) node [pos=0.7, left, mypink] {c};
	\\
	\&
	\&
	\&
	\mypentagon
	\draw[->-=0.66,\myboundaryedge,\myedgesize] (0) to [bend right=20] (4);
	\draw (0) node [green,fill,circle,inner sep=1pt] {};
	\draw (4) node [red,fill,circle,inner sep=1pt] {};
	\draw[->] (northsource) -- (northtarget) node[pos=0.7,left,blue] {h};
	\&
	\&
	\mypentagon
	\draw[->-=0.66,\myboundaryedge,\myedgesize] (4) to (6);
	\draw (4) node [green,fill,circle,inner sep=1pt] {};
	\draw (6) node [red,fill,circle,inner sep=1pt] {};
	\draw[->] (northsource) -- (northtarget) node[pos=0.7,left,blue] {h};
	\&
	\&
	\mypentagon
	\draw[->-=1,\myboundaryedge,\myedgesize] (6) to (7);
	\draw (6) node [green,fill,circle,inner sep=1pt] {};
	\draw (7) node [red,fill,circle,inner sep=1pt] {};
	\\
};
\end{tikzpicture}
}
\caption{Translation quiver of the category $\cc$ for the quiver $Q$ in Figure~\ref{fig:polygon6}. }
\label{fig:ar_quiver_polygon}
\end{figure}

\subsection{The category  of line segments $\cc$}
In this subsection, we construct a category $\cc$ whose   objects are line segments in the polygon $P(Q)$ and whose morphisms are generated by pivots of line segments modulo mesh relations. 

\subsubsection{Translation quivers and mesh categories}
We start by reviewing the notions of translation quiver and mesh category, from \cite{R, H}.
Recall that for any quiver $\zG$, we denote the set of vertices by $\zG_0$ and the set of arrows by $\zG_1$.  A \emph{loop} is an arrow that starts and ends at the same vertex.

A {\em  translation
 quiver} $(\zG,\tau)$ is a quiver $\zG=(\zG_0,\zG_1)$ without loops
 together with an injective map $\tau\colon \zG_0'\to\zG_0$  (the {\em translation}) from a subset $\zG_0'$ of $\zG_0$ to $\zG_0$ such that, for all vertices $x\in\zG_0'$, $y\in \zG_0$, 
the number of arrows from $y \to x$ is equal to the number of arrows
 from $\tau x\to y$.
Given a  translation quiver $(\zG,\tau)$, a \emph{polarization of} $\zG$ is
 an injective map $\sigma:\zG_1'\to\zG_1$, where $\zG_1'$ is the set of all arrows $\za\colon y\to x$ 
 with $x \in \zG_0'$, such that 
$\sigma(\za)\colon\tau x\to y$  for every arrow $\za\colon y\to x\in \zG_1$.

From now on we assume that $\zG$ has no multiple arrows. In that case, there is a unique polarization of $\zG$.

The {\em path category } of a translation quiver $(\zG,\tau)$ is the category whose  objects are
the vertices $\zG_0$ of $\zG$, and, given $x,y\in\zG_0$, the $\kb$-vector space of
morphisms from $x$ to $y$ is given by the $\kb$-vector space with basis
the set of all paths from $x $ to $y$. The composition of morphisms is
induced from the usual composition of  paths. 
The {\em mesh ideal} in the path category of $\zG$ is the ideal
generated by the {\em mesh relations}
\begin{equation}\nonumber
m_x =\sum_{\za:y\to x} \sigma(\za) \za.
\end{equation}  
for all $x \in \zG_0'$

For example, in the Auslander--Reiten quiver in Figure \ref{fig:ar_quiver}, we have the commutativity relation
\[m_{\begin{smallmatrix}
 3\ 5\\\ 4\ 6
\end{smallmatrix}}
= (\begin{smallmatrix}
4
\end{smallmatrix}
\to 
\begin{smallmatrix}
 5\\4\ 6
\end{smallmatrix}
\to
\begin{smallmatrix}
 3\ 5\\\ 4\ 6
\end{smallmatrix})
+
(\begin{smallmatrix}
4
\end{smallmatrix}
\to 
\begin{smallmatrix}
 3\\4
\end{smallmatrix}
\to
\begin{smallmatrix}
 3\ 5\\\ 4\ 6
\end{smallmatrix})\]
and the zero relation
$ m_2=(\begin{smallmatrix}
3
\end{smallmatrix}
\to 
\begin{smallmatrix}
2\\3
\end{smallmatrix}
\to
\begin{smallmatrix}
2
\end{smallmatrix}).$

The {\em  mesh category } $\calm (\zG,\tau)$ of $(\zG,\tau)$ is the
quotient of the path 
category of $(\zG,\tau)$ by the mesh ideal.

\begin{example}
 If $Q$ is a quiver of Dynkin type  then its Auslander--Reiten quiver $\zG_{\rep Q}$ together with the Auslander--Reiten translation $\tau$ is a translation quiver, where $\zG_0'$ is the set of all non-projective indecomposable representations. In this case, the mesh category $\calm(\zG_{\rep Q},\tau)$ is equivalent to the category $\ind Q$, and the additive closure of $\calm(\zG_{\rep Q},\tau)$ is equivalent to $\rep Q$.
\end{example}

\subsubsection{Definition of $\cc$}\label{sect cat}

We define  the set $\cale$ of all line segments $\zg(i,j)$ of $P(Q)$ as follows
\begin{equation}
\label{eq cale} 
\cale =\{\zg(i,j)\mid 0\le i<j\le n+2\},\end{equation}
 where $\zg(i,j)$ denotes the line segment between the vertices $i$ and $j$. 
 Notice that $\cale$ contains all boundary edges and all diagonals of $P(Q)$. 
 Also notice that our line segments are oriented by our choice of labeling, and that $\zg(i,j)\in\cale $ if and only if $\zg(j,i)\notin \cale$.
 
 For any vertex $\ell$ of $P(Q)$, denote by $R(\ell)$ and by $R^{-1}(\ell)$, respectively, the clockwise and counterclockwise neighbor of $\ell$ on the boundary of $P(Q)$.

\begin{definition}\label{def cale}
Let $\zg(i,j)\in \cale$. The line segment $\zg(i,R^{-1}(j))$ is called a \emph{pivot} of $\zg(i,j)$ if it lies in the set $\cale$, that is, if $i<R^{-1}(j)$. Similarly, the line segment  $\zg(R^{-1}(i),j)$ is called a \emph{pivot} of $\zg(i,j)$ if it lies in the set $\cale$, that is, if $R^{-1}(i)<j$. 
 \end{definition}
Thus a pivot of a line segment $\zg$ is given by fixing one of the endpoints of $\zg$ while moving the other endpoint to its counterclockwise neighbor.

\begin{remark} Notice that a line segment $\zg(i,j)$ may have two, one or zero pivots. 
 In the example in Figure \ref{fig:ar_quiver_polygon},
 the line segment $\zg(3,4)$ has the two pivots $\zg(2,4)$ and $\zg(3,6)$. On the other hand, the line segment $\zg(0,4)$ has only one pivot $\zg(0,6)$, and the line segment $\zg(4,5)$ has no pivots at all.
\end{remark}

 We now define a translation quiver $(\ga,R)$. The vertices of the quiver are the line segments in 
 $\cale$. There is an arrow $\zg(i,j)\to\zg(i',j')$ if and only if $\zg(i',j')$ is a pivot of $\zg(i,j)$, thus if and only if
 $i'<j'$ and $(i',j')=(R^{-1}(i),j)$ or 
 $(i',j')=(i,R^{-1}(j))$. 
 Finally, the translation $R$ is defined by 
 \begin{equation}\label{eq 42new}
R(\zg(i,j))=\left\{
\begin{array}{cl}
\zg(R(i),R(j)) &\textup{if $R(i)<R(j)$};\\
0&\textup{otherwise.}
\end{array}\right.
\end{equation}
Thus $R$ acts on $\cale$ by rotation.

The following is the main definition of this section.

\begin{definition}\label{def cc}
 Let $\cc$ be the mesh category of the translation quiver $(\ga,R)$. We call $\cc$ the \emph{category of line segments} of $P(Q)$.
\end{definition}

\subsection{The functor $F\colon\cc\to\ind Q$}
In this subsection, we construct an equivalence of categories between $\cc$ and $\ind  Q$.

\begin{definition}
Let $F\colon\cc\to\textup{ind}\, Q$ be the functor defined as follows.
On {\em objects}, let \[F(\gamma(i,j))=M(i+1,j).\] 

To define $F$ on  {\em morphisms}, it suffices to define it on the pivots introduced in Definition~\ref{def cale}.
Define $F\left(\zg(i,j)\to \zg(R^{-1}(i),j)\right) $ to  be the irreducible morphism $M(i+1,j)\to M(R^{-1}(i) +1 , j)$ given by \begin{itemize}
\item [(a)] adding the hook corresponding to the boundary edge $\zg(R^{-1}(i),i)$, if $R^{-1}(i)<i$;
\item[(b)]  removing the cohook corresponding to the boundary edge $\zg(i,R^{-1}(i))$, if $i<R^{-1}(i)$.

\end{itemize}
Similarly, 
let $F\left(\zg(i,j)\to \zg(i,R^{-1}(j))\right) $ be the irreducible morphism $M(i+1,j)\to M(i+1,R^{-1}(j) )$ given by \begin{itemize}
\item [(a)] adding the hook corresponding to the boundary edge $\zg(j,R^{-1}(j))$, if $j<R^{-1}(j)$;
\item[(b)]  removing the cohook corresponding to the boundary edge $\zg(R^{-1}(j),j)$, if $R^{-1}(j)<j$.

\end{itemize}
\end{definition}

We are now ready for our first main result.

\begin{thm}\label{thm 1}
 The functor $F$ is an equivalence of categories
 \[F\colon \cc\to\ind Q.\]
 In particular,
 \begin{enumerate}
 \item $F$ induces an isomorphism of translation quivers 
 $\left(\ga, R \right) \to \left(\zG_{\rep Q}, \tau \right)$; 
\item  $F$ induces bijections
 \[\begin{array}{rcl}
 \{ \textup{line segments in $P(Q)$}\}&\to& \ind Q;\\
\{ \textup{pivots  in $P(Q)$}\} &\to& \{\textup{irreducible morphisms in $Q$}\};
\end{array}\]
\item the rotation $R$ corresponds to the Auslander--Reiten translation $\tau$ in the following sense
\[F\circ R= \tau \circ F;\]
\item $F$ is an exact functor with respect to the induced abelian structure on $\cc$.
\end{enumerate}
\end{thm}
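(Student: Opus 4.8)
\emph{Overview and the object bijection.} The plan is to show that $F$, on the level of its underlying combinatorial data, is an isomorphism of translation quivers $(\ga,R)\to(\zG_{\rep Q},\tau)$, and to deduce from this both the equivalence of mesh categories and items (1)--(4). First I would note that $\zg(i,j)\mapsto M(i+1,j)$ is a bijection from $\cale$ onto the set of indecomposable representations of $Q$: by Proposition~\ref{prop gabriel} these are the $M(a,b)$ with $1\le a\le b\le n+2$, and $(i,j)\mapsto(i+1,j)$ is a bijection between $\{(i,j):0\le i<j\le n+2\}$ and $\{(a,b):1\le a\le b\le n+2\}$. This yields the first bijection in~(2).

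\emph{Arrows and the translation.} This is the technical heart. I would check that $F$ sends pivots bijectively to irreducible morphisms between indecomposables, preserving sources and targets, and that it intertwines $R$ with $\tau$. A line segment $\zg(i,j)$ may have a pivot fixing $j$ and moving $i$ counterclockwise and/or one fixing $i$ and moving $j$ counterclockwise; in each case the endpoint being moved lands either on the upper or on the lower part of $\partial P(Q)$, and the boundary edge it sweeps out is, by Lemma~\ref{lemma:maximal_paths_to_boundary_edges} and Corollary~\ref{cor 38}, exactly the hook or the cohook occurring in one of the four cases (1)(a), (1)(b), (2)(a), (2)(b) of Proposition~\ref{prop:4possibilities}. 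Running through these cases --- which are governed by the barring of the vertices $i,R^{-1}(i),j,R^{-1}(j)$ --- shows that the definition of $F$ on morphisms produces precisely the (one or two) irreducible morphisms out of $M(i+1,j)$; since Proposition~\ref{prop:4possibilities} says every irreducible morphism between indecomposables arises this way, a count gives the bijection in the second line of~(2). For the translation I would use the known form of the Auslander--Reiten translation in type $\mathbb{A}$ (equivalently, the realisation of $\zG_{\rep Q}$ inside $\mathbb{Z}\mathbb{A}_{n+2}$ with its canonical translation) to verify $\tau M(i+1,j)=M(R(i)+1,R(j))$ when $R(i)<R(j)$, with $M(i+1,j)$ projective exactly when $R(\zg(i,j))=0$; this is the identity $F\circ R=\tau\circ F$ of~(3), and, together with the arrow bijection, it shows that $F$ is an isomorphism of translation quivers, i.e.\ item~(1).

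\emph{Passage to categories and exactness.} An isomorphism of translation quivers without multiple arrows induces an isomorphism of path categories carrying mesh relations to mesh relations, hence an isomorphism $\cc=\calm(\ga,R)\xrightarrow{\ \sim\ }\calm(\zG_{\rep Q},\tau)$; in particular $F$ annihilates the mesh ideal, so it is a well-defined functor on $\cc$. Composing with the standardness equivalence $\calm(\zG_{\rep Q},\tau)\simeq\ind Q$ recalled in the example above, and checking that the composite agrees with $F$ on objects and on the generating arrows, gives the equivalence $F\colon\cc\to\ind Q$. For~(4): by the same example the additive closure of $\calm(\zG_{\rep Q},\tau)$ is equivalent to $\rep Q$, so extending $F$ additively yields an equivalence $\add\cc\to\rep Q$; transporting the abelian structure of $\rep Q$ along this equivalence is, by definition, the induced abelian structure on $\cc$, and an equivalence of abelian categories is exact, so $F$ is exact.

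\emph{Main obstacle.} I expect the real work to lie in the middle step --- the uniform bookkeeping needed to match the four pivot types with the four irreducible-morphism types of Proposition~\ref{prop:4possibilities}, and the rotation $R$ with $\tau$, for every orientation of $\mathbb{A}_{n+2}$ and every position of the line segment: interior diagonals, ordinary boundary edges, and the boundary cases corresponding to projective and injective representations.
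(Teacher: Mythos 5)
Your proposal is correct and follows essentially the same route as the paper: the object bijection via Proposition~\ref{prop gabriel}, the pivot/irreducible-morphism bijection via Proposition~\ref{prop:4possibilities} together with Corollary~\ref{cor 38}, compatibility of $R$ with $\tau$ from the known structure of type~$\mathbb{A}$ Auslander--Reiten quivers, and then the mesh-category formalism (standardness) plus exactness of equivalences of abelian categories. The only cosmetic difference is that the paper verifies the translation by composing two pivots and identifying the target as $\tau^{-1}$ of the source, while you invoke the $\mathbb{Z}\mathbb{A}_{n+2}$ formula directly; these rest on the same background fact.
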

\begin{proof}
 Proposition \ref{prop gabriel} implies that the indecomposable representations are of the form $M(i,j)$ with $1\le i\le j\le n+2$. On the other hand, the line segments $\zg(i,j)\in \cale$ are parametrized by $0\le i<j\le n+2$ by equation (\ref{eq cale}). Thus $F$ is a bijection between the objects of the categories $\cc$ and $\ind Q$
and thus a bijection between the vertices of the translation quivers.

Proposition \ref{prop:4possibilities} 
combined with Corollary \ref{cor 38} shows that $F$ is a bijection between pivots and irreducible morphisms, hence a bijection between the arrows of the translation quivers. Moreover, $f\colon\zg(i,j)\to\zg(i',j')$ is an arrow in $\ga$ if and only if $F(f)\colon M(i+1,j)\to M(i'+1,j') $ is an arrow in $\zG_{\rep Q}$. Thus $F$ is an isomorphism of quivers $\ga\to\zG_{\rep Q}$.

Next we check that the translations of the two quivers correspond to each other. Let $\zg(i,j)\in \cale$ and assume that $R^{-1}(\zg(i,j))\in \cale$ as well. Note that
\begin{equation}
\label{eq 45}
 F(R^{-1}(\zg(i,j))) = F(\zg(R^{-1}(i),R^{-1}(j))) = M(R^{-1}(i)+1,R^{-1}(j)).
 \end{equation}
 Since $R^{-1}(\zg(i,j))\in \cale$ then $\zg(i,j)$ has at least one pivot, see Definition~\ref{def cale}. Without loss of generality, we may assume that $\zg(i,j)\to\zg(R^{-1}(i),j)$ is a pivot. 
 Then $\zg(R^{-1}(i),j)\to \zg(R^{-1}(i),R^{-1}(j))$ is a pivot too.
 Under $F$, the composition of these two pivots is mapped to the composition of two irreducible morphisms
 \[M(i+1,j)\to M(R^{-1}(i) +1,j)\to M(R^{-1}(i)+1,R^{-1}(j)).\]
Since $i\ne R^{-1}(i)$ and $j\ne R^{-1}(j)$, it follows from the structure of the Auslander--Reiten quivers of type $\mathbb{A}$ that the last representation in this sequence is the inverse Auslander--Reiten translate of the first, thus
\[M(R^{-1}(i)+1,R^{-1}(j))=\tau^{-1} M(i+1,j)=\tau^{-1} F(\zg(i,j)).\]
Now Equation (\ref{eq 45}) yields $F(R^{-1}(\zg(i,j))) =\tau^{-1} F(\zg(i,j)).$ This proves (1),(2) and (3). 

Since both categories $\cc$ and $\ind Q$ are the mesh categories of their translation quivers $\ga$ and $\zG_{\rep Q}$, statement (1) implies that $F$ is an equivalence of categories. In particular, this equivalence induces the structure of an abelian category on $\cc$. With respect to this structure~$F$ is exact, since every equivalence between abelian categories is exact.
\end{proof}

\begin{remark}
 In \cite{CCS}, the authors defined a category of unoriented diagonals (no boundary edges) in an $(n+3)$-gon, whose morphisms are also generated by pivots modulo mesh relations. That construction yields the cluster category of type $\mathbb{A}_n$. Our construction here is different in the following sense. First, we also include boundary edges, and our quiver is of rank $n+2$. 
 Second, our line segments are oriented, which implies that certain pivots that are allowed in the cluster category are not allowed in the category $\cc$. This is the reason why $\cc$ is a hereditary abelian category, while the cluster category is triangulated and not abelian.
 
 We investigate the relation between the two constructions further in section \ref{sect 6}.
\end{remark}


\section{Stability function}\label{sect stab}
Stability conditions were introduced in \cite{Schofield, King, Rudakov, Bridgeland} and have important applications in algebraic geometry and representation theory. We recall the definition in the setting of the category $\rep Q$ of finite-dimensional representations of a quiver $Q$.

Let $K_0$ denote the Grothendieck group of $\rep Q$. For $M\in \rep Q$, we denote its class in $K_0$ by~$[M]$. Thus for $M,N\in\rep Q$, we have $[M]=[N]$ if and only if $M$ and $N$ have the same dimension vector. $K_0$ has a basis given by the classes of the simple representations $S(x)$, $x\in Q_0$. 

\begin{definition}
(1) A \emph{central charge} is a group homomorphism $Z\colon K_0\to \mathbb{C}$ such that for all nonzero representations $M$ the complex number $Z([M])$ lies in the strict right half plane\footnote{ \cite{Bridgeland} uses the strict \emph{upper} half plane} \[\mathbb{H}=\{r\,e^{i\pi\phi}\mid r>0\textup{ and }\textstyle -\frac{1}{2}<\phi <\frac{1}{2}\}.\]
 
(2) Given a central charge $Z$, we obtain an associated \emph{stability function} \[\phi(M)=\frac{1}{\pi}\, \textup{arg}( Z([M])).\]

(3) Given a stability function $\phi$, a nonzero representation $M$ is called \emph{$\phi$-stable} if  every nonzero proper subrepresentation $L\subsetneq M$ satisfies $\phi(L)<\phi(M)$.

\end{definition}

\begin{remark}
(1) Since $Z$ is a group homomorphism it is determined by its values on the classes of simple representations.

(2) If $M$ is $\phi$-stable then $M$ is indecomposable. Indeed, since direct summands are subrepresentations, this follows directly from the additivity of the central charge.
\end{remark}

It is natural to ask if there exist stability functions for which all indecomposable representations are stable. Reineke conjectured in \cite[Conjecture 7.1]{Reineke} that such a stability function exists for every Dynkin quiver. In the same paper, he gave a stability function for the linearly oriented type $\mathbb{A}$ quiver. Apruzzese and Igusa proved the conjecture for all type $\mathbb{A}$ quivers in \cite{AI}, and, very recently, Kinser described all such stability functions for type $\mathbb{A}$ in \cite{Kinser}, see also \cite{Qiu15, Qiu, QZ22}.

 We are going to show that our geometric model also provides such a stability function  for every quiver of Dynkin type $\mathbb{A}$. The stability function is given via the functor $F$ of Theorem \ref{thm 1} simply by the slope of the oriented line segment of an indecomposable representation.

From now on let $Q$ be a quiver of type $\mathbb{A}_{n+2}$ and let $P(Q)$ be the $(n+3)$-gon constructed in Section \ref{sect 2}. Let $\cale$ be the set of oriented line segments 
\[\cale=\{\zg(j,k)\mid 0\le j<k\le n+2\}\]
as in section~\ref{sect 4}. 
Recall that $\zg(j,k)$ denotes the oriented line segment  from vertex $j$ to vertex $k$ of the polygon $P(Q)$.
Since the vertices of $P(Q)$ are points in the plane, we can define
a map $\textup{vec}\colon\cale\to \mathbb{C}, \zg(j,k)\mapsto \vec{\zg}(j,k)$, where  $\vec{\zg}(j,k)$ is the complex number $re^{i\theta}$  given by 
the
vector 
with the same direction and magnitude as the oriented line segment 
from point $j$ to point $k$.

Let $F\colon \cale\to \ind Q$ be 
the bijection of Theorem \ref{thm 1} and denote its inverse by $G$. Furthermore, let $\vec{G}=\textup{vec}\circ G$ denote its composition with the map defined above. Thus, for a representation $M=M(j,k)$ we have 
\[\vec{G}(M)=\vec{\zg}(j-1,k)= r(M) \,e^{i\theta(M)},\] 
 where 
$r(M)$ is the length of the vector $\vec{G}(M)=\vec{\zg}(j-1,k)$ and $\theta(M) $ is the angle from the positive real axis to the vector $\vec{G}(M)$ in the complex plane. An example is given in Figure~\ref{fig a1}.

Then $\vec{G}$ induces a group homomorphism
$Z\colon K_0\to \mathbb{C}$
defined on the basis of simples by $Z([S(x)])=\vec{\zg}(x-1,x)$ and extended additively to all of $K_0$. By definition, we have 
\[Z([M(j,k)])= Z\left(\sum_{x=j}^k \,[S(x)]\right) = \sum_{x=j}^k \vec{\zg}(x-1,x)=\vec{\zg}(j-1,k),\]
where the last identity is the addition in $\mathbb{C}$.  Thus for every indecomposable representation $M$, we have 
\begin{equation}
 \label{eq A1} 
 Z([M])=\vec{G}(M)= r(M) \, e^{{i\theta(M)}}.
\end{equation}
Note that $Z$ maps every representation of $Q$ to the strict right half plane, and thus $Z$ is a central charge.  
The corresponding stability function $\phi$ is given by
 \[ \phi(M)=\frac{1}{\pi}\theta(M)\in\textstyle \left(-\frac{1}{2},\frac{1}{2}\right).\]

\begin{figure}
\begin{center}
\scalebox{0.5}{\LARGE 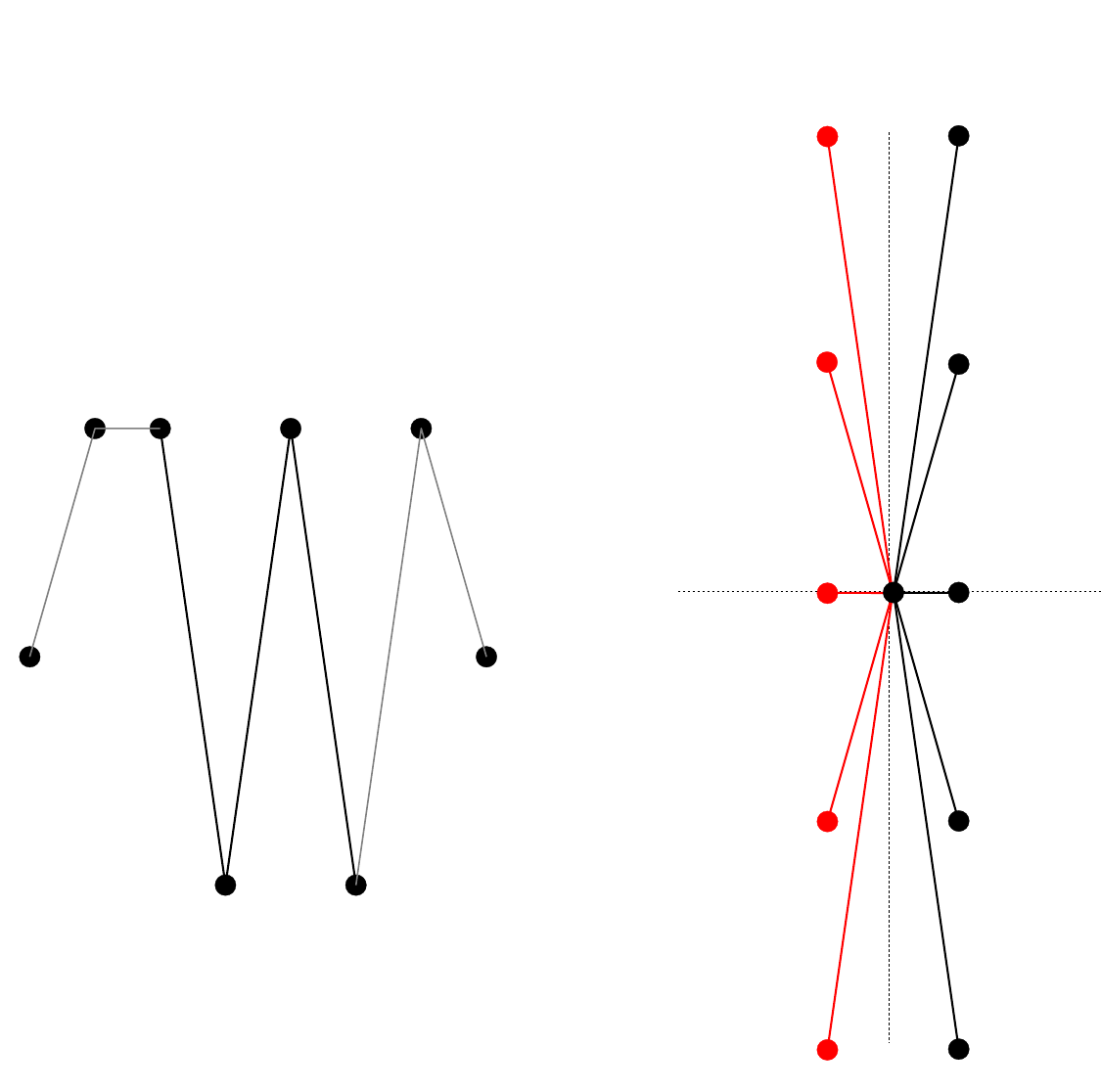}
 \caption{Line segments of the form $\zg(x,x+1)$ in the polygon $P(Q)$ (left); the corresponding central charges of the simple representations in black as well as their first shifts in the derived category in red (right)}
 \label{fig a1}
\end{center}
\end{figure}

Now, for every nonzero proper subrepresentation $L\subsetneq M$, the inclusion is a nonzero morphism in $\Hom(L,M)$. Thus the slope of $G(L)$ is strictly smaller than the slope of $G(M)$, by the same argument as in the proof of Lemma~\ref{lem 7}. Therefore $\phi(L)<\phi(M)$ and it follows that $M$ is $\phi$-stable. We have shown the following.
\begin{thm}
 \label{thm A1}
 Let $Q$ be a Dynkin quiver of type $A$. Then the function that associates to every indecomposable representation $M$ the angle of the corresponding oriented line segment $G(M)$ is a stability function for which every indecomposable representation of $Q$ is stable. 
\end{thm}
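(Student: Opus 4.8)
The plan is to verify the two conditions in the definition of a stability function: first, that $Z$ is genuinely a central charge (i.e., maps every nonzero representation into the strict right half plane $\mathbb{H}$), and second, that every indecomposable $M$ satisfies $\phi(L) < \phi(M)$ for every nonzero proper subrepresentation $L \subsetneq M$. The first condition follows from the geometry of $P(Q)$: by construction the vertices $0, \dots, n+2$ are drawn from left to right, so for $0 \le j < k \le n+2$ the oriented line segment $\vec{\zg}(j,k)$ always points strictly to the right, i.e. has positive real part, hence lies in $\mathbb{H}$. Since $Z([M(j,k)]) = \vec{\zg}(j-1,k)$ by the additivity computation already carried out in the excerpt, and every representation is a direct sum of indecomposables $M(j,k)$ whose central charges all lie in the open right half-plane (a convex cone), the sum again lies in $\mathbb{H}$. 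So $Z$ is a central charge and $\phi(M) = \frac{1}{\pi}\theta(M) \in (-\tfrac12, \tfrac12)$ is well-defined.

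For the stability condition, the key point is the slope comparison: I would show that whenever there is a nonzero morphism $f \colon L \to M$ between indecomposables in $\ind Q$, the angle (equivalently slope) of $\vec{G}(L)$ is strictly smaller than that of $\vec{G}(M)$. Granting this, if $L \subsetneq M$ is a nonzero proper subrepresentation and $L = \bigoplus L_t$ is its decomposition into indecomposables, then each inclusion $L_t \hookrightarrow M$ is a nonzero morphism, so $\theta(L_t) < \theta(M)$ for all $t$; since $\vec{G}(L) = \sum_t \vec{G}(L_t)$ is a sum of vectors all of whose arguments are strictly less than $\theta(M)$ and all lie in the right half plane, the argument of the sum is also strictly less than $\theta(M)$, giving $\phi(L) < \phi(M)$. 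This is exactly the convex-cone argument again, now applied on the side $\{r e^{i\theta} : \theta < \theta(M)\} \cap \mathbb{H}$.

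So everything reduces to the slope-monotonicity of morphisms, which the excerpt attributes to "the same argument as in the proof of Lemma~\ref{lem 7}" (a lemma not shown in this excerpt). I would prove it by reducing to irreducible morphisms: any nonzero morphism between indecomposables in a representation-finite hereditary algebra of type $\mathbb{A}$ factors as a composition of irreducible morphisms (up to scalars, using that $\ind Q$ is the mesh category), so it suffices to check that each irreducible morphism $M(i+1,j) \to M(i'+1,j')$ strictly increases the angle. By Theorem~\ref{thm 1}, such a morphism corresponds to a pivot $\zg(i,j) \to \zg(i',j')$, which fixes one endpoint and moves the other to its counterclockwise neighbor on the boundary of $P(Q)$. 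Here I would use the rigid planar embedding of $P(Q)$: moving one endpoint counterclockwise while holding the other fixed rotates the oriented segment counterclockwise, hence strictly increases $\theta$; one checks this separately for the four pivot types (moving the head up, moving the head forward along the bottom, moving the tail, etc.), each of which is an elementary planar-geometry observation about the positions of consecutive boundary vertices. The composition of finitely many strict increases is a strict increase, completing the argument.

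\textbf{Main obstacle.} The genuinely delicate step is the claim that every pivot strictly increases the angle $\theta$. This is where the "rigidity" of the model — the fact that the vertices of $P(Q)$ occupy fixed positions in the plane rather than an arbitrary convex configuration — is essential, and it must be checked that the specific placement rules (vertices left-to-right, upper-barred above the line $L$, lower-barred below) indeed force counterclockwise motion of an endpoint to rotate the segment counterclockwise in all cases, including the boundary/degenerate cases where the segment is itself a boundary edge. Once that geometric lemma is in hand, the rest is the soft convex-cone packaging described above.
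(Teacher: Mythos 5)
Your proposal is correct and follows essentially the same route as the paper: the central charge lands in the strict right half plane because the vertices of $P(Q)$ are placed left to right, and stability reduces to the fact that a nonzero morphism between indecomposables corresponds to a sequence of pivots, each of which strictly increases the angle of the oriented line segment --- exactly the argument of Lemma~\ref{lem 7} that the paper invokes. The only places you go beyond the paper's write-up are welcome bits of added care: the convex-cone step handling subrepresentations $L$ that are decomposable (the paper writes $G(L)$ as though $L$ were indecomposable) and the explicit reduction of slope-monotonicity to a case check on the pivot types, which the paper asserts without detail.
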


\begin{remark}
In \cite{Kinser}, Kinser characterizes the stability conditions in question by two sets of inequalities that are formulated using a decomposition of the quiver into horizontal and vertical segments. The horizontal segments in Kinser's quiver correspond to the lower boundary segments in our polygon in order from left to right, and his vertical segments correspond to our upper boundary segments also in order from left to right. Kinser's inequalities in \cite[Theorem 1.13]{Kinser} translate in our setting to the condition that the central charge (=slope) of the upper boundary segments is decreasing when moving along the upper boundary from left to right, and the central charge of the lower boundary segments is increasing.
\end{remark}

\subsection{A geometric model for the derived category}  The stability function  naturally allows for a geometric model for the derived category which we describe in this subsection. Geometric models for derived categories for arbitrary gentle algebras were given in \cite{OPS}, and our result here is equivalent to (a special case) of their construction. Note however, that the stability function is new.

Let $\cald=\cald^{b}(\rep Q)$ be the  derived category  of bounded complexes in $\rep Q$.  Since $\rep Q$ is a hereditary category, the indecomposable objects of $\cald$ are of the form $M[i]$, with $M$ an indecomposable representation of $Q$ and $i$ an integer, called the shift. Moreover if $M,N$ are indecomposable representations then \[\Hom_{\cald}(M[i],N[j])=\left\{\begin{array}{ll}\Hom_{\rep Q}(M,N) &\textup{if $j=i$;}\\
\Ext^{1}_{\rep Q}(M,N) &\textup{if $j=i+1$:}
\\ 0 &\textup{otherwise.}\end{array}\right.\]
In particular, the Auslander-Reiten quiver of $\cald$ is the translation quiver $\mathbb{Z}Q$, and in this quiver, each arrow corresponds either to (a shift of) an irreducible morphism in $\rep Q$ or to (a shift of) a short exact sequence of the form 
$0\to P(x)\to E\to I(y)\to 0$
with $P(x)$ an indecomposable projective, $I(y)$ an indecomposable injective, $E$ an indecomposable representation and there exists an arrow $y\to x$ in $Q$.

We know from Theorem~\ref{thm 1} that the irreducible morphisms in $\rep Q$ correspond to the pivots $\zg(i,j)\to \zg(R^{{-1}}(i), j)$ or $\zg(i,j)\to \zg(i, R^{-1}(j))$ of  oriented line segments in $\cale$, and these are defined only if $R^{{-1}}(i) < j$ or $i < R^{-1}(j)$, respectively. 
This condition can be reformulated by saying that the stability function $\phi$ evaluated on  the line segment after the pivot is still between $-\frac{1}{2}$ and $\frac{1}{2}$.

Now let us consider a pivot  $\zg(i,j)\to \zg(R^{{-1}}(i), j)$ that violates this condition, thus  $R^{{-1}}(i) > j$. 
This happens precisely when $i$ lies on the lower boundary (or $i=0$), all vertices $i+1,i+2,\ldots, j$ lie on the upper boundary, so that $R^{-1}(i)=k\ge j+1$ is the first vertex after $i$ on the lower boundary (possibly $k=n+2$). In particular, the quiver contains the following subquiver
\[ (i+1)\to\cdots\to j \to (j+1)\to\cdots\to k,\]
where  the vertex $(i+1)$ is a source and the vertex $k$ is a sink. In particular, the representation $F(\zg(i,j))=M(i+1,j)=I(j)$ is the injective at vertex $j$, and
 the reverse $\zg(j,R^{-1}(i))\in \cale$ of the pivot $\zg(R^{-1}(i),j)$ corresponds to the representation $F(\zg(j,R^{-1}(i)))=M(j+1,k)=P(j+1)$, which is the projective at vertex $j+1$. In particular, there is an arrow $j\to j+1$ and a short exact sequence
\[\xymatrix{0\ar[r]& P(j+1) \ar[r]^-{f}& M(i+1,k)\ar[r]^-{g}& I(j)\ar[r]& 0}.\]
Moreover, the line segment corresponding to the middle term of this sequence is $\zg(i,k)$ is a boundary segment on the lower boundary.

We define a generalized pivot to be a map $\zg(i,j)\to \zg(R^{{-1}}(i), j)$ or $\zg(i,j)\to \zg(i, R^{-1}(j))$ with $0\le i,j\le n+1$ but without any other condition on $i$ and $j$. 
Thus in the derived category a generalized pivot $\zg(i,j)\to \zg(R^{{-1}}(i), j)$ with $i<j$ and $R^{{-1}}(i) > j$ corresponds to a morphism $I(j)\to P(j+1)[1]$. 
Note that  $P(j+1)$  and its shift $P(j+1)[1]$ are represented by the same line segment, but with opposite orientations. 
Similarly,  a generalized pivot $\zg(i,j)\to \zg(i, R^{{-1}}(j))$ with $i<j$ and $i > R^{{-1}}(j)$ corresponds to a morphism $I(j)\to P(j-1)[1]$.

More generally, consider the following translation quiver $(\ga^{\mathbb{Z}}, R^{\mathbb{Z}})$. The  quiver $\ga^\mathbb{Z}$  has as vertices the pairs $(\zg,j)\in\cale\times\mathbb{Z}$,
 and  there is an arrow $(\zg,j)\to (\zg',j)$ if there is a pivot $\zg\to\zg'$, and there is an arrow $(\zg,j)\to (\overline{\zg'},j+1)$ if there is a generalized pivot $\zg\to \zg'$ but $\zg'\notin \cale$, where $\overline{\zg'}$ is the same line segment as $\zg'$ but in the opposite direction.
The translation $R^\mathbb{Z}$ is defined by 

\[R^\mathbb{Z}(\zg,j)=
\left\{
\begin{array}{ll} 
(R(\zg),j) &\textup{if $R(\zg)\in \cale$};\\[5 pt]
(\overline{R(\zg)},j-1) &\textup{if $R(\zg)\notin \cale,$}
\end{array}\right.\]
where $R$ is the rotation of a line segment defined in equation (\ref{eq 42new}).

Let $\cc^\mathbb{Z}$ be the mesh category of $\ga^\mathbb{Z}$. Then the functor $F$ of Theorem \ref{thm 1} induces an equivalence of categories
\[F^\mathbb{Z}\colon\cc^\mathbb{Z}\to\ind\cald\] that maps the pair $(\zg,j)$ to the object $F(\zg)[j]$.

Our stability function $\phi$ on $\rep Q$ induces a stability function on the derived category which we also denote by $\phi$, see \cite[Proposition 5.3]{Bridgeland}. This stability function is again given by the angle of the corresponding line segment. More precisely, for every indecomposable representation $M$ we define the angle $\theta(M[j])$ and the stability function $\phi(M[j])$ of the $j$-th shift of $M$ in $\cald$ as 
\[\theta(M[j])=\theta(M)+j\pi
\qquad \textup{and} \qquad 
\phi(M[j])=\phi(M)+j.\]
Note that $\phi(M[j])=\frac{1}{\pi}\theta(M[j] )$.


\section{Maximal almost rigid representations}\label{sect 5}
In this section, we give a characterization of the representations that correspond to the triangulations of $P(Q)$ under the equivalence $F\colon\cc\to\ind Q$.  
It seems that  this class of representations has not been studied so far. 

Recall that for $A,B\in \rep Q$, the vector space $\Ext(B,A)$ can be identified with the space of all short exact sequences of the form $0\to A\to E\to B\to 0$. Given such a short exact sequence, we call $A,B$ the end terms and $E$ the middle term of the sequence.
 Recall further that a  representation is called \emph{basic} if it has no repeated direct summands.
\begin{definition}
\label{def:almost_rigid}
We say that a  quiver representation $T$ is \emph{almost rigid} if it is basic and satisfies the following: For each pair $A,B$ of indecomposable summands of $T$, either $\Ext(A,B)=0$ or $\Ext(A,B)=\Bbbk$ and it is generated by a short exact sequence of the form $0 \to B \to E
\to A \to 0$ whose middle term $E$ is indecomposable.

We say that a  representation $T$  is \emph{maximal almost rigid} if,  for every nonzero representation $M$, the representation $T\oplus M$ is not almost rigid.

Let 
$\mar$ be the set of all maximal almost rigid representations of $Q$.
\end{definition}

\begin{example}
Let  $Q$ be the quiver $1\to 2$ of type $\mathbb{A}_2$. Up to isomorphism, there are precisely three indecomposable  representations of $Q$, namely $1,2$ and $\begin{smallmatrix}
1\\2
\end{smallmatrix}$. 
The representation $T'=1\oplus 2$ is almost rigid since the extension $\begin{smallmatrix}
1\\2
\end{smallmatrix}$ is indecomposable.
The representation $T=1\oplus\begin{smallmatrix}
1\\2
\end{smallmatrix}\oplus 2 $
is the only maximal almost rigid representation (up to isomorphism). 
\end{example}

\begin{example}
Let $Q$ be the quiver $1\to 2\to 3$. Then the representation $\begin{smallmatrix}
1\\2
\end{smallmatrix}\oplus \begin{smallmatrix}
2\\3
\end{smallmatrix}$ is not almost rigid, because there is an extension
\[0\to \begin{smallmatrix}
2\\3
\end{smallmatrix} \to
\begin{smallmatrix}
1\\2\\3
\end{smallmatrix} \oplus 
\begin{smallmatrix}
2
\end{smallmatrix}\to
\begin{smallmatrix}
1\\2
\end{smallmatrix} \to0 \]
whose middle term is not indecomposable.
\end{example}

\begin{remark}
At first sight, the definition of maximal almost rigid representations does not seem very natural from a representation theoretical perspective. However, at least for the path algebras of a Dynkin type $\mathbb{A}$ quiver, our Theorem~\ref{thm 2}  shows that these representations are important from a combinatorial perspective, since they do correspond to the triangulations of the polygon. 

It would be interesting to see if this correlation extends to other types of algebras beyond the path algebras of type $\mathbb{A}$. As we shall see in Example~\ref{ex D}, this does not seem to be the case for path algebras of Dynkin type $\mathbb{D}$. 
The reason for this difference is that an indecomposable representation in type $\mathbb{D}$ may contain a vertex with an incoming arrow and two outgoing arrows such that the composition of the incoming arrow with either one of the outgoing arrows is nonzero. 

Therefore, in order to find a more general class of algebras for which the maximal almost rigid representations may play a similar combinatorial role as in type $\mathbb{A}$, we need to look for algebras whose indecomposable representations are locally of the same form as those of type $\mathbb{A}$. 
These algebras are known as gentle algebras. 
They are given by quivers with relations (see Example~\ref{ex:gentlealgebra}). 
A combinatorial model for the module category of a gentle algebra has been found in~\cite{BCS21} by Baur and Coelho Sim\~{o}es, and a combinatorial model for the derived category of a gentle algebra in~\cite{OPS} by Opper, Plamondon and Schroll. 
It would be interesting to see if the maximal almost rigid representations (or complexes) of a gentle algebra realize combinatorially notable configurations in this model.
\end{remark}

In order to prove Theorem~\ref{thm 2}, we first need to recall the structure of extensions in type $\mathbb{A}$.
The following result is well-known. For a proof see for example \cite[Chapter 3]{S}.

\begin{prop}
\label{prop:ses} Let $Q$ be a quiver of type $\mathbb{A}_{n+2}$, and let $A,B$ be indecomposable representations of $Q$.
\begin{itemize}
\item [\rm (a)] There exists a non-split  short exact sequence 
with end terms $A,B$ and \emph{indecomposable} middle term $E$ if and only if
the relative position of $A,B,E$ in the Auslander--Reiten quiver defines a rectangle with one point missing as on the left of Figure~\ref{fig 54}.
\item [\rm (b)] There exists a non-split  short exact sequence 
with endterms $A,B$ and \emph{decomposable} middle term $E$ if and only if
$E$ has two indecomposable summands $E_1, E_2$, and the relative position of $A,B,E_1,E_2$ in the Auslander--Reiten quiver defines a rectangle  as on the right of Figure~\ref{fig 54}.
\end{itemize}
\end{prop}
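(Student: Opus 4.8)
The plan is to reduce everything to the interval combinatorics of type $\mathbb{A}$, using as the only general input the bound $\dim_{\Bbbk}\Ext^1(X,Y)\le 1$ for indecomposables $X,Y$. First I would record the generalities. The path algebra $\Bbbk Q$ is hereditary, so if $A$ is projective then $\Ext^1(A,-)=0$ and there is nothing to prove; otherwise, since each indecomposable $M(i,j)$ is thin (all its vector spaces are $0$ or $\Bbbk$) one has $\dim_{\Bbbk}\Hom(X,Y)\le 1$ for indecomposables, hence also $\dim_{\Bbbk}\Ext^1(A,B)\le 1$ (for instance by the Auslander--Reiten formula, or directly from Proposition~\ref{prop gabriel}). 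Consequently a non-split short exact sequence with end terms $A,B$, when it exists, is unique up to isomorphism of short exact sequences, so it suffices to decide, purely in terms of the two intervals, when such a sequence exists and what its middle term is.

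For this I would argue directly with the modules. Write $A=M(i,j)$ and $B=M(c,d)$. A non-split sequence $0\to B\to E\to A\to 0$ presents $E$ as an extension of the string module $A$ by the string module $B$, and the extension theory for string algebras (Butler--Ringel, already used in Section~\ref{sect 3}) describes $E$ completely: either the two intervals are glued along a single arrow of $Q$ at a compatible end --- which happens exactly when the intervals are disjoint and adjacent with the correct source/sink pattern at the two joining vertices, and yields $E=M(i,d)$ \emph{indecomposable} --- or the two intervals overlap along a common subinterval with the correct pattern at the two matching ends, which yields $E=M(i,d)\oplus M(c,j)$ \emph{decomposable}; and there are the mirror possibilities obtained by exchanging the roles of the two ends of $A$. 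In each case, exactness of the candidate sequence is immediate by inspection, non-splitness follows from $E\not\cong A\oplus B$ together with Proposition~\ref{prop gabriel}, and conversely $\underline{\dim}\,E=\underline{\dim}\,A+\underline{\dim}\,B$ and Proposition~\ref{prop gabriel} leave no other possibility for $E$. This is exactly what produces the two patterns in the statement.

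To finish, I would translate the interval conditions into positions in $\zG_{\rep Q}$. Identify $\zG_{\rep Q}$ with the set $\{(i,j)\mid 1\le i\le j\le n+2\}$ inside $\mathbb{Z}\mathbb{A}_{n+2}$ via $M(i,j)\leftrightarrow (i,j)$, with $\tau(i,j)=(i+1,j+1)$ where defined and the arrows the unit steps prescribed by the orientation of $Q$; this is a direct consequence of the description of irreducible morphisms in Proposition~\ref{prop:4possibilities} (equivalently, it is built into the translation-quiver isomorphism of Theorem~\ref{thm 1}). Under this identification, the adjacency/overlap-along-an-arrow condition on $[i,j]$ and $[c,d]$ says precisely that $A=M(i,j)$ and $B=M(c,d)$ sit at two opposite corners of an axis-parallel rectangle, whose other two corners are $M(i,d)$ and $M(c,j)$; and $M(c,j)$ is the zero module exactly when $c>j$, i.e.\ exactly when the two intervals are disjoint and that corner has left the region $\{i\le j\}$ and must be erased. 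This gives both directions of (a) (rectangle with one corner missing, indecomposable middle term) and of (b) (full rectangle, decomposable middle term).

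The step I expect to be the main obstacle is the case analysis in the middle paragraph: the ``correct source/sink pattern'' conditions, together with whether the gluing or the overlap occurs at the left or at the right end of $A$, split into several subcases, each needing a short but careful verification and --- more annoyingly --- a correct matching to the appropriate side of Figure~\ref{fig 54}. One could instead stay homological throughout: combine the Auslander--Reiten formula (expressing $\Ext^1(A,B)$ as a dual stable Hom space) with the equally standard rectangle description of the support of $\Hom$-spaces in type $\mathbb{A}$, and recover the middle term from the fact that the pushout of the sought sequence along the canonical map $B\to\tau A$ is the Auslander--Reiten sequence ending at $A$; but identifying $E$ this way requires comparable bookkeeping, so the direct combinatorial route seems cleanest.
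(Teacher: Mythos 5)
The paper does not actually prove this proposition: it is stated as a well-known fact with a pointer to the textbook literature and a \qed, so there is no in-paper argument to compare yours against. Your outline is the standard one --- uniqueness of the non-split sequence from $\dim_\kb\Ext^1(A,B)\le 1$, the Butler--Ringel dichotomy between arrow extensions (disjoint adjacent intervals, indecomposable middle term $M(i,d)$) and overlap extensions (properly overlapping intervals, middle term $M(i,d)\oplus M(c,j)$), and the translation of ``same left endpoint / same right endpoint'' into the two sectional directions of $\zG_{\rep Q}$. It is sound in structure, and it is consistent with how the paper actually \emph{uses} the proposition: in the proof of Theorem~\ref{thm 2} the rectangle is written as $E_1=M(i,R^{-t}(j))$, $E_2=M(R^{-s}(i),j)$, $B=M(R^{-s}(i),R^{-t}(j))$, i.e.\ axis-parallel in the two-endpoint coordinates.

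Two caveats. First, the identification ``$\tau(i,j)=(i+1,j+1)$'' is false for non-linear orientations: for $Q\colon 1\to 2\leftarrow 3$ one has $\tau M(1,1)=M(2,3)$ and $\tau M(1,3)=M(2,2)$. What your rectangle argument actually needs --- and what does follow from Proposition~\ref{prop:4possibilities} --- is only that each arrow of $\zG_{\rep Q}$ changes exactly one support endpoint, so that the sectional directions are ``fix the left endpoint'' and ``fix the right endpoint''; the correct global coordinates are the polygon endpoints ordered by $R^{-1}$ as in Theorem~\ref{thm 1}, not the numerical interval coordinates with $\tau$ acting by $+(1,1)$. Since the formula is not load-bearing, this is a repairable misstatement rather than a fatal flaw. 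Second, the middle paragraph is where all the content lives --- determining which source/sink patterns at the gluing (resp.\ overlap) boundaries admit a monomorphism $B\to E$ with cokernel $A$, and checking that nested or non-adjacent disjoint intervals give $\Ext^1=0$ --- and you explicitly defer it. As a sketch matching the standard references it is acceptable; as a proof it is incomplete until that case analysis, and its matching to the two halves of Figure~\ref{fig 54}, is written out.
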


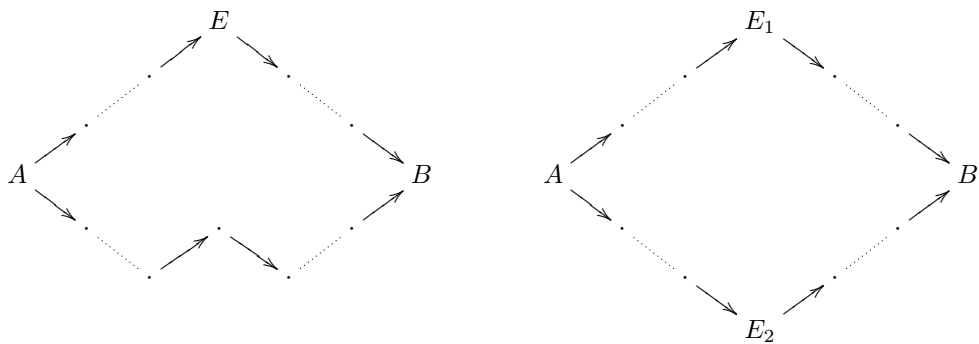
\begin{figure}
 \[ \xymatrix@C15pt@R8pt{
 &&&&E\ar[rd]\ar@{<-}[ld]&&&&
 &&&&E_1\ar[rd]\ar@{<-}[ld]&&&&
 \\
&&&\cdot&&\cdot&&&
&&&\cdot&&\cdot&&&
\\ 
&&\cdot\ar@{.}[ru]\ar@{<-}[ld]&&&&\cdot\ar@{->}[rd]\ar@{.}[lu]&&
&&\cdot\ar@{<-}[ld]\ar@{.}[ru]&&&&\cdot\ar@{->}[rd]\ar@{.}[lu]&&
\\
&A\ar[rd]&&&&&&B\ar@{<-}[ld]& 
&A\ar[rd]&&&&&&B\ar@{<-}[ld]& 
\\
&&\cdot&&\cdot&&\cdot&&
&&\cdot&&&&\cdot&&
\\
&&&\cdot\ar@{.}[lu]\ar@{->}[ru]&&\cdot\ar@{<-}[lu]\ar@{.}[ru]&&&
&&&\cdot\ar@{.}[lu]\ar@{->}[rd]&&\cdot\ar@{<-}[ld]\ar@{.}[ru]&&&
\\
&&&&&&&&
 &&&&E_2
 }\]

\caption{Short exact sequences $0\to A\to E\to B\to 0$ seen in the Auslander--Reiten quiver. On the left, the middle term $E$ is indecomposable, on the right $E=E_1\oplus E_2$.}
\label{fig 54}

\end{figure}

\begin{example}
Consider $P_2, I_4,I_6 $ and $I_7$ in Figure~\ref{fig:ar_quiver}. The rectangle defined by $P_2$ and $I_4$ lies completely within the AR-quiver and therefore there is an extension 
$0\to P_2\to M(1,4)\oplus M(2,5)\to I_4\to0$.

The rectangle defined by $P_2$ and $I_6$ is missing precisely one point, thus there is an extension 
$0\to P_2\to M(2,7)\to I_6\to0$ with indecomposable middle term.

On the other hand, the rectangle defined by $P_2$ and $I_7$ is missing three points, and hence $\Ext(I_7,P_2)=0$.

\end{example}

\begin{example}
Figures~ \ref{fig:crossingdiagonals:indecomposable_extension} and \ref{fig:crossingdiagonals:decomposable_extension} illustrate short exact sequences in terms of line segments. 
It may happen that two line segments share an endpoint but there is no extension between them. See Figure~\ref{fig:crossingdiagonals:tilting}. 
\end{example}

\begin{figure}
    \centering
\definecolor{mygreen}{cmyk}{0.5,0,0.5,0.5}
\def\pentascale{0.27}
\def\myyellow{white}
\def\mygray{gray}
\def\myboundaryedge{blue}
\def\myedgecolor{mygreen}
\def\myedgesize{ultra thick}
\tikzset{->-/.style={decoration={
			markings,
			mark=at position #1 with {\arrow{stealth}}},postaction={decorate}}}
\def\myxscale{1.4}
\def\myyscale{1.4}

\newcommand\mypentagon
{
	\begin{scope}[opacity=0.8,scale=\pentascale]
		\node (0) at (0,0) [fill,circle,inner sep=0.5] {};
		\node (1) at (0.7,1.2) [opacity=1,fill,circle,inner sep=0.5] {};
		\node (2) at (1.8,1.8) [fill,circle,inner sep=0.5] {};
		\node (3) at (3,2) [fill,circle,inner sep=0.5]{}; 
		\node (4) at (4,-1.5) [fill,circle,inner sep=0.5] {};
		\node (5) at (5,1.8) [fill,circle,inner sep=0.5] {};
		\node (6) at (6,-1) [fill,circle,inner sep=0.5] {};
		\node (7) at (6.5,0) [fill,circle,inner sep=0.5] {};
		
		\coordinate (northsource) at (7,2);
		\coordinate (northtarget) at (8.5,3.5);
		\coordinate (southsource) at (7,-2);
		\coordinate (southtarget) at (8.5,-3.5);
		
		\draw (0) node[left] {$0$};
		\draw (1) node[above] {$1$};
		\draw (2) node[above] {$2$};
		\draw (3) node[above] {$3$};
		\draw (4) node[below] {$4$};
		\draw (5) node[above] {$5$};
		\draw (6) node[below] {$6$};
		\draw (7) node[right] {$7$};
	\end{scope}
}

\begin{tikzpicture}[xscale=\myxscale,yscale=\myyscale] 
\tikzstyle{every node}=[font=\tiny]
	\mypentagon{\mygray}
	\draw[->-=0.40,\myedgecolor,\myedgesize] (2) -- (4);
	\draw (2) node [green,fill,circle,inner sep=1pt] {};
	
	\draw[->-=0.86,\myedgecolor,\myedgesize] (1) -- (2);
	\draw (1) node [green,fill,circle,inner sep=1pt] {};

\end{tikzpicture}
\begin{tikzpicture}[xscale=\myxscale,yscale=\myyscale] 
\tikzstyle{every node}=[font=\tiny]
	\mypentagon
	\draw[->-=0.66,violet,\myedgesize] (1) -- (4);
	\draw (1) node [green,fill,circle,inner sep=1pt] {};
	\draw (4) node [red,fill,circle,inner sep=1pt] {};
\end{tikzpicture}
\caption{Line segments $\gamma(1,2)$ and $\gamma(2,4)$ share an endpoint at the vertex $2$, and their extension is the line segment $\zg(1,4)$. Applying $F$  yields the short exact sequence $0\to M(3,4)\to M(2,4)\to S(2)\to 0$.}
\label{fig:crossingdiagonals:indecomposable_extension}

\begin{tikzpicture}[xscale=\myxscale,yscale=\myyscale] 
\tikzstyle{every node}=[font=\tiny]
	\mypentagon{\mygray}
	\draw[->-=0.40,\myedgecolor,\myedgesize] (2) to [] (4);
	\draw (2) node [green,fill,circle,inner sep=1pt] {};
	\draw (4) node [red,fill,circle,inner sep=1pt] {};

	\draw[->-=0.86,\myedgecolor,\myedgesize] (0) to [bend right=40] (5);
	\draw (0) node [green,fill,circle,inner sep=1pt] {};
	\draw (5) node [red,fill,circle,inner sep=1pt] {};
\end{tikzpicture}
\begin{tikzpicture}[xscale=\myxscale,yscale=\myyscale] 
\tikzstyle{every node}=[font=\tiny]
	\mypentagon
	\draw[->-=0.66,violet,\myedgesize] (0) to [bend right=20] (4);
	\draw (0) node [green,fill,circle,inner sep=1pt] {};
	\draw (4) node [red,fill,circle,inner sep=1pt] {};
	
    \draw[->-=0.66,violet,\myedgesize] (2) to [bend right=60] (5);
	\draw (2) node [green,fill,circle,inner sep=1pt] {};
	\draw (5) node [red,fill,circle,inner sep=1pt] {};
\end{tikzpicture}

\caption{Diagonals $\gamma(2,4)$ and $\gamma(0,5)$ cross each other, and their extension is the direct sum of $\zg(0,4)$ and $\zg(2,5)$. Applying $F$ yields the short exact sequence $0\to M(3,4)\to M(1,4)\oplus M(3,5)\to M(1,5)\to 0$.}
\label{fig:crossingdiagonals:decomposable_extension}

\begin{tikzpicture}[xscale=\myxscale,yscale=\myyscale] 
\tikzstyle{every node}=[font=\tiny]
	\mypentagon{\mygray}
	
	\draw[->-=0.66,\myedgecolor,\myedgesize] (0) to [bend right=20] (4);
	\draw (0) node [green,fill,circle,inner sep=1pt] {};
	\draw (4) node [red,fill,circle,inner sep=1pt] {};

	\draw[->-=0.70,\myedgecolor,\myedgesize] (0) to [] (1);
	\draw (0) node [green,fill,circle,inner sep=1pt] {};
	\draw (1) node [red,fill,circle,inner sep=1pt] {};
	
	\draw[->-=0.60,\myedgecolor,\myedgesize] (2) to [] (4);
	\draw (2) node [green,fill,circle,inner sep=1pt] {};
	\draw (4) node [red,fill,circle,inner sep=1pt] {};
	
\draw[->-=0.66,\myedgecolor,\myedgesize] (2) to [bend right=30] (5);
	\draw (2) node [green,fill,circle,inner sep=1pt] {};
	\draw (5) node [red,fill,circle,inner sep=1pt] {};	
	
\draw[->-=0.66,\myedgecolor,\myedgesize] (2) to [bend right=30] (7);
	\draw (2) node [green,fill,circle,inner sep=1pt] {};
	\draw (7) node [red,fill,circle,inner sep=1pt] {};

\draw[->-=0.76,\myedgecolor,\myedgesize] (6) -- (7);
	\draw (6) node [green,fill,circle,inner sep=1pt] {};
	\draw (7) node [red,fill,circle,inner sep=1pt] {};	
	
\draw[->-=0.76,\myedgecolor,\myedgesize] (3) -- (5);
	\draw (3) node [green,fill,circle,inner sep=1pt] {};
	\draw (5) node [red,fill,circle,inner sep=1pt] {};		
\end{tikzpicture}
\caption{$\Ext^1(A,B)=0$ for all $A,B$ in  $\{S(1)$, $M(1,4)$, $M(3,4)$, $M(3,7)$, $M(3,5)$, $M(4,5)$, $S(7)\}$ which corresponds to the set of illustrated line segments.}
\label{fig:crossingdiagonals:tilting}
\end{figure}

We have the following realization of maximal almost rigid representations.

\begin{thm}\label{thm 2}
 $F$ induces a bijection also denoted by $F$
 \[F\colon\{\textup{triangulations of $P(Q)$}\} \to \mar. 
 \] 
\end{thm}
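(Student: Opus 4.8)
The plan is to characterize triangulations of $P(Q)$ on the combinatorial side in terms of the geometry of line segments, translate this characterization across $F$ using the explicit correspondence between extensions and geometric configurations established in Proposition~\ref{prop:ses} together with Figures~\ref{fig:crossingdiagonals:indecomposable_extension}--\ref{fig:crossingdiagonals:tilting}, and show that the image is precisely $\mar$. The key observation is that two line segments $\zg(i,j)$ and $\zg(k,l)$ in $P(Q)$ either are disjoint (in their interiors), share an endpoint, or cross. By the discussion around Proposition~\ref{prop:ses}, crossing line segments correspond to a pair $A,B$ with a decomposable extension (so $A\oplus B$ is \emph{not} almost rigid), while segments sharing an endpoint correspond either to an indecomposable extension or to $\Ext=0$ in both directions (so $A\oplus B$ \emph{is} almost rigid), and disjoint non-crossing segments always give an almost rigid pair. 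Hence ``the direct sum of the images of a set of line segments is almost rigid'' is equivalent to ``no two of the line segments cross,'' which is exactly the definition of a partial triangulation of $P(Q)$.

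\medskip

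First I would make the dictionary precise: I claim that for line segments $\zg=\zg(i,j)$ and $\zg'=\zg(k,l)$ with images $M=F(\zg)$, $N=F(\zg')$, the pair $M\oplus N$ fails to be almost rigid if and only if $\zg$ and $\zg'$ cross in the interior of $P(Q)$. The ``if'' direction: crossing segments have all four endpoints distinct and, reading off the rectangle configuration in the Auslander--Reiten quiver via Theorem~\ref{thm 1}(1), land in the situation on the right of Figure~\ref{fig 54}, producing a nonsplit extension with decomposable middle term. The ``only if'' direction: if $\zg$ and $\zg'$ do not cross, then either they are disjoint or share an endpoint; in the shared-endpoint case one checks using Proposition~\ref{prop:ses}(a) and the pivot/mesh structure that any nonsplit extension between $M$ and $N$ has indecomposable middle term (the ``rectangle with one point missing'' picture, as in Figure~\ref{fig:crossingdiagonals:indecomposable_extension}), and in the disjoint case there is no rectangle relating them at all so at worst $\Ext=0$ one way and an indecomposable extension the other. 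In all non-crossing cases $M\oplus N$ is almost rigid. This step is essentially bookkeeping with the AR-quiver combinatorics already set up, but it is the technical heart and I expect it to be the main obstacle: one must carefully handle the boundary edges and the cases where $M$ and $N$ share an endpoint but the extension could a priori be either type, and verify the orientation conventions match.

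\medskip

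Granting the dictionary, I would finish as follows. A triangulation $\calt$ of $P(Q)$ (including boundary edges) is a maximal collection of pairwise non-crossing line segments; by the dictionary, $F(\calt):=\bigoplus_{\zg\in\calt}F(\zg)$ is almost rigid. It is basic because $F$ is a bijection on objects (Theorem~\ref{thm 1}). For maximality: if $F(\calt)\oplus M$ were almost rigid for some indecomposable $M=F(\zg')$ with $\zg'\notin\calt$, then $\zg'$ would not cross any segment of $\calt$, contradicting maximality of the triangulation; hence $F(\calt)\in\mar$. Conversely, given $T=\bigoplus_{k} F(\zg_k)\in\mar$, the $\zg_k$ are pairwise non-crossing, so they form a partial triangulation; if it were not a full triangulation we could add a compatible diagonal $\zg'$, and then $T\oplus F(\zg')$ would still be almost rigid, contradicting maximality of $T$. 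So $\{\zg_k\}$ is a triangulation and $T=F(\{\zg_k\})$. Thus $F$ restricts to a well-defined bijection between triangulations of $P(Q)$ and $\mar$, which is the assertion. The only subtlety here is checking that ``maximal collection of non-crossing segments including boundary edges'' really does coincide with ``triangulation'' in the polygon $P(Q)$ as drawn (which may be non-convex), but since $P(Q)$ is homeomorphic to a disk this is standard.
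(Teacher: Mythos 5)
Your proposal follows essentially the same route as the paper: both arguments reduce the theorem to the dictionary ``decomposable middle term of a nonsplit extension $\Leftrightarrow$ the two line segments cross,'' established via Proposition~\ref{prop:ses}(b) and Theorem~\ref{thm 1}, and then identify maximal almost rigid representations with maximal non-crossing collections of line segments, i.e.\ triangulations. The paper carries out the technical step you flag by explicitly computing the rectangle configuration ($E_1=M(i,R^{-t}(j))$, $E_2=M(R^{-s}(i),j)$, $M(i',j')=M(R^{-s}(i),R^{-t}(j))$), which yields the crossing condition directly; your case split into disjoint/shared-endpoint/crossing is a correct, slightly more verbose packaging of the same argument.
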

\begin{proof} Let $M(i,j),M(i',j')$ be two non-isomorphic indecomposable representations of $Q$. Using Theorem \ref{thm 1} and  Proposition \ref{prop:ses} (b), we see that there is a short exact sequence of the form 
\[0\to M(i,j)\to E_1\oplus E_2\to M(i',j')\to 0\]
if and only if
there exist positive integers $s,t$ such that $E_1=M(i,R^{-t}(j))$, $E_2=M(R^{-s}(i),j)$ and $M(i',j')=M(R^{-s}(i),R^{-t}(j))$.
In particular, $i\le R^{-t}(j)$ and $R^{-s}(i)\le j$ and thus the line segments $F^{-1} (M(i,j))=\zg(i-1,j)$ and $F^{-1}(M(i',j'))=\zg(R^{-s}(i)-1,R^{-t}(j))$ are crossing diagonals in~$P(Q)$.

This shows that the sum of two indecomposable representations is almost rigid if and only if the corresponding line segments in $P(Q)$ do not cross.
Consequently, maximal almost rigid representations correspond to maximal sets of noncrossing line segments, hence triangulations.
\end{proof}
\begin{corollary}
\label{cor:catalan}
	If $Q$ is of type $\mathbb{A}_{n+2}$, 
	every maximal almost rigid representation of $Q$ has exactly $2n+3$ summands.  
	The maximal almost rigid representations of $Q$ are counted by the Catalan number $\binom{2n+2}{n+1} \frac{1}{n+2}$. 
\end{corollary}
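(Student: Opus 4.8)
The plan is to transport both counts across the bijection $F$ of Theorem~\ref{thm 2}, reducing the corollary to two classical facts about triangulations of a convex polygon; recall that $P(Q)$ is an $(n+3)$-gon. First I would count the summands. Since $F$ is a bijection on objects sending line segments to indecomposable representations (Theorem~\ref{thm 1}) and a basic representation is determined by its set of indecomposable summands, the summands of a maximal almost rigid representation $T$ correspond bijectively to the edges (boundary edges and diagonals) of the triangulation $F^{-1}(T)$ of $P(Q)$. A triangulation of a convex $m$-gon has $m$ boundary edges and $m-3$ diagonals, hence $2m-3$ edges in total; one can see this by induction on $m$ (adding a diagonal to a triangle splits off one further triangle, adding one edge) or from Euler's formula, noting that such a triangulation has $m-2$ triangular faces. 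With $m=n+3$ this gives $2n+3$, proving the first assertion.

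Next I would count the elements of $\mar$ themselves. By Theorem~\ref{thm 2}, $|\mar|$ equals the number of triangulations of the $(n+3)$-gon (counted with the boundary edges, which are common to all triangulations and so do not affect the count). It is a classical fact, going back to Euler and Segner, that the number of triangulations of a convex $m$-gon is the Catalan number $C_{m-2}=\frac{1}{m-1}\binom{2m-4}{m-2}$. Substituting $m=n+3$ yields $C_{n+1}=\frac{1}{n+2}\binom{2n+2}{n+1}$, which is exactly the count in the statement.

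There is essentially no genuine obstacle here: the mathematical content lies entirely in Theorems~\ref{thm 1} and \ref{thm 2}, and the corollary is a dictionary translation of two elementary enumerative facts about polygon triangulations. The only point that deserves a sentence of care is the bijection between the indecomposable summands of $T$ and the edges of $F^{-1}(T)$ — namely that distinct summands yield distinct line segments and that every edge of the triangulation arises as $F^{-1}$ of a summand — but this is immediate from $F$ being a bijection $\cale\to\ind Q$.
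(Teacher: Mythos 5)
Your proof is correct and follows essentially the same route as the paper: both arguments transport the count across the bijection of Theorem~\ref{thm 2} and then invoke the two classical facts that a triangulation of an $(n+3)$-gon has $n$ diagonals plus $n+3$ boundary edges (hence $2n+3$ edges) and that such triangulations are counted by the Catalan number $C_{n+1}$. The extra detail you supply (Euler's formula, the induction, and the remark that summands of a basic representation correspond bijectively to edges) is harmless elaboration of what the paper leaves implicit.
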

\begin{proof}
 The first statement follows from the theorem, because each triangulation of the $(n+3)$-gon $P(Q)$ has $n$ diagonals and $n+3$ boundary edges. The second statement holds because the number of triangulations of $P(Q)$ is given by the Catalan number.
\end{proof}

\section{Endomorphism algebras of maximal almost rigid representations}\label{sect 6}
Recall that a module $T$ is \emph{rigid} provided that $\Ext(T,T)=0$.
We say that $T$ is a \emph{maximal rigid module} if $T\oplus M$ is not rigid for any nonzero module $M$.
A maximal rigid module over the path algebra $\kb Q$ of a quiver $Q$ is called a {\em tilting module}, and the endomorphism algebras of tilting modules over path algebras are called \emph{tilted algebras}, see \cite{HR}. On the other hand, the endomorphism algebras of triangulations  of polygons are {\em cluster-tilted algebras}  \cite{CCS}. 
Both tilted algebras and cluster-tilted algebras have been studied extensively. 

It is therefore natural to study the endomorphism algebras of the maximal \emph{almost} rigid representations. The purpose of this section is to show that in type $\mathbb{A}_{n+2}$  these algebras are tilted algebras of type $\mathbb{A}_{2n+3}$. 
For more details on endomorphism algebras and tilted algebras, see textbooks~\cite[Chapter 8]{ASS} and \cite[Chapter 6]{S}.

To every quiver $Q$ of type $\mathbb{A}_{n+2}$ we associate a quiver $\Qbar$ of type $\mathbb{A}_{2n+3}$ by replacing each arrow $i\to (i+1)$ by a path of length two $i\to \left(\frac{2i+1}{2}\right)\to (i+1)$
 and each arrow $i\ot (i+1)$ by a path of length two $i\ot \left(\frac{2i+1}{2}\right)\ot (i+1)$, see Figure~\ref{fig 6} for an example. 
 Since $Q$ has $n+2$ vertices and $n+1$ arrows, $\Qbar$ has $2n+3$ vertices which are labeled by the half-integers $i\in \frac{1}{2}\mathbb{Z}$ with $1\le i\le n+2$. Note that Proposition~\ref{prop gabriel} implies that the indecomposable representations of $\Qbar$ are of the form $\Mbar(i,j) $ with $i\le j$ and $i,j\in \frac{1}{2}\mathbb{Z}$.

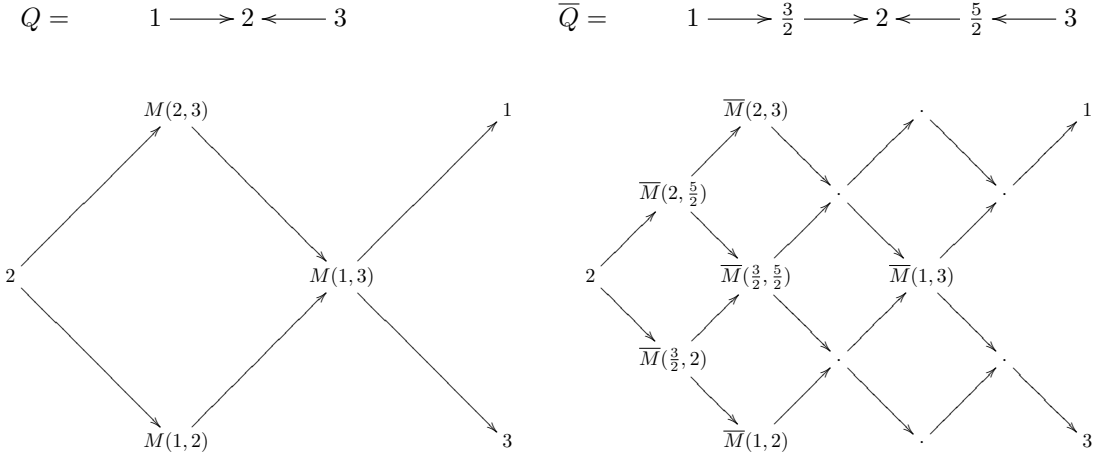
\begin{figure}
 
\begin{center}
 \[\xymatrix{Q=& 1\ar[r]&2&3\ar[l]&\qquad&\Qbar=&1\ar[r]&\frac{3}{2}\ar[r]&2&\frac{5}{2}\ar[l]&3\ar[l]}\]
 
 \[\scalebox{0.75}{\xymatrix@!@R0pt@C0pt{
 &&M(2,3)\ar[ddrr]&&&&1 
 &
&&\Mbar(2,3)\ar[dr]&&\cdot\ar[dr]&&1 
 \\ 
 &&&&&&&&\Mbar(2,\frac{5}{2})\ar[dr]\ar[ur]&&\cdot\ar[dr]\ar[ur]&&\cdot\ar[ur]\\ 
 2 \ar[uurr]\ar[ddrr] &&&&M(1,3)\ar[ddrr]\ar[uurr] &
 & 
 &2\ar[dr]\ar[ur]&&\Mbar(\frac{3}{2},\frac{5}{2})\ar[dr]\ar[ur]&&\Mbar(1,3)\ar[dr]\ar[ur]&&
 \\ 
&&& &&&&&\Mbar(\frac{3}{2},2)\ar[dr]\ar[ur]&&\cdot\ar[dr]\ar[ur]&&\cdot\ar[dr]
 \\ 
& &M(1,2)\ar[uurr]&&&&3
 &
 && \Mbar(1,2)\ar[ur]&&\cdot\ar[ur]&&3
 }}\]
 \caption{A quiver $Q$ and its Auslander--Reiten quiver on the left; the corresponding quiver $\Qbar$ and its Auslander--Reiten quiver on the right. The image of the functor $G$ are the representation $\Mbar(i,j)$ with $i$ and $j$ integers.}\label{fig 6}
\end{center}
\end{figure}
 Define a functor $G\colon \rep Q\to \rep \Qbar$ as follows. On indecomposable objects, we let $G(M(i,j))=\Mbar(i,j)$ and we extend it additively to all objects.
 If $f\colon M(i,j)\to M(i',j')$ is a morphism between indecomposables in  $\rep Q$, we let $G(f)=\fbar\colon\Mbar(i,j)\to \Mbar(i',j')$ defined by $\fbar_i=f_i$ if $i \in \mathbb{Z}$, and for the new vertices we let
 \[\fbar_{\frac{2i+1}{2}}=\left\{\begin{array}{ll}
 1&\textup{if $f_i=1$ and $f_{i+1}=1$}; \\
 0&\textup{otherwise.}
 \end{array}\right.\]
 
 For example, the irreducible morphism $2\to M(2,3)$ in $\rep Q$ in the example of Figure~\ref{fig 6} is mapped under $G$ to the composition of two irreducible morphisms $2\to\Mbar(2,\frac{5}{2})\to\Mbar(2,3)$.
 
\begin{lemma}
 \label{lem 6}
 The functor $G$ is full and faithful.
\end{lemma}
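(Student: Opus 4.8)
The plan is to verify fullness and faithfulness separately, reducing in both cases to the combinatorial description of morphisms between indecomposables in type $\mathbb{A}$. Recall from Section \ref{sect 3} that $\Hom_{\rep Q}(M(i,j), M(i',j'))$ is at most one-dimensional, and it is one-dimensional precisely when the supports ``overlap compatibly'', i.e. $i' \le i \le j' \le j$ together with the condition that the relevant composite of structure maps along the overlap is nonzero — equivalently, the interval $[i,j']$ is ``traversable'' in $Q$ in the appropriate sense. The same description holds for $\Qbar$, with half-integer endpoints allowed. First I would record this: a basis vector of $\Hom(M(i,j),M(i',j'))$, when it exists, is given by $f_\ell = 1$ for $\ell \in [i,j'] \cap \mathbb{Z}$ and $f_\ell = 0$ otherwise, and analogously for $\Qbar$ with $[i,j'] \cap \frac{1}{2}\mathbb{Z}$.

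For \emph{faithfulness}: since each $\Hom$-space in $\rep Q$ is at most one-dimensional, it suffices to check that $G$ does not kill a nonzero morphism. If $f\colon M(i,j)\to M(i',j')$ is the nonzero basis morphism, then $i,i',j,j'$ are integers and $\fbar_\ell = f_\ell = 1$ for all integers $\ell \in [i,j']$; in particular $\fbar$ is not the zero map, so $G(f)\ne 0$. Faithfulness on general objects then follows by additivity, using that $G$ preserves the decomposition into indecomposable summands.

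For \emph{fullness}: I must show that every morphism $\bar g\colon \Mbar(i,j)\to \Mbar(i',j')$ between the representations in the image of $G$ (so $i,i',j,j'\in\mathbb{Z}$) is of the form $G(f)$. If $\bar g=0$ there is nothing to prove (take $f=0$). If $\bar g\ne 0$, then since $\Hom_{\rep\Qbar}(\Mbar(i,j),\Mbar(i',j'))$ is one-dimensional, $\bar g$ is a scalar multiple of the basis morphism described above, which is supported (as $1$'s) exactly on $[i,j']\cap\frac12\mathbb{Z}$. The key point is that the overlap condition in $\Qbar$ for integer endpoints $i,j,i',j'$ is equivalent to the overlap condition in $Q$ for the same endpoints — this is because the path of length two $k \to \frac{2k+1}{2}\to (k+1)$ (or its reverse) replacing an arrow $k\to(k+1)$ is traversable in $\Qbar$ exactly when the arrow $k\to(k+1)$ is traversable in $Q$, and in type $\mathbb{A}$ traversability of an interval is determined arrow-by-arrow. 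Hence $\Hom_{\rep Q}(M(i,j),M(i',j'))$ is also one-dimensional, spanned by some $f$, and by the explicit formula for $G$ on morphisms we get $G(f) = $ the basis morphism of $\Hom_{\rep\Qbar}(\Mbar(i,j),\Mbar(i',j'))$; scaling $f$ appropriately gives $G(f)=\bar g$. Again additivity extends this to arbitrary objects in the image of $G$.

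I expect the main obstacle to be the bookkeeping in the fullness argument: making precise the claim that a morphism in $\rep\Qbar$ between $\Mbar(i,j)$ and $\Mbar(i',j')$ with integer endpoints must have \emph{all} of its half-integer components equal to its neighboring integer components — i.e. that it genuinely comes from the formula defining $G$ on morphisms, rather than being some ``exotic'' morphism supported only on half-integer vertices. This is forced by the commuting-square condition at each arrow of $\Qbar$ incident to a half-integer vertex (the structure maps of $\Mbar(i,j)$ and $\Mbar(i',j')$ along those arrows are identity maps between $\kb$'s when both ends are in the support), so once one integer component is $1$, the adjacent half-integer component is $1$, and conversely; this propagates across the whole overlap. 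Carefully stating the ``traversability is arrow-local'' fact for type $\mathbb{A}$ — perhaps as a small sublemma, or by citing the description of $\Hom$-spaces in \cite{ASS, S} — is what makes the argument airtight, and it is the one place where the specific shape of $\Qbar$ (subdividing each arrow into two, preserving orientation) is used.
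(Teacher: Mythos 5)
Your argument is essentially the paper's own proof: both rest on the fact that Hom-spaces between interval modules in type $\mathbb{A}$ are at most one-dimensional, spanned by the morphism that is the identity on the common support and zero elsewhere, so that a nonzero morphism $\Mbar(i,j)\to\Mbar(i',j')$ with integer endpoints is forced (via the commuting squares at the half-integer vertices, as you note) to be $G$ of the corresponding morphism in $\rep Q$. The one thing to fix is your recalled description of when $\Hom(M(i,j),M(i',j'))\ne 0$: the condition $i'\le i\le j'\le j$ is the linearly oriented case and fails for general orientations (e.g.\ $S(1)\to M(1,2)$ over $1\leftarrow 2$); the correct statement is that the support of the basis morphism is $[\max(i,i'),\min(j,j')]$ and existence is governed by the orientations of the two arrows at the boundary of this overlap --- which is exactly the ``arrow-local'' fact your fullness argument needs, and which transfers to $\Qbar$ because subdividing an arrow preserves its orientation.
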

 
\begin{proof}
 Faithfulness is clear from the definition. To show that $G$ is full, take a nonzero morphism $\fbar\in \Hom_{\Qbar}(G(M(i,j)),G(M(i',j')))$, thus $\fbar\colon\Mbar(i,j)\to\Mbar(i',j')$. We want to show that $\fbar=G(f)$ for some $f\in\Hom_{Q}(M(i,j),M(i',j'))$.
 Let $\Ibar\subset\Qbar_0$ be the set of all vertices on which  both $\Mbar(i,j) $ and $\Mbar(i',j')$ are nonzero. Then $\fbar_x=1$, if $x\in \Ibar$, and $\fbar_x=0$, otherwise. Let $I\subset \Ibar$ be the subset of vertices from $Q$ and define $f\colon M(i,j)\to M(i',j')$ by $f_x=1$, if $x\in I$, and $f_x=0$, otherwise. Then~$f$ is a morphism in $\rep Q$ and $G(f)=\fbar$. 
\end{proof}

\begin{corollary}
 \label{cor 6}
 For all $T\in \rep Q$, we have $\End_{\rep Q}\, T\cong \End_{\rep \Qbar}\, G(T).$
\end{corollary}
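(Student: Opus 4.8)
The plan is to derive this as a purely formal consequence of Lemma~\ref{lem 6}. A fully faithful functor induces a bijection on each morphism space, so taking source and target both equal to $T$, the map $G\colon\Hom_{\rep Q}(T,T)\to\Hom_{\rep\Qbar}(G(T),G(T))$ is a bijection $\End_{\rep Q}T\to\End_{\rep\Qbar}G(T)$. All that then remains is to check that this bijection respects the $\kb$-algebra structure.

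That part is immediate from the fact that $G$ is a $\kb$-linear functor. Since $G$ is defined on indecomposables and extended additively, it is additive and $\kb$-linear on morphism spaces, so $f\mapsto G(f)$ preserves sums and scalar multiples; since $G$ is a functor it preserves composition, $G(f\circ g)=G(f)\circ G(g)$, which is exactly the multiplication of the endomorphism algebra; and $G(\id_T)=\id_{G(T)}$, so the unit is preserved. Hence $f\mapsto G(f)$ is a bijective unital $\kb$-algebra homomorphism, i.e.\ an isomorphism $\End_{\rep Q}T\cong\End_{\rep\Qbar}G(T)$.

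The one point worth a sentence is that Lemma~\ref{lem 6} is stated with reference to objects of the form $M(i,j)$, whereas $T$ may be decomposable; but since $G$ commutes with finite direct sums and every $\Hom$-space in $\rep Q$ (resp.\ $\rep\Qbar$) splits as the direct sum of the $\Hom$-spaces between the indecomposable summands of the source and target, full faithfulness on indecomposables automatically upgrades to full faithfulness on all of $\rep Q$, which is how the lemma should be read. I do not anticipate any genuine obstacle here: the corollary is entirely formal once Lemma~\ref{lem 6} is available.
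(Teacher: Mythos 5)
Your proposal is correct and follows exactly the paper's route: the paper's entire proof is ``This follows since $G$ is full and faithful,'' and you simply spell out why a full and faithful $\kb$-linear functor induces an algebra isomorphism on endomorphism rings, plus the (correct) observation that full faithfulness on indecomposables extends additively to all of $\rep Q$. No gap; your write-up is just a more explicit version of the same argument.
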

\begin{proof}
 This follows since $G$ is full and faithful.
\end{proof}

 To state our next result, we recall that  the \emph{trivial extension} $C\ltimes E$ of an algebra $C$ by a $C$-bimodule $E$ is the algebra whose underlying vector space is the direct sum $C\oplus E$, and whose multiplication is defined as
$(c,e)(c',e')=(cc',ce'+ec')$. For more details, see for example the textbook~\cite[Chapter~6.2]{S}

We also need the notion of the \emph{adjacency quiver $Q(\calt)$ of a triangulation $\calt$ of a polygon.} The vertices of $Q(\calt)$ are given by the line segments in $\calt$. And there is an arrow $\zg\to \zg'$ in $Q(\calt)$ if the line segments $\zg,\zg'$ bound the same triangle in $\calt$  such that $\zg'$ follows $\zg$ when going along the boundary of the triangle in counterclockwise direction. For an example see Figure~\ref{fig 63}.
\begin{figure}
\begin{center}\small
\def\alphanum{\ifcase \xypolynode \or \!\!\!\!\zg(0,1) \or\!\!\!\! \zg(0,2)\or \ \zg(2,3)\!\!\!\! \or \zg(1,3)\!\!\!\!\or \or f\fi}
 \[
\xy/r4pc/: {\xypolygon4"A"{~={100}
 ~<<{@{}}~><{@{-}}
~>>{^{{\alphanum}}}
}},
\POS"A1" \ar@{-}_{\zg(1,2)\!\!} "A3",
\endxy
\quad \longrightarrow\quad
\xy/r4pc/: {\xypolygon8"A"{~={100}~<<{@{}}~><{@{-}}
}},
\POS"A1" \ar@{-}^{\!\!\!\!\zg(0,1)} "A3",
\POS"A3" \ar@{-}^{\!\!\!\!\zg(0,2)} "A5",
\POS"A5" \ar@{-}^{\zg(2,3)\!\!\!\!} "A7",
\POS"A7" \ar@{-}^{\zg(1,3)\!\!\!\!} "A1",
\POS"A1" \ar@{-}_{\zg(1,2)\!\!} "A5",
\endxy \qquad\quad\qquad\qquad \raisebox{2.5pc}{\xymatrix@R15pt@C15pt{\scriptstyle\zg(0,1)\ar[dd]&&\scriptstyle\zg(1,3)\ar[ld]\\&\scriptstyle\zg(1,2)\ar[lu]\ar[rd]\\ \scriptstyle\zg(0,2)\ar[ru]&&\scriptstyle\zg(2,3)\ar[uu]}}
\]
 \caption{Left: The embedding of a polygon $P(Q)$ with triangulation $\calt$ into the polygon $\Pbar$. 
 Right: The quiver $Q(\calt)$. 
 }\label{fig 63}
\end{center}
\end{figure}
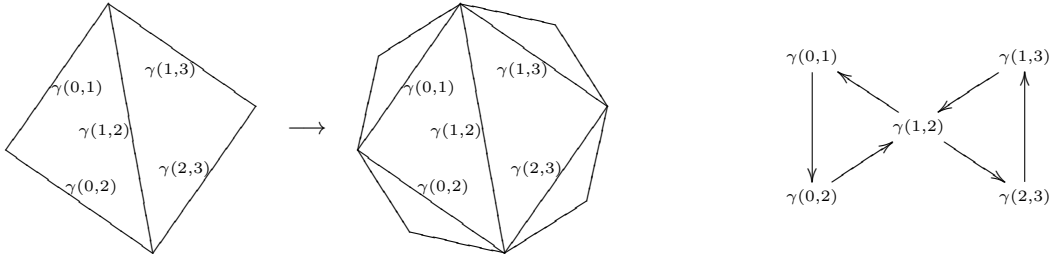

We are ready for the main theorem of this section.
\begin{thm}
 \label{thm 3}
 Let $Q$ be a quiver of type $\mathbb{A}_{n+2}$, and let $T$ be a maximal almost rigid representation of $Q$ with endomorphism algebra $C=\End_{\rep Q}\, T$. 
 Then
 
 (1) $C$ is a tilted algebra of type $\Qbar$.
 
 (2) The quiver of the cluster-tilted algebra 
 $B=C\ltimes\Ext^2_C(DC,C)$ 
 is the adjacency quiver $Q(\calt)$ of the triangulation $\calt=F^{-1}(T)$. Moreover, every arrow of $Q(\calt)$ lies in a (unique) 3-cycle.
 
 (3) The quiver $Q(C)$ of $C$ is obtained from $\calt$ by deleting the arrows of $Q(\calt)$ of the form $\gamma(i,j) \to \gamma(j,k)$ and $\gamma(j,k) \to \gamma(i,j)$, $i<j<k$. 
Moreover, every arrow of $Q(C)$ lies in a (unique) relation.
\end{thm}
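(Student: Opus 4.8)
The plan is to reduce everything to the single assertion that $G(T)$ is a \emph{tilting module} over the hereditary path algebra $\kb\Qbar$. Granting this, part~(1) is immediate: by Corollary~\ref{cor 6} we have $C=\End_{\rep Q}T\cong\End_{\rep\Qbar}G(T)$, and the endomorphism algebra of a tilting module over a hereditary algebra is a tilted algebra by definition, so $C$ is tilted of type $\Qbar$. To prove $G(T)$ is tilting I would check the two standard conditions over a hereditary algebra. First, $G(T)$ has exactly $2n+3$ pairwise non-isomorphic indecomposable summands: $T$ has $2n+3$ summands by Corollary~\ref{cor:catalan}, the functor $G$ is faithful (Lemma~\ref{lem 6}) and sends the pairwise non-isomorphic $M(i,j)$ to the pairwise non-isomorphic $\Mbar(i,j)$, and $2n+3$ is exactly the number of vertices of $\Qbar$. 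Second, $G(T)$ is rigid. Since over a hereditary algebra a rigid module with as many indecomposable summands as there are simples is automatically a tilting module (it is a partial tilting module by rigidity, hence a summand of a tilting module by Bongartz completion, and equality of the number of summands forces it to be tilting itself), these two facts complete part~(1).

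The rigidity of $G(T)$ is the crux, and here the doubling of the polygon does the work. Fix indecomposable summands $A=M(i,j)$ and $B=M(i',j')$ of $T$. Since $T$ is almost rigid, $\Ext^1_{\rep Q}(A,B)$ is either $0$ or one-dimensional with an \emph{indecomposable} middle term; by Proposition~\ref{prop:ses} (and the translation of its rectangle configurations into line segments as in the proof of Theorem~\ref{thm 2}) the second case occurs precisely when the supporting intervals $[i,j]$ and $[i',j']$ are \emph{adjacent}, i.e.\ their union is an interval meeting the two pieces at a single lattice point (after reindexing, $i'=j+1$ or $j'=i-1$), while in all other cases the intervals are disjoint-and-non-adjacent or strictly nested. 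Now $G$ replaces an interval $[i,j]$ of $Q$ by the interval $[i,j]\cap\tfrac12\mathbb Z$ of $\Qbar$, which gains the new half-integer vertices lying strictly between consecutive original vertices but no half-integer beyond its endpoints. Hence two adjacent intervals $[i,j]$ and $[j+1,k]$ become intervals of $\Qbar$ separated by the vertex $j+\tfrac12$, so $\Hom_{\rep\Qbar}$ between $G(A)$ and $G(B)$ vanishes in both directions and in particular $\Ext^1_{\rep\Qbar}(G(A),G(B))=0$; and intervals that are disjoint-and-non-adjacent or strictly nested remain so after this inflation, so by the description of extensions in type $\mathbb A$ again $\Ext^1_{\rep\Qbar}(G(A),G(B))=0$. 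This proves $G(T)$ is rigid, hence tilting, hence part~(1).

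For part~(2), by \cite{ABS} the trivial extension $B=C\ltimes\Ext^2_C(DC,C)$ of the tilted algebra $C$ is a cluster-tilted algebra, and as $C$ has type $\mathbb A_{2n+3}$ so does $B$; moreover, in the isomorphism of \cite{ABS}, $B\cong\End_{\mathcal C}(\widetilde T)$ for $\widetilde T$ the image of the tilting module $G(T)$ in the cluster category $\mathcal C$ of type $\mathbb A_{2n+3}$. By \cite{CCS,BMR}, $\widetilde T$ corresponds to a triangulation $\Tbar$ of a $(2n+6)$-gon $\Pbar$ and the quiver of $B$ is the adjacency quiver $Q(\Tbar)$. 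It remains to identify $\Tbar$ with $\calt$. I would use the embedding $P(Q)\hookrightarrow\Pbar$ which places the $n+3$ vertices of $P(Q)$ as alternate vertices of $\Pbar$, inserting one new vertex between each pair of consecutive vertices of $P(Q)$ (see Figure~\ref{fig 63}): every arc of $\calt$ — boundary edge or diagonal — is sent to a diagonal of $\Pbar$, and since $\calt$ consists of $2n+3$ pairwise non-crossing arcs while a triangulation of a $(2n+6)$-gon has exactly $2n+3$ diagonals, their images form a triangulation $\Tbar$ of $\Pbar$. Under this embedding the $n+1$ triangles of $\calt$ become the $n+1$ internal triangles of $\Tbar$ while the $n+3$ boundary edges of $P(Q)$ cut off the $n+3$ ``ears'' of $\Tbar$; in particular $\Tbar$ has no triangle with exactly one boundary edge, so every arrow of $Q(\Tbar)$ lies in a unique $3$-cycle, and the triangle incidences are preserved so $Q(\Tbar)=Q(\calt)$. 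Combined with the compatibility of the geometric model of $\rep\Qbar$ of Theorem~\ref{thm 1} with the cluster-category model of \cite{CCS}, this identifies $\widetilde T$ with $\Tbar$, giving $Q(B)=Q(\Tbar)=Q(\calt)$, which proves~(2).

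For part~(3), recall from \cite{ABS} — and the explicit description of cluster-tilted algebras of type $\mathbb A$ via triangulated polygons — that the tilted algebra $C$ is recovered from the cluster-tilted algebra $B$ by deleting from $Q(B)=Q(\calt)$ exactly one arrow in each $3$-cycle, namely the arrows spanning $\Ext^2_C(DC,C)$, the two surviving arrows of each $3$-cycle then composing to a length-two zero relation of $C$; since every arrow of $Q(\calt)$ lies in a unique $3$-cycle, every arrow of $Q(C)$ lies in a unique relation. To see which arrow is deleted in the $3$-cycle coming from a triangle of $\calt$ with vertices $i<j<k$, I unwind $C\cong\End_{\rep\Qbar}(G(T))$: an arrow of $Q(C)$ between two summands requires a nonzero morphism between them in $\rep\Qbar$, hence intervals with nonempty overlap, but $G(F(\gamma(i,j)))=\Mbar(i+1,j)$ and $G(F(\gamma(j,k)))=\Mbar(j+1,k)$ have supports $[i+1,j]$ and $[j+1,k]$ separated by the half-integer $j+\tfrac12$, so $\Hom_{\rep\Qbar}$ between them vanishes and no arrow $\gamma(i,j)\leftrightarrow\gamma(j,k)$ occurs in $Q(C)$; on the other hand $\Mbar(i+1,j)$ and $\Mbar(j+1,k)$ are each nested in $\Mbar(i+1,k)=G(F(\gamma(i,k)))$ and consecutive to it in the fan of $\calt$ at the vertices $i$, respectively $k$, so the arrows joining them to $\gamma(i,k)$ do come from (compositions of) irreducible morphisms over $\Qbar$ and survive. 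Hence $Q(C)$ is obtained from $\calt$ precisely by deleting the arrows between $\gamma(i,j)$ and $\gamma(j,k)$ with $i<j<k$. The main obstacle I foresee is the rigidity argument together with the precise bookkeeping of the embedding $P(Q)\hookrightarrow\Pbar$: one must make rigorous how the inflation functor $G$ and the doubling of the polygon turn the indecomposable-middle extensions of the almost rigid module $T$ into split sequences over $\Qbar$, and check that the adjacency-quiver identification in part~(2) is correct on orientations.
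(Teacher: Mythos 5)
Your argument is correct, and for the crucial step it takes a genuinely different route from the paper. Both proofs reduce parts (1)--(3) to the single claim that $G(T)$ is a tilting module over the hereditary algebra $\kb\Qbar$, and then conclude via Corollary~\ref{cor 6} and \cite{ABS}; the difference is how that claim is established. The paper never verifies rigidity of $G(T)$ directly: it passes to the cluster category $\cbar$, observes that the image $\caltbar$ of $\calt$ under the embedding $P(Q)\hookrightarrow\Pbar$ is a triangulation of $\Pbar$, invokes \cite{CCS} to get that $\widetilde G(T)$ is a cluster-tilting object, and then descends via \cite{BMRRT,ABS2} to conclude that the module $G(T)$ inducing it is tilting. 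You instead prove tilting-ness inside $\rep\Qbar$: the summand count $2n+3=|\Qbar_0|$ from Corollary~\ref{cor:catalan} plus Bongartz completion reduces everything to rigidity, and rigidity follows because the only nonsplit extensions permitted by almost rigidity (Proposition~\ref{prop:ses} together with the crossing criterion from the proof of Theorem~\ref{thm 2}) occur between \emph{adjacent} intervals, which the inflation $M(i,j)\mapsto\Mbar(i,j)$ separates by the new half-integer vertex $j+\tfrac12$, while gaps and nestings are preserved. Your approach buys a more self-contained, purely module-theoretic verification that makes visible \emph{why} the doubling of the quiver kills exactly the indecomposable-middle extensions; the paper's approach buys brevity by outsourcing the work to the triangulation model of the cluster category, and it produces the triangulation $\caltbar$ as a byproduct, which it then reuses for part (2). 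For part (3) the two arguments are again parallel but not identical: the paper shows the morphism $\zg(j,k)\to\zg(j,i)$ vanishes in $\cc$ because $\zg(j,i)\notin\cale$, whereas you show $\Hom_{\rep\Qbar}(\Mbar(i+1,j),\Mbar(j+1,k))=0$ by the same half-integer separation; given that part (2) already pins down $Q(B)=Q(\calt)$ and that exactly one arrow per $3$-cycle is deleted, ruling out this one arrow suffices, so your argument is complete. The only places where you are (self-admittedly) schematic --- the compatibility of the \cite{CCS} diagonal model with the embedding $P(Q)\hookrightarrow\Pbar$ and the orientation bookkeeping in the adjacency quiver --- are treated at the same level of detail in the paper itself, so I do not regard them as gaps.
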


\begin{example}\label{ex 6}
 Let $Q$ be the quiver of Figure~\ref{fig 6} and let $T$ be the maximal almost rigid representation $T=2\oplus M(1,2)\oplus M(2,3)\oplus 1\oplus 3$.
 The corresponding triangulation $\calt$ is shown left in Figure~\ref{fig 63}.
 Then $C=\End_{\rep Q}\,T$ is given by the quiver in Figure \ref{fig 61}(top)  bound by the relations  $\za\zb=0$ and $\zg\zd=0$. 
Its cluster-tilted algebra $B=C\ltimes\Ext^2_C(DC,C)$ 
is given by the quiver in Figure~\ref{fig 61}(bottom) bound by the relations $\za\zb=\zb\ze=\ze\za=0$ and $\zg\zd=\zd\zs=\zs\zg=0$.

\end{example}

\begin{proof}[Proof of Theorem~\ref{thm 3}] Let  $\Qbar$ be the quiver of type $\mathbb{A}_{2n+3}$ defined in the beginning of this section.
The cluster category $\cbar$ of $\Qbar$ can be realized in two ways. For one, $\cbar=\cald^b(\rep \Qbar)/\tau^{-1}[1]$ is an orbit category of the bounded derived category of representations of $\Qbar$, see \cite{BMRRT}. 
 On the other hand, $\cbar$ is the category of diagonals in a $(2n+6)$-gon $\Pbar$, see \cite{CCS}. Using the realization via the derived category, we get a functor
 $\widetilde{G}\colon\rep Q \to \cbar$, which maps a representation to its orbit in the cluster category. 
 This functor is not full, since, in the cluster category, there exist morphisms between the orbits of two representations that are not given by morphisms between the representations themselves. 
 
 On the other hand, using the realization via diagonals of $\Pbar$, we see that the composition $\widetilde{G}\circ F\colon \cc\to \cbar$ is a functor from the category of line segments (=diagonals and boundary edges)
 in the $(n+3)$-gon $P(Q)$ to the category of diagonals (not including boundary edges) in the  $(2n+6)$-gon $\Pbar$.
 We now explain this construction on the level of the polygons. We embed $P(Q)$ into $\Pbar$ by adding one vertex for each boundary edge; thus each boundary edge 
 of $P(Q)$ becomes an interior edge of $\Pbar$ which bounds a triangle whose other two sides are boundary edges, as follows, see Figure~\ref{fig 63} for an example.
 \[ \xymatrix@R=0pt{&&&&&\bullet\ar@{-}[rd]\\
 i\ar@{-}[rr]&&j&\mapsto&i\ar@{-}[rr]\ar@{-}[ru]&&j}\]

 The functor $\widetilde{G}\circ F$ induces a map $\calt\to \caltbar$ from triangulations of $P(Q)$ that include all boundary edges to triangulations of $\Pbar$ that do not include any boundary edges.  
 
 Now  let $T$ be a maximal almost rigid representation of $Q$. By Theorem~\ref{thm 2},  there exists a  triangulation $\calt$ of $P(Q)$ such that $T=F(\calt)$.  Let $C=\End_{\rep Q}\, T$ be its endomorphism algebra. Denote by $\caltbar$ the triangulation (without boundary edges) of $\Pbar$ given as the image of $\calt$ under $\widetilde{G}\circ F$. We have the following  commutative diagram. 
 
\[\xymatrix@C60pt@C50pt{\cc\ar[d]_F\ar[drr]^{\widetilde{G}\circ F}\\ \rep Q \ar@/_15pt/[rr]_{\widetilde{G}}\ar[r]^G &\rep \Qbar \ar[r]^\iota & \cbar}
\]
 
 Thus the object $\widetilde {G}(T)=\iota\circ G(T)$ in the cluster category is given by the triangulation $\caltbar$. Therefore $\widetilde{G}(T)$ is a cluster-tilting object in $\cbar$, by \cite{CCS}.  Let $B=\End_{\cbar}\, \widetilde {G}(T)$ denote its cluster-tilted algebra.
 
 Furthermore, we know that $\widetilde{G}(T)$ is induced from the representation	 $G(T)\in \rep \Qbar$, and therefore $G(T)$ is a tilting module over the path algebra $\kb \Qbar$ of $\Qbar$, by \cite{BMRRT,ABS2}. Its endomorphism algebra $\Cbar=\End_{\rep \Qbar}\,G(T)$ is a tilted algebra of type $\Qbar$ and its trivial extension $\Cbar\ltimes \Ext^2_{\Cbar}\,(D\Cbar, \Cbar)$ is the cluster-tilted algebra $B$, by \cite{ABS}. Corollary~\ref{cor 6} implies that $C\cong \Cbar$, so $C$ is tilted of type~$\Qbar$ and $B\cong C\ltimes\Ext^2_C(DC,C)$. 
 The quiver of $B$ is the quiver of the triangulation $\caltbar$, by \cite{CCS}, and thus it is also the quiver of the triangulation $\calt$. This shows (1) and (2).
 
 To show (3) we need to consider the morphisms in the category $\cc$. By definition, they are given by composition of pivots of the form $\zg(i,j)\to \zg(i,R^{-1}(j))$ or $\zg(i,j)\to \zg(R^{-1}(i),j)$. Given a triangle in $\calt$ with vertices $i<j<k$ in the labeling of $P(Q)$, 
 and such that traveling from $i$ to $j$ to $k$ to $i$ is going counterclockwise around the triangle, there exists a morphism $\zg(i,j)\to \zg(i,k)$ given by a sequence of pivots that fix the endpoint $i$, and a morphism $\zg(i,k)\to\zg(j,k)$  given by a sequence of pivots that fix the endpoint $k$. In the cluster category $\Cbar$ there also is a nonzero morphism  $\zg(j,k)\to \zg(j,i)$ given by a sequence of pivots fixing the endpoint $j$, 
 however, this morphism is zero in the category $\cc$ because the diagonal $\zg(j,i)$ is not in $\cale$, since $j>i$. 

Similarly, when the triangle $\calt$ has vertices $i<j<k$ in the labeling of $P(Q)$ such that traveling from $i$ to $j$ to $k$ is going clockwise around the triangle, there is a morphism $\gamma(i,k) \to \gamma(i,j)$ given by a sequence of pivots that fix $i$, and there is a morphism $\gamma(k,j) \to \gamma(i,k)$ given by a sequence of pivots that fix $k$. 
 \end{proof}
 \begin{figure}
\begin{center}
 \[\xymatrix@R0pt{Q(C)&
3\ar[r]^(0.35)\za&M(2,3)\ar[r]^(0.6)\zb &2&M(1,2)\ar[l]_(0.6)\zd& 1\ar[l]_(0.35)\zg}\]
 \[\xymatrix@R0pt{Q(B)&
3\ar[r]_(0.35)\za&M(2,3)\ar[r]_(0.6)\zb &2\ar@/_{10pt}/[ll]_\ze\ar@/^{10pt}/[rr]^\zs&M(1,2)\ar[l]^(0.6)\zd& 1\ar[l]^(0.35)\zg}\]
\caption{The quivers of $C$ and $B$ in Example \ref{ex 6}. To obtain $Q(C)$ from $Q(B)$ remove the arrows $\epsilon$ and $\sigma$ as described in part (3) of Theorem~\ref{thm 3}}
\label{fig 61}
\end{center} 
\end{figure}

\begin{remark}
 Not every tilted algebra of type $\mathbb{A}_{2n+3} $ is realizable as the endomorphism algebra of a maximal almost rigid representation of type $\mathbb{A}_{n+2}$. For example the tilted algebra given by the quiver $\xymatrix{1\ar[r]^\za&2\ar[r]^\zb&3&4\ar[l]&5\ar[l]}$ with relation $\za\zb=0$ is not, because not every arrow lies in a relation.
\end{remark}

\begin{remark}
 It would be interesting to see how the maximal almost rigid representations behave for other quivers or more generally for bound quiver algebras. The first example below shows that in Dynkin type $\mathbb{D}_4$ the number of summands in a maximal almost rigid representation is not always the same.
\end{remark}

\begin{example}\label{ex D}
 The $\mathbb{D}_4$ quiver $Q$ \[\xymatrix@R0pt{&&3\\1\ar[r]&2\ar[ru]\ar[rd]\\&&4}\]
  admits the following maximal almost rigid representations, one of which has 7 direct summands and the other has 9. 
 \[T=
P(1)\oplus 
P(2)\oplus
P(3)\oplus
P(4)\oplus\begin{smallmatrix}
 2\\4
\end{smallmatrix}\oplus
\begin{smallmatrix}
 2\\3
\end{smallmatrix}
\oplus
\begin{smallmatrix}
 1
\end{smallmatrix}
\qquad \textup{and}\qquad T'=
P(1)\oplus
P(3)\oplus
P(4)\oplus
\begin{smallmatrix}
 2\\4
\end{smallmatrix}\oplus
\begin{smallmatrix}
 2\\3
\end{smallmatrix}\oplus
\begin{smallmatrix}
1\\ 2\\4
\end{smallmatrix}\oplus
\begin{smallmatrix}
 1\\2\\3
\end{smallmatrix}
\oplus
\begin{smallmatrix}
 2
\end{smallmatrix}\oplus
\begin{smallmatrix}
 1
\end{smallmatrix}
\]
 The endomorphism algebra of $T$ is tilted of affine type $\widetilde{\mathbb{E}}_6=\Qbar$, which is encouraging for a possible generalization of the type $\mathbb{A}$ results.  However, the endomorphism algebra  of $T'$ does not seem to be a tilted algebra. 
\end{example}

\begin{example}
\label{ex:gentlealgebra}
Consider the $\mathbb{D}_4$ quiver 
\[\xymatrix@R0pt{&&3\\1\ar[r]^(0.5)\alpha&2\ar[ru]^(0.5)\beta\ar[rd]\\&&4}\]
bound by the relation $\alpha \beta=0$. The Auslander--Reiten quiver is illustrated in 
Figure~\ref{fig:gentlealgebra}. 

If an indecomposable representation $M$ has at most one incoming arrow and one outgoing arrow in the Auslander--Reiten quiver, then $M$ is a direct summand of every maximal almost rigid representation. 
Therefore, the five indecomposable representations 
written in bold 
in Figure~\ref{fig:gentlealgebra}  
 are direct summands of every maximal almost rigid representation. 

The maximal almost rigid representations are as follows. Each of them has $7$ direct summands.
\begin{align*}
\mathbf{\textcolor{violet}{3}}
\oplus 
\mathbf{\textcolor{violet}{4}} 
\oplus 
\textcolor{violet}{
\begin{matrix}\mathbf{2}\\[-1mm]\mathbf{3}\end{matrix}  
}
\oplus  
\textcolor{violet}{
\begin{matrix}\mathbf{1}\\[-1mm]\mathbf{2}\\[-1mm]\mathbf{4}\end{matrix} 
}
\oplus 
\mathbf{\textcolor{violet}{1}}
&
\oplus 
\begin{matrix}
2\\[-1mm]4
\end{matrix}
\oplus
\begin{matrix}2 \\[-1mm]3 \, 4 \end{matrix}
\\ 
\mathbf{\textcolor{violet}{3}}
\oplus 
\mathbf{\textcolor{violet}{4}} 
\oplus 
\textcolor{violet}{
\begin{matrix}\mathbf{2}\\[-1mm]\mathbf{3}\end{matrix}  
}
\oplus  
\textcolor{violet}{
\begin{matrix}\mathbf{1}\\[-1mm]\mathbf{2}\\[-1mm]\mathbf{4}\end{matrix} 
}
\oplus 
\mathbf{\textcolor{violet}{1}}
&
\oplus 
\begin{matrix}
2\\[-1mm]4
\end{matrix}
\oplus
2
\\ 
\mathbf{\textcolor{violet}{3}}
\oplus 
\mathbf{\textcolor{violet}{4}} 
\oplus 
\textcolor{violet}{
\begin{matrix}\mathbf{2}\\[-1mm]\mathbf{3}\end{matrix}  
}
\oplus  
\textcolor{violet}{
\begin{matrix}\mathbf{1}\\[-1mm]\mathbf{2}\\[-1mm]\mathbf{4}\end{matrix} 
}
\oplus 
\mathbf{\textcolor{violet}{1}}
&
\oplus 
2
\oplus 
\begin{matrix}
1\\[-1mm]2
\end{matrix}
\end{align*}
\end{example}
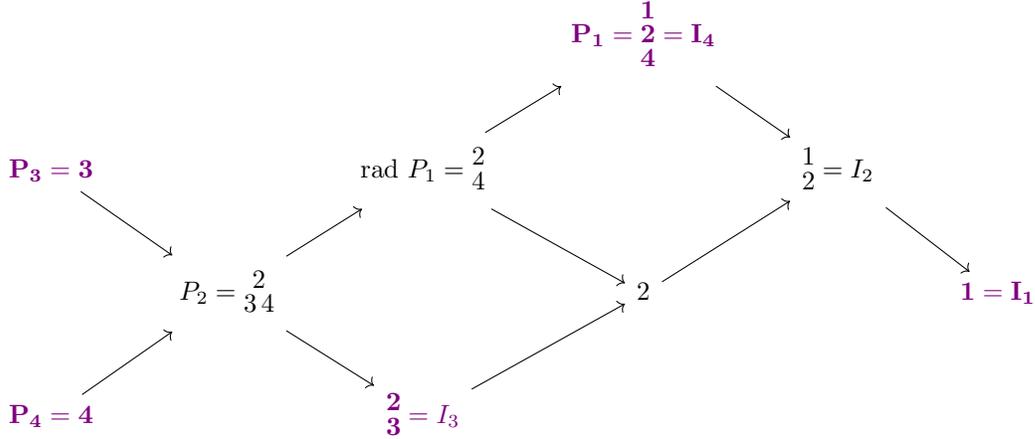
\begin{figure}
\begin{tikzcd}
& 
&
&
\textcolor{violet}
{\mathbf
{
P_1=
\begin{matrix}
\mathbf{1} \\[-1mm]
\mathbf{2} \\[-1mm]
\mathbf{4}
\end{matrix}
=I_4
}
}
 \ar[dr]
&& 
&
& 
\\ 
\textcolor{violet}{\mathbf{P_3=3}}
\ar[dr] 
&
& 
\text{rad}~P_1=
\begin{matrix}2\\[-1mm]4
\end{matrix} 
 \ar[ur] 
 \ar[dr] 
&
&  
\begin{matrix}1 \\[-1mm]2 \end{matrix} 
=I_2
 \ar[dr]
&
& 
\\
& P_2= \begin{matrix}2 \\[-1mm]3 \, 4 \end{matrix}
\ar[ur]
\ar[dr]
&
& 
2 
\ar[ur] 
&
& 
\textcolor{violet}{\mathbf{1=I_1}}
\\
\textcolor{violet}{\mathbf{P_4=4}} 
\ar[ur] 
&
& 
\textcolor{violet}
{
\begin{matrix}\mathbf{2} \\[-1mm] \mathbf{3}\end{matrix} 
=I_3
}
\ar[ur]
&
& 
\end{tikzcd}
\caption{Auslander--Reiten quiver for the bound quiver in  
Example~\ref{ex:gentlealgebra}
}
\label{fig:gentlealgebra}
\end{figure}
\section{Representation theoretic version of the Cambrian lattice and the $\eta$ map} \label{sect 7}
In this section, we come back to our initial motivation and give a new realization of the $\eta$ map in terms of maximal almost rigid representations.

 \subsection{Poset structure on $\mar$}
A \emph{flip} is a transformation of a triangulation $\calt$ that removes a diagonal $\gamma$ and replaces it with a (unique) different diagonal $\gamma'$ that, together with the remaining diagonals, forms a new triangulation $\calt'$.
Note that the two diagonals $\gamma$ and $\gamma'$ involved in such a flip must cross.

In~\cite{Reading}, Reading defined the following poset structure on the set of triangulations of $P(Q)$. A  triangulation $\calt_1$ is said to be {\em covered by} a triangulation $\calt_2$  if there exist two diagonals $\zg_1\ne\zg_2$  such that $\calt_1\setminus\{\zg_1\}=\calt_2\setminus\{\zg_2\}$ and 
the slope of $\zg_1$ is smaller than the slope of $\zg_2$. 
This covering relation induces a partial order on the set of triangulations, and the resulting poset is called the \emph{Cambrian lattice of type} $\mathbb{A}$. 
See also the survey paper~\cite{ReadingBeyond}.

\begin{definition}
 \label{def poset}
 We define a partial order on $\mar$ as follows. 
 For $T_1,T_2\in \mar$, we say that~$T_1 $ is \emph{covered by} $T_2$ if there exist indecomposable summands $M_i$ of $T_i$ such that $T_1/M_1\cong T_2/M_2$ and there is a non-split short exact sequence
 \[\xymatrix{0\ar[r]& M_1\ar[r]^(0.4)f& E\oplus E'\ar[r]^(0.6)g &M_2\ar[r] &0}\]
 with $E,E'$ indecomposable summands of $T_1/M_1$.
\end{definition}

\begin{remark}
 In the short exact sequence in Definition~\ref{def poset}, the morphism $f$ is a minimal $\add (T_1/M_1)$ approximation of $M_1$. 
 For a definition, see for example~\cite{BMRRT}. 
\end{remark}
 
\begin{remark}
 It follows from Theorem~\ref{thm 2} that every maximal almost rigid representation contains all hooks and all cohooks of $Q$, since these correspond to boundary edges of the triangulation of~$P(Q)$ by Corollary~\ref{cor 38}. 
Therefore, there is a unique minimal element in the poset $\mar$ given by the basic representation whose  direct summands are  the indecomposable projective representations together
 with the hooks and cohooks.
 The covering relation given by the short exact sequence in Definition~\ref{def poset} then gives an algorithm to construct all maximal almost rigid representations using approximations. 
 The unique maximal element in the poset on $\mar$ is given by the basic representation whose  direct summands are  the indecomposable injective representations together
 with the hooks and cohooks.
\end{remark}
 
 \subsection{The poset on $\mar$ is a Cambrian lattice}

We have the following result.

\begin{thm}
\label{thm 4}
Let $Q$ be a Dynkin quiver of type $\mathbb{A}_{n+2}$. Then the poset on $\mar$ is isomorphic to a Cambrian lattice.
\end{thm}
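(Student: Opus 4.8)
The plan is to show that the bijection $F\colon\{\textup{triangulations of }P(Q)\}\to\mar$ of Theorem~\ref{thm 2} is in fact an isomorphism of posets, where the triangulations carry Reading's Cambrian order recalled above and $\mar$ carries the order of Definition~\ref{def poset}. Since both orders are defined by their covering relations, it suffices to check that $F$ and $F^{-1}$ send covers to covers; and since $F$ is a bijection on the finite sets involved, it is enough to show that a cover in one poset maps to a cover in the other. So the real content is: a flip of triangulations $\calt_1\lessdot\calt_2$ (in the Cambrian sense, i.e.\ replacing a diagonal $\zg_1$ by a crossing diagonal $\zg_2$ of strictly larger slope) corresponds under $F$ exactly to a cover $T_1\lessdot T_2$ in $\mar$ in the sense of Definition~\ref{def poset}.

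First I would unwind what a flip looks like on the representation side. If $\calt_1,\calt_2$ differ only in that $\zg_1=\zg(a,b)$ is replaced by $\zg_2$, then the two crossing diagonals $\zg_1,\zg_2$ are the diagonals of a quadrilateral whose four sides lie in $\calt_1\cap\calt_2$; write the four sides and the two diagonals. Setting $M_i=F(\zg_i)$ and $E,E'$ the images of the two sides of the quadrilateral that form the ``opposite pair'' relevant to the extension, Proposition~\ref{prop:ses}(b) together with the analysis already carried out in the proof of Theorem~\ref{thm 2} (crossing diagonals $\leftrightarrow$ short exact sequences with decomposable middle term $0\to M(i,j)\to E_1\oplus E_2\to M(i',j')\to 0$ where $E_1=M(i,R^{-t}(j))$, $E_2=M(R^{-s}(i),j)$) shows that there is a non-split short exact sequence $0\to M_1\to E\oplus E'\to M_2\to 0$ or $0\to M_2\to E\oplus E'\to M_1\to 0$, with $E,E'$ summands of $T_1/M_1=T_2/M_2$, and $T_1/M_1\cong T_2/M_2$ since $\calt_1$ and $\calt_2$ agree off $\{\zg_1,\zg_2\}$. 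It remains to match the \emph{direction} of the short exact sequence with the \emph{slope} comparison: I would verify that ``$\zg_1$ has smaller slope than $\zg_2$'' is equivalent to ``the short exact sequence runs $0\to M_1\to E\oplus E'\to M_2\to 0$'' (rather than the other way). This is where the geometric model and the stability function of Section~\ref{sect stab} pay off: the inclusion $M_1\hookrightarrow E\oplus E'$ forces $\phi(M_1)<\phi(E\oplus E')$-type inequalities, and because $\vec G$ is additive along the short exact sequence (the central charge $Z$ is additive, $Z([E\oplus E'])=Z([M_1])+Z([M_2])$), the vector $\vec\zg_2$ is the appropriate ``sum'' of the side vectors; comparing arguments of the two diagonal vectors of the quadrilateral against the common four side vectors gives exactly the slope inequality. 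Concretely, in the quadrilateral, the diagonal that is ``more counterclockwise'' (larger slope) is the one obtained as the end term $M_2$ of the sequence whose middle term is the sum of the other two vertices, because going around the quadrilateral the four side-vectors sum to zero and the two diagonal-vectors are the two ways of splitting that sum.

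The key steps in order: (i) reduce to a statement about covering relations, using that $F$ is a bijection of finite posets and covers generate the order; (ii) describe a Cambrian flip $\calt_1\lessdot\calt_2$ as a quadrilateral in $P(Q)$ with its two crossing diagonals $\zg_1,\zg_2$ and four sides in $\calt_1\cap\calt_2$; (iii) apply Theorem~\ref{thm 2} / Proposition~\ref{prop:ses}(b) to produce the non-split short exact sequence $0\to F(\zg_1)\to E\oplus E'\to F(\zg_2)\to 0$ up to direction, with $E,E'$ the images of the two ``diagonal-adjacent'' sides and $T_1/F(\zg_1)\cong T_2/F(\zg_2)$; (iv) pin down the direction of the sequence using the additivity of the central charge $Z$ and the slope comparison, i.e.\ show ``slope $\zg_1<$ slope $\zg_2$'' $\iff$ ``$F(\zg_1)$ is the sub and $F(\zg_2)$ is the quotient''; (v) conclude $F(\calt_1)\lessdot F(\calt_2)$ in $\mar$, and conversely that every cover in $\mar$ arises this way (again from Theorem~\ref{thm 2}, since a cover in $\mar$ forces $T_1/M_1\cong T_2/M_2$, hence the corresponding triangulations differ in a single diagonal, hence are related by a flip, and step (iv) gives the slope inequality). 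Then $F$ is an isomorphism onto Reading's Cambrian lattice, which is in particular a lattice, so the poset on $\mar$ is a Cambrian lattice.

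The main obstacle I anticipate is step (iv): carefully proving that the \emph{orientation} of the short exact sequence is governed by the slope comparison, uniformly across all positions of the quadrilateral (including degenerate-looking cases where some side of the quadrilateral is a boundary edge, or where the quadrilateral straddles the horizontal line $L$). One has to be careful that ``slope'' here means the slope used by Reading, and check it is consistent with the argument $\theta$ of $\vec G$ from Section~\ref{sect stab}; the cleanest route is probably to note that for two crossing diagonals $\zg(a,b)$, $\zg(c,d)$ of a convex quadrilateral with $a<c<b<d$ (in the polygon's cyclic order), the relevant short exact sequence always has the diagonal joining the ``outer'' pair as quotient, and then translate this combinatorial description of $\zg_2$ into the statement that $\zg_2$ has larger slope, using convexity of $P(Q)$ and the fact that the labels increase left-to-right. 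A secondary, more bookkeeping-heavy point is making sure $E$ and $E'$ really are summands of the common complement $T_1/M_1$ and not just abstract representations — but this is immediate from the quadrilateral picture, since the two sides whose images are $E,E'$ are diagonals or boundary edges of both $\calt_1$ and $\calt_2$.
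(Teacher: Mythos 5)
Your overall strategy coincides with the paper's: by Theorem~\ref{thm 2} the map $F$ is a bijection onto $\mar$, so it suffices to match the covering relations, and a cover on either side amounts to a flip in a quadrilateral, i.e.\ to a non-split short exact sequence $0\to M_1\to E\oplus E'\to M_2\to 0$ with decomposable middle term whose summands come from two opposite sides of that quadrilateral. Your steps (i)--(iii) and (v) are sound. The genuine gap is exactly where you predicted it, in step (iv), and both mechanisms you propose for it fail. The ``cleanest route'' --- that the quotient is always the diagonal joining the \emph{outer} pair of endpoints --- is false. For $Q=1\to 2\to 3$ the quadrilateral is $0,1,2,3$ with $1,2$ above the line $L$, and the sequence is $0\to M(2,3)\to M(1,3)\oplus S(2)\to M(1,2)\to 0$, so the sub is $\zg(1,3)$ and the quotient is $\zg(0,2)$: the endpoint pairs $\{0,2\}$ and $\{1,3\}$ interleave in the linear order and neither is ``outer''. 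Worse, reversing the orientation to $1\ot 2\ot 3$ keeps all labels but swaps sub and quotient (the sequence becomes $0\to M(1,2)\to M(1,3)\oplus S(2)\to M(2,3)\to 0$), so \emph{no} rule phrased purely in terms of the linear order of the endpoint labels can be correct; the up/down placement of the vertices, i.e.\ the orientation of $Q$, must enter. Your fallback via the central charge does not close the gap either: the identity $Z([M_1])+Z([M_2])=Z([E])+Z([E'])$ is symmetric in $M_1\leftrightarrow M_2$ and so cannot determine the direction, while the inequality $\phi(L)<\phi(M)$ for subobjects is precisely the stability statement that the paper deduces from the fact you are trying to establish (that nonzero morphisms increase slope), so invoking it here is circular.

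What is missing is the homological step the paper isolates as Lemma~\ref{lem 7}: applying $\Hom(M_1,-)$ to the sequence and using $\Ext(M_1,M_1)=0$ together with the fact that in type $\mathbb{A}$ all Hom spaces between indecomposables are at most one-dimensional shows that the middle term is decomposable \emph{if and only if} $\Hom(M_1,M_2)\ne 0$. By Theorem~\ref{thm 1}, a nonzero morphism $M_1\to M_2$ is a composition of pivots, each of which moves an endpoint to its counterclockwise neighbor and hence strictly increases the slope; conversely, if the two diagonals cross and the slope increases, such a pivot sequence exists. This is the correct, orientation-sensitive replacement for your step (iv): the quotient is the diagonal receiving a nonzero morphism from the sub, and that is exactly the one of larger slope. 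With that lemma in place, the rest of your argument goes through as written.
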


\begin{proof}
By Theorem \ref{thm 2}, the functor $F$  is a bijection from triangulations of $P(Q)$ to $\mar$. 
 It only remains to prove that $F$ preserves the covering relations. This is done in the next lemma.
\end{proof}

\begin{lemma}\label{lem 7} Let $Q$ be a quiver of type $\mathbb{A}_{n+2}$. Let $M_1, M_2$ be two indecomposable representations  of $Q$ and $\zg_1,\zg_2$ be the line segments in $P(Q)$ such that $F(\zg_i)=M_i$, $i = 1,2$.
Suppose there is a non-split short exact sequence $0\to M_1\to E\to M_2\to 0$. Then the following conditions are equivalent.
\begin{enumerate}
\item 
$\Hom(M_1,M_2)\ne 0$.
\item $E$ is decomposable.
\item $\zg_1$ and $\zg_2$ cross and
the slope of $\gamma_2$ is larger than the slope of $\gamma_1$. 
\end{enumerate}
\end{lemma}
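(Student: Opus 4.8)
The plan is to establish the equivalences by a short cycle $(1)\Rightarrow(3)\Rightarrow(2)\Rightarrow(1)$, relying throughout on the combinatorial dictionary provided by Theorem~\ref{thm 1} and the description of short exact sequences in Proposition~\ref{prop:ses}. Write $M_1 = M(i,j)$ and $M_2 = M(i',j')$, so that $\zg_1 = \zg(i-1,j)$ and $\zg_2 = \zg(i'-1,j')$. The first thing I would do is unwind what it means, in terms of the Auslander--Reiten quiver, for $\Hom(M_1,M_2)\ne 0$: in type $\mathbb{A}$ a nonzero homomorphism between the interval modules $M(i,j)$ and $M(i',j')$ exists precisely when $i'\le i\le j'\le j$ with $i\le j'$ (the "staircase" condition — the supports overlap and $M_2$ is reachable from $M_1$ by a sequence of pivots fixing the right endpoint followed by pivots fixing the left, i.e. $i'=R^{-s}(i)-?$ in the polygon labels). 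Translating this overlap-but-not-nested condition to the line segments $\zg(i-1,j)$ and $\zg(i'-1,j')$ shows exactly that the two endpoints interleave around the boundary of $P(Q)$, i.e. $\zg_1$ and $\zg_2$ cross. This gives $(1)\Rightarrow$(crossing part of $(3)$); the inequality on the slopes comes from orienting things correctly: since $i' < i$ and $j' < j$, the segment $\zg_2$ sits "counterclockwise" of $\zg_1$, and because the vertices $0,\dots,n+2$ are placed left-to-right, a counterclockwise rotation of an oriented segment that keeps it in the right half-plane strictly increases the angle with the positive real axis, hence the slope. (This is the same geometric fact already used implicitly when defining pivots via $R^{-1}$, and it is the reason the stability function of Section~\ref{sect stab} is well-defined.)

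Next I would do the converse direction $(3)\Rightarrow(1)$, which is essentially the reverse of the same computation: if $\zg_1,\zg_2$ cross with $\zg_2$ of larger slope, then the endpoint labels satisfy $i'-1 < i-1 < j' < j$ (reading off the crossing together with the slope comparison to fix which of the two "crossing" configurations occurs), which is exactly the staircase condition guaranteeing a nonzero morphism $M(i,j)\to M(i',j')$. For the remaining equivalence with $(2)$, I would invoke Proposition~\ref{prop:ses}: given the non-split sequence $0\to M_1\to E\to M_2\to 0$, the middle term $E$ is decomposable if and only if the four modules $M_1,M_2,E_1,E_2$ form a full rectangle in the AR-quiver (no missing corner), and by Theorem~\ref{thm 1} this rectangle condition is exactly the statement — already extracted in the proof of Theorem~\ref{thm 2} — that $\zg_1$ and $\zg_2$ are crossing diagonals, with $E_1 = F(\zg(i-1,j'))$, $E_2 = F(\zg(i'-1,j))$. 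Since we are assuming a non-split extension exists, the only alternative to "decomposable middle term" is "indecomposable middle term", which by the left-hand picture of Figure~\ref{fig 54} corresponds to $\zg_1,\zg_2$ sharing an endpoint rather than crossing; so $(2)\Leftrightarrow$ crossing, and combined with the slope orientation already pinned down this closes the cycle.

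The main obstacle I anticipate is bookkeeping of orientations and inequalities: one must be careful that "crossing" of the unoriented segments alone is not equivalent to $\Hom(M_1,M_2)\ne 0$ — rather, crossing is equivalent to $\Ext^1(M_2,M_1)\ne 0$ with decomposable middle term, which is symmetric in $M_1,M_2$, whereas $\Hom(M_1,M_2)\ne 0$ is not symmetric. The asymmetry is resolved precisely by the slope comparison in $(3)$, so the delicate point is to check that, among the two ways a crossing can be oriented relative to the fixed non-split sequence $0\to M_1\to E\to M_2\to 0$, exactly the one with $\operatorname{slope}(\zg_2) > \operatorname{slope}(\zg_1)$ yields the nonzero $\Hom$ — and that the other orientation would force $M_1$ and $M_2$ to be swapped, contradicting the direction of the given sequence. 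I would handle this by fixing, once and for all, the AR-quiver picture from Figure~\ref{fig 54}(right): there $A = M_1$ is the source-corner and $B = M_2$ the sink-corner of the rectangle, which dictates $i' \le i$ and $j' \le j$, and then the slope statement follows from the left-to-right placement of the polygon vertices as above. Everything else is routine translation through the functor $F$.
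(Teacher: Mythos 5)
Your overall route differs from the paper's. The paper first proves $(1)\Leftrightarrow(2)$ purely homologically: applying $\Hom(M_1,-)$ to the given sequence, using $\Ext(M_1,M_1)=0$ and the fact that Hom-spaces between indecomposables in type $\mathbb{A}$ are at most one-dimensional, it identifies $\dim\Hom(M_1,E)$ with the number of indecomposable summands of $E$, so that $\Hom(M_1,M_2)\neq 0$ exactly when $E$ has a second summand. Only afterwards does it extract the crossing in $(3)$ from $(2)$. Your cycle $(1)\Rightarrow(3)\Rightarrow(2)\Rightarrow(1)$ instead puts all the weight on the step $(1)\Rightarrow(3)$, and that step as written has a genuine gap.

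The gap: you claim that $\Hom(M_1,M_2)\ne 0$ translates into an ``overlap-but-not-nested'' condition on the supports and hence into $\zg_1,\zg_2$ crossing. This fails on two counts. First, the criterion $i'\le i\le j'\le j$ for $\Hom(M(i,j),M(i',j'))\ne 0$ is orientation-dependent and is simply false for general type $\mathbb{A}$ quivers (for $1\to 2\ot 3$ one has $\Hom(M(2,3),M(1,2))=0$ even though $1\le 2\le 2\le 3$); the correct criterion is the existence of a nonzero path of pivots, which you mention parenthetically but do not actually use. Second, and more seriously, a nonzero morphism does not force a crossing: a single pivot $\zg(i,j)\to\zg(i,R^{-1}(j))$ is an irreducible, hence nonzero, morphism between modules whose segments share the endpoint $i$ and do not cross. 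Ruling out the shared-endpoint configuration requires the standing hypothesis that a non-split extension $0\to M_1\to E\to M_2\to 0$ exists --- either via the paper's long-exact-sequence argument, or by noting that in the ``rectangle with a missing corner'' of Proposition~\ref{prop:ses}(a) every path from $M_1$ to $M_2$ is killed by the mesh relations. You do eventually appeal to the full rectangle of Figure~\ref{fig 54}(right) to fix $M_1$ as the source corner, but invoking that picture already presupposes that $E$ is decomposable, i.e.\ condition $(2)$, which is exactly what your cycle has not yet established at that point. The remaining ingredients --- $(3)\Leftrightarrow(2)$ via Proposition~\ref{prop:ses} and the monotonicity of the slope under counterclockwise pivots --- are correct and agree with the paper, but the argument needs the homological step (or an explicit mesh-relation argument) to close the loop.
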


\begin{proof}
(1)$\ssi$(2). Applying the functor $\Hom(M_1,-)$ to the short exact sequence yields an exact sequence 
\begin{equation}\label{eq 7}0\to\kb=\Hom(M_1,M_1)\to\Hom(M_1,E)\to\Hom(M_1,M_2)\to \Ext(M_1,M_1)=0,\end{equation}
where the first term is one-dimensional and the last term is zero, because $M_1$ is an indecomposable representation of a Dynkin quiver. 
Since $M_1$ and $M_2 $ are indecomposable and the short exact sequence  is non-split, there is a nonzero morphism from $M_1$ to every indecomposable summand of $E$. Thus  the dimension of $\Hom(M_1,E)$ is at least equal to  the number of indecomposable summands of~$E$. However, $Q$ being of type $\mathbb{A}$ implies that the dimension of $\Hom$ between indecomposables is at most one, and thus the dimension of $\Hom(M_1,E)$ is \emph{exactly} equal to  the number of indecomposable summands of~$E$. Thus the exactness of (\ref{eq 7}) implies that  $E$ is decomposable if and only if $\Hom(M_1,M_2)\ne 0$.

(1)\&(2)$\Rightarrow$(3). Suppose there is a nonzero morphism from $M_1$ to $M_2$. 
By Theorem~\ref{thm 1}, this means that we can get from $\gamma_1$ to $\gamma_2$ by a sequence of pivots, each of which is moving one of the endpoints of a diagonal to its counterclockwise neighbor (in a way that preserves the ordering of the endpoints).
Each such pivot increases the slope of a diagonal, thus the slope of $\zg_2$ is larger than the slope of $\zg_1$.
Furthermore $\zg_1$ and $\zg_2$ must cross, because otherwise the extension $E$ would be indecomposable.

(3)$\Rightarrow$(1). Suppose $\zg_1$ and $\zg_2$ cross and that
$\gamma_2$ has larger slope than $\zg_1$. Then 
it is possible to get from $\gamma_1$ to $\gamma_2$
by 
a sequence of counterclockwise pivots which preserve the ordering of the endpoints. By Theorem~\ref{thm 1}, there is a  nonzero morphism from $M_1$ to $M_2$.
\end{proof}

\subsection{The $\eta$ map}
 Recall the map  $\eta_Q$ from the symmetric group $S_{n+1}$ to the set of triangulations of $P(Q)$ from Definition \ref{def etaR}. 
 We define a new realization of this map via the  composition of $\eta_Q$ with the equivalence $F$ of Theorem~\ref{thm 1}.
\begin{definition}
 Let $\eta_Q^{\rep}=F\circ \eta_Q\colon S_{n+1}\to \mar.$\end{definition}
 
We shall give an alternate, equivalent description of $\eta_Q^\rep$ using extensions and  degenerations. %
A representation $M$ is an \emph{extension} of a representation $N$ if there exists a non-split short exact sequence
$0\to N_1\to M\to N_2\to 0$ such that $N=N_1\oplus N_2$. 
A representation $M$ is a \emph{degeneration} of a representation $N$ if $M$ is contained  in the Zariski closure of the isomorphism class of $N$. In Dynkin type $\mathbb{A}$, 
a representation $M$ is a degeneration of a representation $N$ 
if there exists a non-split short exact sequence $0\to M_1\to N\to M_2\to 0$ with $M=M_1\oplus M_2$. 
Note that in both cases, the dimension vectors of $M$ and $N$ are equal. 

Recall that the map $\eta_Q$ is defined by taking the union of a list of paths. 
This list is created by a recursive process of removing and adding a vertex of $P(Q)$ to a path. 
Instead of a list of paths, we will now create a list of representations of $Q$ each of which has dimension vector  $(1,1,\dots,1)$. 
Instead of removing (respectively adding) a vertex, we will now apply an extension (respectively a degeneration) to a representation. Because every representation on this list has dimension vector $(1,1,\dots,1)$, an extension is the same as changing the assignment of the linear map on an arrow in $Q$ from the 
zero map to the identity map. Conversely, a degeneration is simply changing the assignment of the linear map on an arrow in $Q$ from the 
 identity map to the zero map. 
 Finally, instead of taking the union of the list of paths, we take the direct sum of the union of all indecomposable summands of our list of representations.

Let $\alpha_\ell$ denote the arrow of $Q$ between vertices $\ell$ and $(\ell+1)$ for all $\ell$.
Given a permutation $\pi=\pi_1 \, \pi_2 \, \pi_3 \, \dots \, \pi_{n+1}$ written in one-line notation, 
we define a representation $\lambda_i^\rep(\pi)$ of $Q$ for each $i\in \{0,\dots,n+1\}$. 
Let $\lambda_0^\rep(\pi)$ be the 
 representation with dimension $(1,1,\dots,1)$ with 
the $0$ map on each arrow $\alpha_i$ for all    $\underline{i}\in\underline{[n+1]}$, and the identity map everywhere else. 
That is, it is the direct sum of the indecomposable representations which correspond to the lower boundary edges of $P(Q)$ (or maximal increasing paths in $Q$, due to Lemma~\ref{lemma:maximal_paths_to_boundary_edges}). These indecomposable representations are the ones in the $\tau$-orbit of the projective at $1$. In the Auslander--Reiten quiver example of  Figure~\ref{fig:ar_quiver}, they are drawn at the bottom row. 

Define $\lambda^{\rep}_1(\pi)$ to be 
\begin{itemize}
\item[(a)] the degeneration of $\lambda^{\rep}_0(\pi)$ obtained by
replacing the identity map on $\alpha_{\pi_1}$ with the zero map, if $\pi_1\in\overline{[n+1]}$, or
\item[(b)] the extension of $\lambda^{\rep}_0(\pi)$ obtained by
replacing the zero map on $\alpha_{\pi_1}$ with the identity map, if $\pi_1\in\underline{[n+1]}$. 
\end{itemize}

Repeat this process recursively, until we get to $\lambda_{n+1}^{\rep}(\pi)$ which is the  representation with dimension vector $(1,1,\dots,1)$ with 
the $0$ map on all arrows $\alpha_i$ with    $\overline{i}\in\overline{[n+1]}$, and the identity map everywhere else. That is, $\lambda_{n+1}^{\rep}(\pi)$ is the direct sum of the indecomposable representations which correspond to the upper boundary edges of $P(Q)$ (or maximal decreasing paths in $Q$, due to Lemma~\ref{lemma:maximal_paths_to_boundary_edges}). These indecomposable representations are the ones in the $\tau$-orbit of the projective at $n+2$. In the Auslander--Reiten quiver example of  Figure~\ref{fig:ar_quiver}, they are drawn at the top row. 

\begin{prop}
 $\eta^{rep}_Q(\pi)$ is the maximal almost rigid 
representation of $Q$ whose indecomposable direct summands are exactly those appearing in 
the list $\lambda^{\rep}_0(\pi), \dots, \lambda^{\rep}_{n+1}(\pi)$. 
\end{prop}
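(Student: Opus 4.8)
The plan is to run the two recursive constructions in parallel — the one producing the paths $\lambda_0(\pi),\dots,\lambda_{n+1}(\pi)$ in $P(Q)$ and the one producing the representations $\lambda^{\rep}_0(\pi),\dots,\lambda^{\rep}_{n+1}(\pi)$ — and to show, step by step, that $\lambda^{\rep}_k(\pi)$ is exactly the direct sum of the indecomposables $F(\gamma)$ as $\gamma$ ranges over the line segments making up the path $\lambda_k(\pi)$. Granting this, the proposition is immediate: by Definition~\ref{def etaR} the triangulation $\eta_Q(\pi)$ is the union of the paths $\lambda_0(\pi),\dots,\lambda_{n+1}(\pi)$, so the indecomposable summands occurring in the list $\lambda^{\rep}_0(\pi),\dots,\lambda^{\rep}_{n+1}(\pi)$ are precisely the $F(\gamma)$ with $\gamma\in\eta_Q(\pi)$; and since $F$ is injective on line segments and $\eta_Q(\pi)$ is a triangulation, these are exactly the summands of the basic maximal almost rigid representation $F(\eta_Q(\pi))=\eta^{\rep}_Q(\pi)$ (Theorem~\ref{thm 2}).

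To make this precise I would first record a dictionary. For a subset $B\subseteq[n+1]$, let $\rho_B$ be the representation of $Q$ with dimension vector $(1,1,\dots,1)$ whose structure map on $\alpha_b$ is the zero map for each $b\in B$ and the identity map on all other arrows. Writing $0=b_0<b_1<\dots<b_m=n+2$ for the elements of $B\cup\{0,n+2\}$ in increasing order, the zero maps cut $Q$ into the intervals $[b_{\ell-1}+1,b_\ell]$, so $\rho_B=\bigoplus_{\ell=1}^{m}M(b_{\ell-1}+1,b_\ell)$. On the geometric side, the numerically monotone path from $0$ to $n+2$ in $P(Q)$ whose set of interior vertices is $B$ consists of exactly the line segments $\gamma(b_0,b_1),\dots,\gamma(b_{m-1},b_m)$, and $F(\gamma(b_{\ell-1},b_\ell))=M(b_{\ell-1}+1,b_\ell)$ by the definition of $F$. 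Hence such a path corresponds under $F$ (extended additively to sets of line segments as in Theorem~\ref{thm 2}) to the representation $\rho_B$.

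Next I would prove by induction on $k$ that $\lambda^{\rep}_k(\pi)=\rho_{B_k}$, where $B_k$ denotes the set of interior vertices of $\lambda_k(\pi)$; explicitly $B_0=\underline{[n+1]}$ and, since $\pi$ is a permutation, $B_k=\bigl(\underline{[n+1]}\setminus\{\pi_1,\dots,\pi_k\}\bigr)\cup\bigl(\{\pi_1,\dots,\pi_k\}\cap\overline{[n+1]}\bigr)$. The case $k=0$ is the definition of $\lambda^{\rep}_0(\pi)$. For the inductive step, observe first that $\pi_k\notin\{\pi_1,\dots,\pi_{k-1}\}$, so $\pi_k\in B_{k-1}$ precisely when $\pi_k\in\underline{[n+1]}$. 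If $\pi_k\in\overline{[n+1]}$, then $\alpha_{\pi_k}$ carries the identity map in $\rho_{B_{k-1}}$, so replacing it by the zero map (the degeneration defining $\lambda^{\rep}_k(\pi)$) produces $\rho_{B_{k-1}\cup\{\pi_k\}}=\rho_{B_k}$; on the geometric side this amounts to splitting the segment $\gamma(a,a')$ of $\lambda_{k-1}(\pi)$ over $\pi_k$, where $a<\pi_k<a'$ are the neighbours of $\pi_k$ in $B_{k-1}\cup\{0,n+2\}$, into $\gamma(a,\pi_k)$ and $\gamma(\pi_k,a')$, that is, inserting the vertex $\pi_k$ in the path exactly as in Definition~\ref{def etaR}, which on the level of $F$ is the splitting of $M(a+1,a')$ as $M(a+1,\pi_k)\oplus M(\pi_k+1,a')$. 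If $\pi_k\in\underline{[n+1]}$, the symmetric argument applies: $\alpha_{\pi_k}$ carries the zero map in $\rho_{B_{k-1}}$, the extension defining $\lambda^{\rep}_k(\pi)$ turns it into the identity, giving $\rho_{B_{k-1}\setminus\{\pi_k\}}=\rho_{B_k}$, and geometrically this merges the two segments of $\lambda_{k-1}(\pi)$ adjacent to $\pi_k$ into one, that is, deletes $\pi_k$. This closes the induction, and the indecomposable summands of $\lambda^{\rep}_k(\pi)=\rho_{B_k}$ are exactly the $F(\gamma)$ for $\gamma$ a segment of $\lambda_k(\pi)$.

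The argument is in the end bookkeeping, and I expect the only delicate point to be the inductive step: one must verify that the elementary operation ``toggle the structure map on $\alpha_{\pi_k}$'' is admissible (the arrow really is in the identity, respectively zero, state beforehand) and that this toggle matches the insertion, respectively deletion, of the single vertex $\pi_k$ in the path. Both of these are ensured by $\pi$ being a permutation, so that $\pi_k$ has not been processed at an earlier step, together with the explicit description — given just before the proposition — of extensions and degenerations of representations of dimension vector $(1,\dots,1)$ as toggling a single structure map.
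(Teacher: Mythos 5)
Your proposal is correct and follows essentially the same route as the paper's proof: an induction on $k$ showing that $\lambda^{\rep}_k(\pi)$ is the image under $F$ of the set of line segments making up $\lambda_k(\pi)$, with the inductive step matching the insertion/deletion of the vertex $\pi_k$ in the path against the splitting/merging of summands caused by toggling the structure map on $\alpha_{\pi_k}$. Your explicit dictionary $\rho_{B_k}$ and the observation that $\pi$ being a permutation guarantees the toggle is admissible make the bookkeeping slightly more explicit than the paper's version, but the argument is the same.
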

\begin{proof}
 We need to check that the recursive definition of $\zl_\ell^\rep$ corresponds to the recursive definition of $\zl_\ell$ under the functor $F$. 
 For $\ell=0$, this is clear. Suppose $\ell>0$ and  that $F(\zl_\ell)=\zl_\ell^\rep$. We want to show $F(\zl_{\ell+1})=\zl_{\ell+1}^\rep$.  
 Suppose first that $\pi_{\ell+1}\in\overline{[n+1]}$.
 Then the difference between the paths $\zl_\ell$ and $\lambda_{\ell+1}$ is that $\zl_{\ell+1}$ passes through the vertex $\pi_{\ell+1} $ of $P(Q)$. 
 This means that $\zl_{\ell} $ contains a line segment $\zg(i,j)$ which is replaced in $\zl_{\ell+1}$ by two line segments $\zg(i,\pi_{\ell+1}),\zg(\pi_{\ell+1},j)$, where $i<\pi_{\ell+1}<j$. 
 Applying the functor $F$, the indecomposable direct summand  $M(i+1,j)$  is replaced by two indecomposable direct summands $M(i+1,\pi_{\ell+1}),M(\pi_{\ell+1}+1,j)$.
  This change coincides with the difference between $\zl^{\rep}_{\ell}$ and $\zl^{\rep}_{\ell+1}$ which is given by a degeneration that replaces the identity map on the arrow $\za_{\pi_{\ell+1}}$ by the zero map.
  
  Now suppose that  $\pi_{\ell+1}\in\underline{[n+1]}$.
 Then the difference between the paths $\zl_\ell$ and $\lambda_{\ell+1}$ is that~$\zl_{\ell}$ passes through the vertex $\pi_{\ell+1} $ of $P(Q)$. This means that $\zl_{\ell} $ contains two line segments $\zg(i,\pi_{\ell+1}),\zg(\pi_{\ell+1},j)$, with $i<\pi_{\ell+1}<j$, which are replaced in $\zl_{\ell+1}$ by a single line segment $\zg(i,j)$. Applying the functor $F$, two indecomposable direct summands $M(i+1,\pi_{\ell+1}),M(\pi_{\ell+1}+1,j)$ are replaced by a single indecomposable direct summand  $M(i+1,j)$ .
  This change coincides with the difference between $\zl^{\rep}_{\ell}$ and $\zl^{\rep}_{\ell+1}$ which is given by an extension that replaces the zero map on the arrow $\za_{\pi_{\ell+1}}$ by the identity map.
\end{proof}

\begin{example}
\label{exam:eta_rep}
Let $Q$ be the quiver in Figure~\ref{fig:polygon6}. Then $\overline{[6]}=\{1,2,3,5\}$ and $\underline{[6]}=\{4,6\}$. 
Let $\pi=
4 \, 5 \,3 \, 1 \, 2 \, 6
\in S_{6}$.  The paths $\zl_i(\pi)$ are listed in Example~\ref{exam:eta}.
The list of the representations $\zl_i^\rep(\pi)$ is as follows; see also Figure \ref{fig:booleanalgebra}.
\[\begin{array}
 {rcl} 
 \zl_0^\rep&=&M(1,4)\oplus M(5,6)\oplus S(7)\\
 \zl_1^\rep&=&M(1,6)\oplus S(7)\\
 \zl_2^\rep&=&M(1,5)\oplus S(6)\oplus S(7)\\\zl_3^\rep&=&M(1,3)\oplus M(4,5)\oplus S(6)\oplus S(7)\\
 \zl_4^\rep&=&S(1)\oplus M(2,3)\oplus M(4,5)\oplus S(6)\oplus S(7)\\
  \zl_5^\rep&=&S(1)\oplus S(2)\oplus S(3)\oplus M(4,5)\oplus S(6)\oplus S(7)\\
  \zl_6^\rep&=&S(1)\oplus S(2)\oplus S(3)\oplus M(4,5)\oplus M(6,7)
\end{array}
\]
As we have seen earlier the fiber of $\eta_Q(453126)$ is the set 
    $ \left\{ 
453126, 
453162, 
453612, 
456312  
    \right\}.$
    These four permutations correspond to the four maximal chains in Figure~\ref{fig:booleanalgebra}.
\end{example}

\begin{figure}[h!t]
\def\xfigscale{2.25}
\def\yfigscale{1.5}
\begin{tikzpicture}[xscale=\xfigscale,yscale=\yfigscale]
\def\extensionedge{densely dotted}
\def\degenerationedge{thick}
\def\extensioncolor{red}
\def\degenerationcolor{blue}

\node[](46) at (0,0) {$\begin{smallmatrix}1\\2\\3\\4\end{smallmatrix}\oplus \begin{smallmatrix}5\\6\end{smallmatrix} \oplus 7$};

\node[](6) at (0,1) {$\begin{smallmatrix}1\\2\\&3&&5 \\&&4&&6 \end{smallmatrix} \oplus 7$};

\node (56) at (0,2) {$\begin{smallmatrix}1\\2\\&3&&5 \\&&4 \end{smallmatrix} \oplus 6 \oplus 7$};

\node (356) at (-1,3) {$\begin{smallmatrix}1\\2\\3\end{smallmatrix}
\oplus 
\begin{smallmatrix}5\\4 \end{smallmatrix} 
\oplus 6 \oplus 7$};
\node (5) at (1,3) {$\begin{smallmatrix}1\\2\\&3&&5 \\&&4\end{smallmatrix} 
\oplus 
\begin{smallmatrix}7\\6 \end{smallmatrix}$};

\node (1356) at (-2,4) {$1\oplus \begin{smallmatrix}2\\3 \end{smallmatrix} \oplus \begin{smallmatrix}5\\4 \end{smallmatrix} \oplus 6 \oplus 7$};
\node (35) at (0,4) {$
\begin{smallmatrix}1\\2\\3 \end{smallmatrix} 
\oplus 
\begin{smallmatrix}5\\4 \end{smallmatrix} 
\oplus 
\begin{smallmatrix}7\\6 \end{smallmatrix}$};

\node (12356) at (-3,5) {$1\oplus 2 \oplus 3 \oplus \begin{smallmatrix}5\\4 \end{smallmatrix} \oplus 6 \oplus 7$};
\node (135) at (-1,5) {$1\oplus 
\begin{smallmatrix}2\\3 \end{smallmatrix} 
\oplus 
\begin{smallmatrix}5\\4 \end{smallmatrix} 
\oplus 
\begin{smallmatrix}7\\6 \end{smallmatrix}$};

\node (1235) at (-2,6) {$1\oplus 2 \oplus 3 \oplus \begin{smallmatrix}5\\4 \end{smallmatrix} \oplus \begin{smallmatrix}7\\6 \end{smallmatrix}$};

\draw[\extensioncolor, \extensionedge, ->](46)
 -- (6) node[\extensioncolor,pos=0.5,right,right] {$ext(\underline{4})$};
 
 \draw[\degenerationcolor, \degenerationedge,->](6)
 -- (56) node[\degenerationcolor,pos=0.5,right,right] {$deg(\overline{5})$};
 
  \draw[\degenerationcolor,\degenerationedge, ->](56)
 -- (356) node[\degenerationcolor,pos=0.45,right,left] {$deg(\overline{3})$};
  \draw[\extensioncolor,\extensionedge,  ->](56)
 -- (5) node[\extensioncolor,  pos=0.45,right,right] {$ext(\underline{6})$};
 
  \draw[\degenerationcolor, \degenerationedge, ->](356)
 -- (1356) node[\degenerationcolor,pos=0.45,right,left] {$deg(\overline{1})$};
   \draw[\extensioncolor, \extensionedge, ->](356)
 -- (35) node[\extensioncolor,pos=0.40,right,right] {$ext(\underline{6})$};
\draw[\degenerationcolor, \degenerationedge, ->](5)
 -- (35) node[\degenerationcolor,pos=0.5,right,right] {$deg(\overline{3})$}; 
 
  \draw[\degenerationcolor, \degenerationedge, ->](1356)
 -- (12356) node[\degenerationcolor,pos=0.45,right,left] {$deg(\overline{2})$}; 
   \draw[\extensioncolor, \extensionedge, ->](1356)
 -- (135) node[\extensioncolor,pos=0.5,right,right] {$ext(\underline{6})$}; 
 \draw[\degenerationcolor, \degenerationedge, ->](35)
 -- (135) node[\degenerationcolor,pos=0.5,right,right] {$deg(\overline{1})$}; 
 
   \draw[\extensioncolor, \extensionedge, ->](12356)
 -- (1235) node[\extensioncolor,pos=0.55,right,left] {$ext(\underline{6})$}; 
    \draw[\degenerationcolor, \degenerationedge, ->](135)
 -- (1235) node[\degenerationcolor,pos=0.5,right,right] {$deg(\overline{2})$}; 
\end{tikzpicture}
\caption{The four maximal chains correspond to the four permutations in the fiber of $\eta_Q^\rep(\pi)$ given in Example~\ref{exam:eta_rep}. The left-most maximal chain corresponds to the permutation $453126$.}
\label{fig:booleanalgebra}
\end{figure}
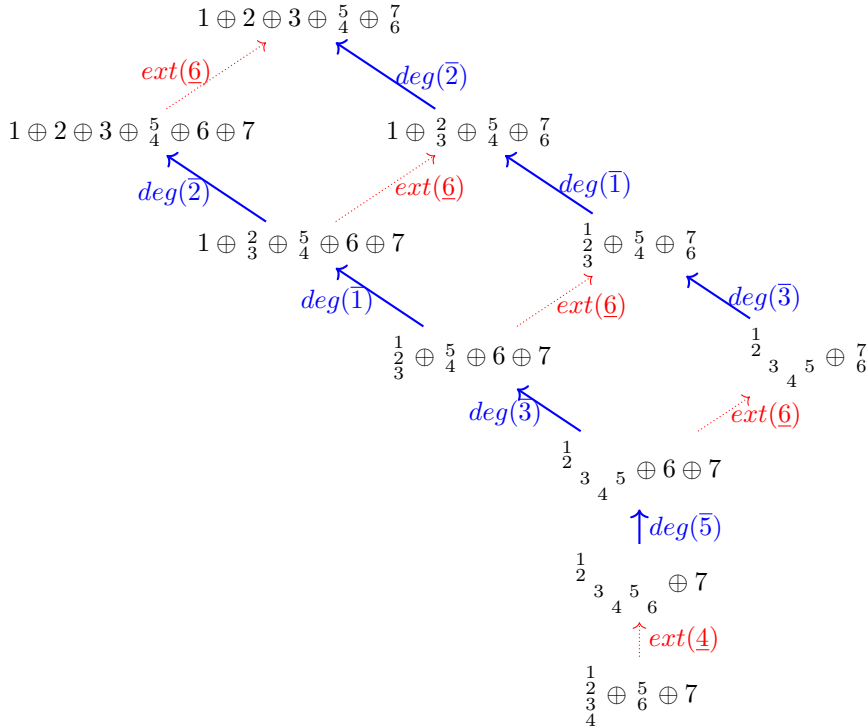

\subsection*{Acknowledgements} 
We thank Bernhard Keller for suggesting an outline for proving Theorem~\ref{thm A}. 
We also thank Nathan Reading and Hugh Thomas
for answering our questions about Cambrian lattices,
and Ana Garcia Elsener, Kaveh Mousavand, Pierre-Guy Plamondon, and Gordana Todorov for helpful suggestions. 
Special thanks  go to Alastair King for stimulating discussions on stability. 
Finally, we thank the anonymous reviewer whose suggestions helped improve and clarify this paper.

\end{document}